\newtheoremstyle{alexthm}
  {.8cm}
  {.8cm}
  {\sl }
  {}
  {\bf}
  {.}
  {.5em}
  {}
\theoremstyle{alexthm}
\newtheorem{theorem}{Theorem}[section]
\newtheorem{corollary}[theorem]{Corollary}
\newtheorem{proposition}[theorem]{Proposition}
\newtheorem{lemma}[theorem]{Lemma}
\newtheorem{definition}[theorem]{Definition}
\newtheoremstyle{alexdef}
  {.8cm}
  {.8cm}
  {\rm }
  {}
  {\bf}
  {.}
  {.5em}
  {}
\theoremstyle{alexdef}
\newtheorem{remark}[theorem]{Remark}
\newcommand{\ab}{\mathit{ab}}
 \newcommand{\Spec}{\mathit{Spec}}
 \newcommand{\Gal}{\operatorname{Gal}}
 \newcommand{\Mor}{\operatorname{Mor}}
 \newcommand{\Aut}{\operatorname{Aut}}
 \newcommand{\Pic}{\operatorname{Pic}}
 \newcommand{\Hom}{\operatorname{Hom}}
 \newcommand{\et}{\mathrm{et}}
 \newcommand{\Nis}{\mathrm{Nis}}
 \newcommand{\A}{{\mathds  A}}
 \newcommand{\cA}{\mathcal{A}}
  \renewcommand{\P}{{\mathds  P}}
 \newcommand{\Q}{{\mathds  Q}}
 \newcommand{\Z}{{\mathds  Z}}
\newcommand{\T}{\mathcal{T}}
\newcommand{\Tor}{\,\mathcal{PHS}}
\newcommand{\Cor}{\operatorname{Cor}}
\newcommand{\lang}{\longrightarrow}
\newcommand{\ch}{\mathrm{char}}
\newcommand{\Sch}{\mathrm{Sch}}
\newcommand{\Sm}{\mathrm{Sm}}
\newcommand{\Sym}{\operatorname{Sym}}
\newcommand{\qfh}{\mathrm{qfh}}
\newcommand{\Ext}{\operatorname{Ext}}
\newcommand{\sing}{\mathrm{sing}}
\newcommand{\rec}{\mathit{rec}}
\newcommand{\U}{\mathcal{U}}
\newcommand{\V}{\mathcal{V}}
\newcommand{\id}{\mathrm{id}}
\newcommand{\ds}{\displaystyle}
\newcommand{\DM}{{\mathrm{DM}^{\mathrm{eff,-}}_\Nis(k,R)}}
\newcommand{\para}{\mathit{par}}
\newcommand{\taut}{\mathit{taut}}
\let\@fnsymbol\@arabic
\title{\bf Tame Class Field Theory for Singular Varieties over Algebraically Closed Fields}
\author{by Thomas Geisser\footnote{Supported by JSPS Grant-in-Aid (B) 23340004}\;  and Alexander Schmidt\footnote{Supported by DFG-Forschergruppe FOR 1920}}
\begin{document}
\maketitle
\section{Introduction}
Let $X$ be a (possibly singular) separated scheme of finite type over an algebraically closed field $k$ of characteristic $p\ge 0$ and let $m$ be a natural number.
We construct a pairing between the first mod~$m$ algebraic singular homology
$H_1^S(X,\Z/m\Z)$ and the first  mod~$m$ tame \'{e}tale cohomology group
$H^1_{t}(X,\Z/m\Z)$. For  $\pi_1^{t,\ab}(X)=H^1_{t}(X,\Q/\Z)^\vee$ we prove the
following analogue of Hurewicz's theorem
in algebraic topology:

\begin{theorem}\label{main}
The induced homomorphism
\[
\rec_X: H_1^S(X,\Z/m\Z) \longrightarrow \pi_1^{t,\ab}(X)/m
\]
is surjective. It is an isomorphism of finite abelian groups if\/ $(m,p)=1$, and for general $m$ if resolution of singularities holds for schemes of dimension $\leq \dim X + 1$ over~$k$.
\end{theorem}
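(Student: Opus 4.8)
The plan is to reduce the theorem to a perfect duality between mod-$m$ Suslin homology and mod-$m$ tame cohomology. First I would record the purely formal identification $\pi_1^{t,\ab}(X)/m \cong H^1_{t}(X,\Z/m\Z)^*$: applying Pontryagin duality to $0 \to \Z/m\Z \to \Q/\Z \xrightarrow{m} \Q/\Z \to 0$ gives $H^1_{t}(X,\Z/m\Z) \cong H^1_{t}(X,\Q/\Z)[m]$, and dualising the multiplication-by-$m$ sequence on $H^1_{t}(X,\Q/\Z)$ yields $H^1_{t}(X,\Q/\Z)^*/m \cong (H^1_{t}(X,\Q/\Z)[m])^*$. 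Under this identification $\rec_X$ becomes the homomorphism $H_1^S(X,\Z/m\Z) \to H^1_{t}(X,\Z/m\Z)^*$ adjoint to the defining pairing, so the theorem is equivalent to saying that this pairing is left-nondegenerate in all cases (surjectivity) and perfect under the stated hypotheses (isomorphism), together with finiteness of both groups.

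For $(m,p)=1$ every mod-$m$ cover is automatically tame, so $H^1_{t}(X,\Z/m\Z) = H^1_{\et}(X,\Z/m\Z)$ and the tame/full distinction disappears. Here I would invoke the Suslin--Voevodsky comparison theorem, which identifies $H_1^S(X,\Z/m\Z)$ with the \'etale homology $H_1^{\et}(X,\Z/m\Z)$, i.e. with a cohomology group of the Verdier dualizing complex on a compactification. The required perfectness then becomes Poincar\'e--Lefschetz (Verdier) duality for $\Z/m\Z$-coefficients, which holds unconditionally when $(m,p)=1$; on a smooth $X$ of dimension $d$ it is the pairing $H^1_{\et}(X,\Z/m\Z) \times H^{2d-1}_c(X,\mu_m^{\otimes d}) \to \Z/m\Z$, and for singular $X$ one works with the dualizing complex throughout. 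This settles the isomorphism, and a fortiori the surjectivity, in the prime-to-$p$ case.

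To obtain surjectivity for arbitrary $m$ I would argue by reduction to curves. Dually, surjectivity of $\rec_X$ is the injectivity of $H^1_{t}(X,\Z/m\Z) \to H_1^S(X,\Z/m\Z)^*$, i.e. the assertion that every nonzero tame class is detected by a $1$-cycle. Using a Bertini/Lefschetz theorem for the tame fundamental group, I would produce a curve $C$ with a morphism $C \to X$ for which $\pi_1^{t,\ab}(C)/m \to \pi_1^{t,\ab}(X)/m$ is surjective, equivalently $H^1_{t}(X,\Z/m\Z) \hookrightarrow H^1_{t}(C,\Z/m\Z)$. The statement for curves is classical tame (geometric) class field theory, where $\rec_C$ is an isomorphism; functoriality of $\rec$ and the resulting commutative square between $C$ and $X$ then force $\rec_X$ to be surjective. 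This argument uses neither resolution nor any restriction on $m$.

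The remaining and hardest point is injectivity for the $p$-part, where the Suslin--Voevodsky rigidity method is unavailable. A tame $p$-power cover is unramified along the boundary, so the $p$-part of $\pi_1^{t,\ab}(X)$ is governed by a smooth proper model, and the natural duality is the one between logarithmic de Rham--Witt cohomology and a suitable homology theory in the style of Milne and Gros. The role of resolution of singularities in dimension $\le \dim X + 1$ is precisely to replace $X$ by, and compare it with, smooth proper data (a compactification with normal-crossing boundary together with a hyperenvelope / cdh-descent argument) on which this $p$-adic duality is known. I expect the main obstacle to lie exactly here: matching mod-$p^r$ Suslin homology with logarithmic Hodge--Witt cohomology and controlling the contribution of the boundary and of the singularities is what forces the resolution hypothesis. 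Granting this comparison, the pairing is perfect $p$-adically as well; combined with the prime-to-$p$ case and the unconditional surjectivity this gives the full statement, while finiteness of the groups follows from the topological finite generation of the tame fundamental group.
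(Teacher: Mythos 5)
The most concrete gap is in your surjectivity argument. The Bertini/Lefschetz step is supposed to produce a regular curve $C$ and a morphism $C\to X$ with $H^1_t(X,\Z/m\Z)\hookrightarrow H^1_t(C,\Z/m\Z)$, but such a curve exists only for normal (generically smooth) $X$. For a nodal cubic $X$, every non-constant morphism from a regular curve to $X$ factors through the normalization $\P^1\to X$ by the universal property of normalization, so the class of the ``loop'' in $H^1_\et(X,\Z/m\Z)=H^1_t(X,\Z/m\Z)$ dies on every regular curve mapping to $X$; it is nevertheless detected by the pairing, because $1$-cycles in the Suslin complex are finite \emph{correspondences}, not morphisms. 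The paper therefore first reduces to normal $X$ by induction on dimension using the abstract blow-up square of the normalization: this requires the compatible six-term exact sequences for tame cohomology and for Suslin homology (Propositions~\ref{exact-etale} and~\ref{exact-suslin}), the compatibility of their boundary maps with the pairings (Proposition~\ref{compare_ex_seq}), and the fact that $\rec_{0,X}$ is an isomorphism (Lemma~\ref{rec0iso}, resting on divisibility of the degree-zero part of $H_0^S$). None of this d\'evissage appears in your proposal, and without it the surjectivity claim fails for non-normal $X$.

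The second issue is how you pass from duality theorems to perfectness of \emph{this} pairing. That $H_1^S(X,\Z/m\Z)$ and $H^1_t(X,\Z/m\Z)$ are abstractly dual (Suslin--Voevodsky for $(m,p)=1$; log de Rham--Witt duality on a smooth compactification for the $p$-part) does not show that the torsor-pullback pairing realizes that duality --- identifying $\rec_X$ with the Suslin--Voevodsky map is a separate, nontrivial comparison (Section~\ref{appendix}) and is not needed for the main theorem. The paper instead counts: once surjectivity is known, it suffices that source and target are finite of the same order, which is exactly what the cited results give; you should restructure both isomorphism steps this way rather than claiming perfectness ``a fortiori''. For the $p$-part your sketch also leaves the two decisive inputs unproved: that $H^1_t(X,\Z/p^r\Z)\cong H^1_\et(\bar X,\Z/p^r\Z)$ for a smooth compactification $\bar X$ (numerical tameness, Proposition~\ref{numerical}), and that $H_1^S(X,\Z/p^r\Z)\to H_1^S(\bar X,\Z/p^r\Z)$ is an isomorphism --- the latter (Proposition~\ref{suslin-mod p}) is where resolution in dimension $\le\dim X+1$ actually enters, via Gysin triangles and the vanishing of $\Z/p^r\Z(s)$ in weights above the dimension. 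Finally, the curve case is not a bare citation to classical class field theory: one must verify that the constructed pairing agrees with evaluation on $\null_m\cA(k)$ for the generalized Jacobian, which is Theorem~\ref{vergleichepaarung} and is the heart of the argument.
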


For $p\nmid m$,  the groups $H_1^S(X,\Z/m\Z)$ and $\pi_1^{t,\ab}(X)/m$ are known to be isomorphic by the work of Suslin and Voevodsky \cite{SV}. Theorem~\ref{main} above provides an explicit isomorphism which  extends to the case $p \mid m$ (under resolution of singularities). Moreover, in the last section we show that for $p\nmid m$ our isomorphism coincides with the one constructed in \cite{SV}.

\medskip
The motivation for constructing our pairing between the groups $H_1^S(X,\Z/m\Z)$ and $H^1_{t}(X,\Z/m\Z)$ comes from topology:
For  a locally contractible Hausdorff space $X$ and a natural number $m$, the canonical duality pairing
\[
\langle \cdot , \cdot \rangle: H_1^\sing (X,\Z/m\Z) \times H^1(X,\Z/m\Z) \longrightarrow \Z/m\Z,
\]
between singular homology and sheaf cohomology with mod $m$ coefficients can be given explicitly in the following way:
represent $b\in H^1(X,\Z/m\Z)$  by  a $\Z/m\Z$-torsor  $\mathcal{T}\to X$ and  $a\in H_1^\sing(X,\Z/m\Z)$ by a $1$-cycle $\alpha$ in the singular complex of $X$. Then
\[
\langle a, b\rangle = \Phi_{\para}^{-1} \circ \Phi_{\taut} \in \Z/m\Z,\ \text{ where }
\Phi_{\taut}, \Phi_{\para}\colon \alpha^*(\mathcal{T})|_0 \stackrel{\sim}{\to} \alpha^*(\mathcal{T})|_1
\]
are the isomorphisms between the fibres over $0$ and $1$ of the pull-back torsor $\alpha^*(\mathcal{T})\to \Delta^1=[0,1]$ given tautologically ($0^* \alpha=1^* \alpha$) and by parallel transport (every $\Z/m\Z$-torsor on $[0,1]$ is trivial).

For a variety $X$, the pairing between  $H_1^S(X,\Z/m\Z)$  and $H^1_t(X,\Z/m\Z)$ inducing the homomorphism $\rec_X$ of  our Main Theorem~\ref{main} will be constructed in the same way. However, $1$-cycles in the algebraic singular complex are not linear combinations of morphisms but finite correspondences from $\Delta^1$ to $X$. In order to mimic the above construction, we thus have to define the pull-back of a torsor along a finite correspondence,
which requires the construction of the push-forward torsor along a finite surjective morphism.

To prove Theorem~\ref{main}, we first consider the case of a
smooth curve $C$. If $\cA$ is the Albanese variety of $C$, then we have isomorphisms
\begin{equation} \label{neq1}
H_1^S(C,\Z/m\Z) \xrightarrow[\sim]{\;\delta \;} \null_m H_0^S(C,\Z) \cong \null_m \cA(k).
\end{equation}
The first isomorphism follows from the coefficient sequence together with the
divisibility of $H_1^S(C,\Z)$, and the second from the Abel-Jacobi theorem.
On the other hand,
\begin{equation} \label{neq2}
\Hom(\null_mA(k),\Z/m\Z) \xrightarrow[\sim]{\;\tau\;} H^1_t(C,\Z/m\Z).
\end{equation}
This follows because the maximal \'{e}tale subcovering $\widetilde \cA\to \cA$
of the $m$-mul\-ti\-pli\-cation map $\cA\stackrel{m}{\to} \cA$ is the quotient of $\cA$ by the connected component of the finite group scheme $\null_m \cA$, and
the maximal abelian tame \'{e}tale covering of $C$ with Galois group annihilated by $m$
is $\widetilde C := C\times_{\cA} \widetilde \cA$.
The heart of the proof of Theorem~\ref{main} for smooth curves is to show that under the above identifications,
our pairing agrees with the evaluation map.

We then show surjectivity of $\rec_X$ for general $X$ by reducing to the case of smooth curves. Finally, we
use duality theorems to show that both sides of $\rec_X$ have the same order:
For the $p$-primary part, we use resolution of singularities to reduce to the
smooth projective case considered  in \cite{ge-annals}.
For $(m,\ch(k))=1$,\linebreak[3] Suslin and Voevodsky \cite{SV} construct an isomorphism
\[
\alpha_X: H^1_\et(X,\Z/m\Z) \stackrel{\sim}{\to} H^1_S(X,\Z/m\Z).
\]
Hence the source and the target of $\rec_X$ have the same order and therefore $rec_X$ is an isomorphism.
In Section~\ref{appendix} we show that $\rec_X$ is dual to the map $\alpha_X$.
Thus, for $\ch(k)\nmid m$,  our construction gives an explicit description of the Suslin-Voevodsky isomorphism $\alpha_X$, which
zig-zags through Ext-groups in various categories and is difficult to understand.

\bigskip
The authors thank Takeshi Saito and Changlong Zhong for discussions during the early stages of the project.
It is a pleasure to thank Johannes Ansch\"{u}tz whose comments on an earlier version of this paper led to a substantial simplification of the proof of Theorem~\ref{vergleichepaarung}. Finally, we want to thank the referee for his helpful comments.

\section{Torsors and finite correspondences}
\label{torseur}

\medskip
All occurring schemes in this section are separated schemes of finite type over a field $k$.
For  any abelian group $A$ and a finite surjective morphism $\pi: Z \to X$ with $Z$ integral and $X$ normal, connected,  we have transfer maps  \[
\pi_*: H^i_\et(Z,A)\to H^i_\et(X,A)\]  for all $i\geq 0$ (see \cite{MVW}, 6.11, 6.21).  The group $H^1_\et(Z,A)$ classifies isomorphism classes of \'{e}tale $A$-torsors (i.e., principal homogeneous spaces) over the scheme $Z$.
We are going to construct a functor
\[
\pi_*: \Tor(Z,A) \lang \Tor(X,A)
\]
from the category of \'{e}tale $A$-torsors on $Z$ to the category of \'{e}tale $A$-torsors on $X$, which induces the transfer map $\pi_*: H^1_\et(Z,A)\to H^1_\et(X,A)$ above on isomorphism classes.

We recall how to add and subtract torsors.
For an abelian group $A$ and $A$-torsors $\T_1$, $\T_2$  on a scheme $Y$, define
\[
\T_1 + \T_2
\]
to be the quotient scheme of $\T_1 \times_Y \T_2$ by the action of $A$ given by $(t_1,t_2)+a= (t_1+a,t_2 - a)$. It carries the structure of an $A$-torsor by setting
\[
\overline{(t_1,t_2)}+a:=\overline{(t_1+a,t_2)} \quad (=\overline{(t_1,t_2+a)}).
\] The functor
\[
+ : \Tor(Y,A) \times \Tor(Y,A) \lang \Tor(Y,A), \quad (\T_1,\T_2)\longmapsto \T_1 + \T_2,
\]
lifts the addition in $H^1_\et(Y, A)$ to torsors (cf. \cite{Mi}, III, Rem.~4.8\,(b)).
Note that ``$+$'' is associative and commutative up to natural functor isomorphisms. In particular, we can multiply a torsor by any natural number $m$, putting $m\cdot \T= \T + \cdots + \T$ ($m$ times). If $mA=0$, then we have
a natural isomorphism of torsors
\begin{equation}\label{eq1}
m\cdot \T \ \stackrel{\sim}{\lang} \ Y \times A , \quad \overline{(t_1,\ldots,t_m)} \mapsto (t_2-t_1)+ \cdots + (t_m-t_1) \in A,
\end{equation}
where $Y \times A$ is the trivial $A$-torsor on $Y$ representing the constant sheaf $\underline{A}$ over $Y$. Here $t_i-t_j$ denotes the unique element $a\in A$ with $t_i=t_j +a$.

\medskip
Furthermore, given a torsor $\T$,  define $(-\T)$ to be the torsor which is isomorphic to $\T$ as a scheme and  on which $a\in A$ acts as $-a$. This yields a functor
\[
(-1):\  \Tor(Y,A) \lang \Tor(Y,A), \quad \T \longmapsto (-\T),
\]
which lifts multiplication by $(-1)$ from $H^1_\et(Y, A)$ to an endofunctor of $\Tor(Y,A)$. We have a natural isomorphism of torsors
\begin{equation}\label{eq2}
\T + (-\T) \stackrel{\sim}{\lang} Y \times A, \quad \overline{(t_1,t_2)} \mapsto t_1-t_2 \in A.
\end{equation}

\bigskip
Now let $\pi: Z \to X$ be finite and surjective, $Z$ integral, $X$ normal, connected, and let $\T$ be an $A$-torsor on $Z$.
For every point $x\in X$, the base change  $Z \times_X X_x^{sh}$ is a product of  strictly  henselian local schemes. Therefore we find an \'{e}tale cover $(U_i\to X)_{i\in I}$ of $X$ such that $\T$ trivializes over the pull-back \'{e}tale cover $(\pi^{-1}(U_i)\to Z)_{i\in I}$ of $Z$.

Next choose a pseudo-Galois covering $\widetilde \pi: \widetilde Z \to X$ dominating $Z\to X$. Recall that this means that $k(\widetilde Z)|k(X)$ is a normal field extension and that  the natural map $\Aut_{X}(\widetilde Z) \to \Aut_{k(X)}(k(\widetilde Z))$ is bijective (cf.\ \cite{SV}, Lemma 5.6). Let $\pi_{in}: X_{in} \to X$ be the quotient scheme $\widetilde Z /G$, where $G=\Aut_X(\widetilde Z)$. Then $X_{in}$ is the normalization of $X$ in the maximal purely inseparable subextension $k(X)^{in} /k(X)$ of $k(\widetilde Z)/k(X)$. Consider the object
\[
\widetilde \T:= \sum_{\varphi \in \Mor_{X}(\widetilde Z, Z)} \varphi^*(\T) \ \in \ \Tor(\widetilde Z,A),
\]
which is defined up to unique isomorphism. Starting from any trivialization of $\T$ over $(\pi^{-1}(U_i)\to Z)_{i\in I}$, we obtain a trivialization of the restriction of $\widetilde \T$ to $(\widetilde \pi^{-1}(U_i)\to \widetilde Z)_{i\in I}$ of the form
\[
\widetilde \T|_{\widetilde \pi^{-1}(U_i)} \cong \widetilde \pi^{-1}(U_i) \times A,
\]
where $G=\Aut_X(\widetilde Z)$ acts on the right  hand side in the canonical way on $\widetilde \pi^{-1}(U_i)$ and trivially on $A$.
Therefore  the quotient scheme $\widetilde \T/G$ is an $A$-torsor on $\widetilde Z/G=X_{in}$ in a natural way. Since $X_{in} \to X$ is a topological isomorphism, $\widetilde \T/G$  comes by base change from a unique $A$-torsor $\T'$ on $X$.
\begin{definition}
The push-forward $A$-torsor $\pi_*(\T)$ on $X$ is defined by
\[
\pi_*(\T)= [k(Z):k(X)]_{in} \cdot \T'.
\]
\end{definition}
The assignment $\T \mapsto \pi_*(\T)$ defines a functor
\[
\pi_*: \Tor(Z,A)\lang \Tor(X,A).
\]
The functor $\pi_*$ is additive in the sense that it commutes with the functors ``$+$'' and ``$(-1)$'' up to a natural functor isomorphism.

\bigskip
Let $\T \in \Tor(Z,A)$ and assume that there exists a section $s: Z \to \T$ to the projection $\T \to Z$ (so $\T$ is trivial and $s$ gives a trivialization). Let again  $\pi: Z \to X$ be finite and surjective, $Z$ integral, $X$ normal, connected. Then
\[
\widetilde \T:= \sum_{\varphi \in \Mor_{X}(\widetilde Z, Z)} \varphi^*(\T) \ \in \ \Tor(\widetilde Z,A)
\]
has the canonical section $\sum_{\varphi \in \Mor_{X}(\widetilde Z, Z)} \varphi^*(s)$ over $\widetilde Z$. It descends to a section of $\T/G$ over $\widetilde Z /G = X_{in}$. Descending to $X$ and multiplying by $[k(X_{in}):k(X)]$, we obtain a section
\[
\pi_*(s): X \to \pi_*(\T).
\]
In other words, we obtain a map
\[
\pi_*: \Gamma (Z, \T) \lang \Gamma(X,\pi_*(\T));
\]
hence every trivialization of $\T$ gives a trivialization of $\pi_*(\T)$ in a natural way.

\bigskip
In order to see that $\pi_*$ induces the transfer map $\pi_*: H^1_\et(Z,A) \to H^1_\et(X,A)$ after passing to isomorphism classes, we formulate the construction of $\pi_*$ on the level of  \v Cech $1$-cocycles. As explained above, we find an \'{e}tale cover $(U_i\to X)_{i\in I}$ such that $\T$ trivializes over the \'{e}tale cover $(\pi^{-1}(U_i) \to Z)_{i\in I}$ of $Z$. We fix a trivialization and obtain a \v Cech $1$-cocycle
\[
a=(a_{ij}\in \Gamma(\pi^{-1}(U_i\times_XU_j),A))
\]
over  $(\pi^{-1}(U_i)\to Z)_{i\in I}$ which defines $\T$.
As before choose a pseudo-Galois covering $\widetilde \pi: \widetilde Z \to X$ dominating $Z\to X$.  Now for all $i,j$ consider the element
\[
\sum_{\varphi \in \Mor_{X}(\widetilde Z, Z)} \varphi^*(a_{ij}) \in \Gamma(\widetilde \pi^{-1}(U_i \times_X U_j),A)
\]
which, by Galois invariance, lies in
\[
\Gamma(\pi_{in}^{-1}(U_i\times_XU_j), A) = \Gamma(U_i\times_XU_j, A).
\]
The  \v Cech $1$-cocycle given by
\[
[k(Z):k(X)]_{in} \cdot \Big(\sum_{\varphi \in \Mor_{X}(\widetilde Z, Z)} \varphi^*(a_{ij})\Big) \in \Gamma(U_i\times_XU_j, A).
\]
now defines a trivialization of $\pi_*(\T)$ over $(U_i\to X)_{i\in I}$. Since the transfer map on \'{e}tale cohomology is defined on \v Cech cocycles in exactly this way (see \cite{MVW}, 6.11, 6.21), we obtain

\begin{lemma} Passing to isomorphism classes, the functor $\pi_*: \Tor(Z,A) \to \Tor(X,A)$ constructed above induces the transfer homomorphism
\[
\pi_*: H^1_\et(Z,A) \to H^1_\et (X,A).
\]
\end{lemma}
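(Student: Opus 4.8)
The plan is to reduce the whole statement to the level of \v Cech $1$-cocycles, where both the functor $\pi_*$ and the transfer map admit explicit descriptions, and then simply to check that these two descriptions coincide. First I would fix a torsor $\T \in \Tor(Z,A)$ and choose, exactly as in the construction preceding the lemma, an \'etale cover $(U_i\to X)_{i\in I}$ such that $\T$ trivializes over the pull-back cover $(\pi^{-1}(U_i)\to Z)_{i\in I}$; this is possible because $Z\times_X X_x^{sh}$ is a finite product of strictly henselian local schemes for every $x\in X$. A choice of trivialization produces a \v Cech $1$-cocycle $a=(a_{ij})$ on $Z$ representing the class $[\T]\in H^1_\et(Z,A)$, and by that same construction the torsor $\pi_*(\T)$ is represented over $(U_i\to X)_{i\in I}$ by the cocycle
\[
[k(Z):k(X)]_{in} \cdot \Big(\sum_{\varphi \in \Mor_X(\widetilde Z, Z)} \varphi^*(a_{ij})\Big),
\]
where $\widetilde Z\to X$ is a pseudo-Galois covering dominating $Z$. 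The key input here is that the sum $\sum_\varphi \varphi^*(a_{ij})$ is $G$-invariant for $G=\Aut_X(\widetilde Z)$, so that it descends to a cocycle on $X_{in}$ and then transports to $X$ along the topological isomorphism $X_{in}\to X$.

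Next I would recall that the transfer map $\pi_*: H^1_\et(Z,A)\to H^1_\et(X,A)$ of \cite{MVW}, 6.11 and 6.21, is \emph{defined} on \v Cech cocycles by precisely this recipe: one passes to a pseudo-Galois cover, symmetrizes the cocycle over the $X$-morphisms $\widetilde Z\to Z$, descends the resulting invariant cocycle through $X_{in}$, and multiplies by the inseparable degree $[k(Z):k(X)]_{in}$. Since the functor $\pi_*$ and the cohomological transfer therefore assign to the representing cocycle $a$ the \emph{same} cocycle on $(U_i\to X)_{i\in I}$, the classes $[\pi_*(\T)]$ and $\pi_*[\T]$ agree in $H^1_\et(X,A)$. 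The assignment $[\T]\mapsto[\pi_*(\T)]$ is well defined on isomorphism classes because the functor respects isomorphisms, and it is a group homomorphism because $\pi_*$ commutes with ``$+$'' and ``$(-1)$'' up to natural isomorphism; this is consistent with the transfer being a homomorphism, and completes the identification.

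The step I expect to be the main obstacle is verifying that the two recipes genuinely coincide rather than merely resembling one another. One has to match the normalizations carefully, in particular the role of the inseparable factor $[k(Z):k(X)]_{in}$ and the passage through the purely inseparable layer $X_{in}\to X$, and to confirm that the construction of \cite{MVW} --- though phrased there for general degree $i$ via correspondences --- reduces in degree one to exactly the \v Cech symmetrization used above. Once the identification of the two cocycle formulas is established, the remaining assertions (independence of the choice of cover, of the chosen trivialization, and of the pseudo-Galois covering $\widetilde Z$) follow formally, since both sides are already known to define the class $\pi_*[\T]$ independently of these choices.
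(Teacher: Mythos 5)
Your proposal is correct and follows essentially the same route as the paper: the paper likewise describes $\pi_*$ on \v Cech $1$-cocycles, obtains the cocycle $[k(Z):k(X)]_{in}\cdot\sum_{\varphi}\varphi^*(a_{ij})$ on $(U_i\to X)_{i\in I}$, and observes that this is exactly the defining recipe of the transfer in \cite{MVW}, 6.11, 6.21. The only difference is one of emphasis --- you flag the matching of normalizations as a potential obstacle, whereas the paper treats the coincidence of the two formulas as immediate.
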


If any finite subset of closed points of $X$ is contained in an affine open, then symmetric powers exist, and another description of the push-forward for torsors is the following:
Associated with the finite morphism $\pi: Z \to X$ of degree $d$, there is a section $s_\pi: X \to \Sym^d (Z/X)$ to the natural projection $\Sym^d (Z/X) \to X$ (see (\cite{SV}, p.~81). We denote the composite of $s_\pi$ with $pr: \Sym^d(Z/X) \to \Sym^d(Z)$ by $S_\pi$. Defining  $f: \widetilde Z \to \Sym^d(Z/X)$ by repeating each element in $\Mor_{X}(\widetilde Z, Z)$  exactly $[k(Z):k(X)]_{in}$-times,  the diagram
\[
\begin{tikzcd}
\widetilde Z  \arrow{rr}{f}\arrow{rrdd}[swap]{\widetilde{\pi}} && \Sym^d(Z/X)\arrow{r}{pr}& \Sym^d(Z)\\
\\
&&X\arrow{uu}{s_\pi}\arrow{ruu}[swap]{S_\pi}
\end{tikzcd}
\]
commutes. For  an $A$-torsor $\T\to Z$,  the $d$-fold self-product $ \T \times_k  \cdots \times_k \T$ is an $A^{d}$-torsor over the $d$-fold self-product of $Z$ in a natural way.
Taking the quotient by the $A^{d-1}$-action
\[
(a_1,\ldots,a_{d-1}) (t_1,\ldots, t_d) = (t_1 +a_1, t_2-a_1+ a_2,t_3-a_2+a_3,\ldots, t_d -a_{d-1}),
\]
we obtain an $A$-torsor over $Z^d$.  Dividing out the by the action of the symmetric group $S_d$, we obtain an $A$-torsor over  $\Sym^d(Z)$ and denote it by $\Sym^d_A(\T)$.
We obtain natural isomorphisms in $\Tor(\widetilde Z,A)$:
\[
\begin{array}{rcl}
\ds [k(Z):k(X)]_{in} \cdot \!\!\!\!\!\sum_{\varphi \in \Mor_{X}(\widetilde Z, Z)} \!\!\!\!\! \varphi^*(\T)& \cong & (pr\circ f)^* \Sym^d_A(\T)\\
 & \cong & \widetilde \pi^* \circ (pr\circ s_\pi)^*\Sym^d_A(\T).
\end{array}
\]  By our construction of $\pi_*(\T)$ we obtain
\begin{lemma} \label{symprod} We have a natural isomorphism in $\Tor(X,A)$:
\[
\pi_*(\T) \cong S_\pi^*(\Sym^d(\T)),
\]
where $S_\pi= pr\circ s_\pi: X \to \Sym^d(Z)$.
\end{lemma}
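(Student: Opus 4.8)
The plan is to deduce the isomorphism on $X$ from the chain of natural isomorphisms already established over $\widetilde Z$, by controlling how both sides descend along $\widetilde\pi\colon \widetilde Z \to X$. Write $e=[k(Z):k(X)]_{in}$, $G=\Aut_X(\widetilde Z)$ and $\widetilde\T=\sum_{\varphi}\varphi^*(\T)$. The displayed chain provides a natural isomorphism
\[
\widetilde\pi^*\bigl(S_\pi^*(\Sym^d(\T))\bigr)\;\cong\;e\cdot\widetilde\T
\]
in $\Tor(\widetilde Z,A)$, using the commutativity $S_\pi\circ\widetilde\pi=pr\circ f$. On the one hand, the left-hand torsor is by construction pulled back from the torsor $S_\pi^*(\Sym^d(\T))$ already living on $X$. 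On the other hand, the right-hand torsor is exactly the object out of which $\pi_*(\T)$ was built: $\widetilde\T$ carries a canonical $G$-action, $\widetilde\T/G$ descends along the universal homeomorphism $X_{in}=\widetilde Z/G\to X$ to the torsor $\T'$, and $\pi_*(\T)=e\cdot\T'$. Thus both sides of the claimed isomorphism pull back to the same torsor on $\widetilde Z$, and what remains is to show that the isomorphism itself descends.

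I would carry out the descent in the two evident stages of the factorization $\widetilde Z \xrightarrow{\,/G\,} X_{in}\to X$. The second stage is free: since $X_{in}\to X$ is a universal homeomorphism, topological invariance of the \'etale site gives an equivalence between the categories of $A$-torsors on $X_{in}$ and on $X$, under which isomorphisms correspond bijectively. It therefore suffices to descend the isomorphism along the Galois covering $\widetilde Z\to X_{in}$ with group $G$, that is, to check that it is $G$-equivariant for the natural $G$-actions on the two sides.

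The $G$-equivariance is where the symmetric-power construction does its work. The group $G$ acts on $\Mor_X(\widetilde Z,Z)$ by $\varphi\mapsto\varphi\circ g$, hence permutes the summands of $\widetilde\T$ and, compatibly, permutes the $d$ factors of the map $f\colon\widetilde Z\to\Sym^d(Z/X)$ that repeats each $\varphi$ exactly $e$ times. Because $\Sym^d(\T)$ is by definition a quotient by the full symmetric group $S_d$, the pullback $(pr\circ f)^*\Sym^d(\T)$ is insensitive to this permutation; meanwhile $\widetilde\pi^*(S_\pi^*\Sym^d(\T))$ carries the canonical descent $G$-action arising from the fact that $S_\pi^*\Sym^d(\T)$ is defined on $X$. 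I would trace the two isomorphisms of the chain, both of which are assembled only from the $S_d$-symmetrization and from the commutativity of the displayed square relating $f$, $pr$, $s_\pi$ and $\widetilde\pi$, and verify that they intertwine these two $G$-actions; hence the isomorphism is $G$-equivariant.

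Descending first to $X_{in}$ and then, via topological invariance, to $X$, the left-hand side becomes $S_\pi^*(\Sym^d(\T))$ and the right-hand side becomes $e\cdot\T'=\pi_*(\T)$, which yields the asserted natural isomorphism $\pi_*(\T)\cong S_\pi^*(\Sym^d(\T))$ in $\Tor(X,A)$. The one genuinely delicate point I expect is precisely the $G$-equivariance check: one must confirm that every identification in the chain is compatible with relabelling the morphisms $\varphi$ by $G$, and this ultimately rests on the symmetry designed into $\Sym^d(\T)$ together with the commutativity of the defining diagram. The remaining bookkeeping, namely matching the multiplicity $e$ and the count $[k(Z):k(X)]_{sep}$ of morphisms $\varphi$ against the $d=e\cdot[k(Z):k(X)]_{sep}$ factors of $\Sym^d$, is routine.
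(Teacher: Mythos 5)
Your proposal is correct and follows essentially the same route as the paper: the paper's entire argument consists of the displayed chain of natural isomorphisms $[k(Z):k(X)]_{in}\cdot\sum_\varphi\varphi^*(\T)\cong(pr\circ f)^*\Sym^d(\T)\cong\widetilde\pi^*(pr\circ s_\pi)^*\Sym^d(\T)$ over $\widetilde Z$ followed by the remark ``by our construction of $\pi_*(\T)$,'' i.e.\ exactly the descent along $\widetilde Z\to X_{in}\to X$ that you spell out. Your additional care about $G$-equivariance and topological invariance of the \'etale site for $X_{in}\to X$ fills in precisely what the paper leaves implicit, and your multiplicity bookkeeping $d=[k(Z):k(X)]_{in}\cdot[k(Z):k(X)]_{sep}$ is the right one.
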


\bigskip

Assume now that $X$ is regular and $Y$ arbitrary. The group of finite correspondences $\Cor(X,Y)$ is defined as the free abelian group on the set of integral subschemes $Z\subset X \times Y$ which project finitely and surjectively to a connected component of~$X$.
For such a $Z$, we define $p_{[Z\to X]\, *}:  \Tor(Z,A) \to \Tor(X,A)$ by extending (if $X$ is not connected) the push-forward torsor defined above in a trivial way to those connected components of $X$ which are not dominated by $Z$.
We consider the functor
\[
[Z]^*= p_{[Z\to X]\, *} \circ p_{[Z\to Y]}^* : \Tor (Y, A) \lang \Tor(X,A).
\]
Using the operations ``$+$'' and ``$(-1)$''  we extend this construction to arbitrary finite correspondences.

\begin{definition}
Let $X$ be regular, $Y$ arbitrary and $\alpha= \sum n_i Z_i \in \Cor(X,Y)$ a finite correspondence. Then
\[
\alpha^*: \Tor(Y,A) \lang \Tor(X,A)
\]
is defined by  setting
\[
 \alpha^*(\T):= \sum n_i [Z_i]^*(\T).
\]
\end{definition}

Using the isomorphism (\ref{eq2}) above, we immediately obtain
\begin{lemma}
For $\alpha_1,\alpha_2\in Cor(X,Y)$ and $\T_1, \T_2 \in \Tor(Y,A)$, $n_1,n_2\in \Z$, we have   a natural isomorphism
\[
(\alpha_1+\alpha_2)^*(n_1\T_1 + n_2 \T_2) \cong n_1\alpha_1^*(\T_1) +  n_1\alpha_2^*(\T_1) + n_2\alpha_1^*(\T_2) + n_2\alpha_2^*(\T_2).
\]
\end{lemma}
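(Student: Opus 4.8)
The plan is to reduce the bilinearity statement to two facts that are essentially already in place: that ordinary pullback along a morphism and the push-forward $\pi_*$ each commute with the operations ``$+$'' and ``$(-1)$'', and that the definition of $\alpha^*$ is by construction additive in the correspondence variable. Since $\alpha^*$ is defined termwise by $\alpha^*(\T)=\sum n_i[Z_i]^*(\T)$, and each $[Z_i]^*=p_{[Z_i\to X]\,*}\circ p_{[Z_i\to Y]}^*$, the real content is to establish, for a single integral $Z$, that the functor $[Z]^*$ commutes with ``$+$'' and ``$(-1)$'', and then to assemble the pieces into the four-term expansion.

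First I would treat the torsor variable. For the morphism pullback $p_{[Z\to Y]}^*$, both ``$+$'' and ``$(-1)$'' are defined by quotient constructions on fibre products over the base, and base change commutes with these quotients; hence $p_{[Z\to Y]}^*$ carries $\T_1+\T_2$ to $p^*\T_1+p^*\T_2$ and $-\T$ to $-p^*\T$ up to canonical isomorphism. For the push-forward $p_{[Z\to X]\,*}=\pi_*$, additivity with respect to ``$+$'' and ``$(-1)$'' was already recorded above. Composing, $[Z]^*$ commutes with ``$+$'' and ``$(-1)$''. Iterating this additivity and invoking the isomorphisms (\ref{eq1}) and (\ref{eq2}) to cancel opposite summands, one deduces $[Z]^*(n\cdot\T)\cong n\cdot[Z]^*(\T)$ for every $n\in\Z$, and more generally $[Z]^*(n_1\T_1+n_2\T_2)\cong n_1[Z]^*(\T_1)+n_2[Z]^*(\T_2)$.

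Next I would treat the correspondence variable. Writing $\alpha_1=\sum_i n_iZ_i$ and $\alpha_2=\sum_j m_jW_j$, the sum $\alpha_1+\alpha_2$ in the free abelian group $\Cor(X,Y)$ collects the coefficients of each integral subscheme occurring in both; the definition of $(-)^*$ then yields $(\alpha_1+\alpha_2)^*(\T)\cong\alpha_1^*(\T)+\alpha_2^*(\T)$ at once, provided integer multiplication of torsors satisfies $(n+m)\cdot\T\cong n\cdot\T+m\cdot\T$. The latter again follows from associativity and commutativity of ``$+$'', using (\ref{eq2}) to absorb the cancellation when $n$ and $m$ have opposite signs. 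Combining the additivity in $\T$ from the previous step with this additivity in $\alpha$ gives the displayed identity.

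The only genuine subtlety, and hence the main point requiring care, is that all of these operations are associative, commutative, and distributive only \emph{up to natural isomorphism} of torsors rather than on the nose. So the argument is chiefly a coherence bookkeeping: one must check that the canonical isomorphisms supplied by the quotient constructions, by the additivity of $\pi_*$, and by (\ref{eq1})--(\ref{eq2}) are mutually compatible, so that the isomorphism in the statement is well defined and natural in $\T_1,\T_2$. Since each individual compatibility is an instance of functoriality already built into ``$+$'', ``$(-1)$'' and $\pi_*$, no new geometric input is needed, which is why the claim follows immediately from the earlier constructions.
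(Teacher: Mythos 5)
Your argument is correct and matches the paper's intent: the paper offers no written proof beyond the remark that the lemma follows immediately from the isomorphism $\T+(-\T)\cong Y\times A$ together with the termwise definition of $\alpha^*$ and the stated additivity of $\pi_*$ with respect to ``$+$'' and ``$(-1)$'', which is exactly the reduction you carry out. Your explicit attention to the coherence of the canonical isomorphisms is a reasonable elaboration of what the authors leave implicit.
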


\bigskip
If $X$ and $Y$ are regular and $Z$ is arbitrary, we have a natural composition law
\[
\Cor(X,Y) \times \Cor(Y,Z) \lang \Cor(X,Z),\ (\alpha,\beta) \mapsto \beta \circ \alpha,
\]
(see \cite{MVW}, Lecture 1). A straightforward but lengthy computation unfolding the definitions shows
\begin{proposition} \label{compo}
Let $X$ and $Y$ be regular and $Z$ arbitrary.  Let $\alpha\in \Cor(X,Y)$ and $\beta\in \Cor(Y,Z)$.  Then, for any $\T\in \Tor(Z,A)$, we have  a canonical isomorphism
\[
\alpha^*(\beta^*(\T)) \cong (\beta\circ \alpha)^*(\T).
\]
\end{proposition}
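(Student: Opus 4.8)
The plan is to reduce to elementary correspondences and then to unwind both sides through a finite base-change identity for the push-forward of torsors. By the additivity of $\alpha\mapsto\alpha^*$ recorded in the previous lemma, together with the bilinearity of the composition law on $\Cor$ (\cite{MVW}, Lecture~1), I may assume $\alpha=[W]$ and $\beta=[V]$ for integral $W\subset X\times Y$ and $V\subset Y\times Z$, finite and surjective over $X$ and $Y$ respectively. Write $f\colon W\to X$, $g\colon W\to Y$, $\pi\colon V\to Y$ and $h\colon V\to Z$ for the four projections, and set $T:=W\times_Y V$, the fibre product over $g$ and $\pi$, with projections $q\colon T\to W$ and $r\colon T\to V$. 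Embedded in $X\times Y\times Z$, the scheme $T$ is precisely the support of the intersection cycle $(W\times Z)\cdot(X\times V)$ whose image under $X\times Y\times Z\to X\times Z$ defines $\beta\circ\alpha$. Write $T=\bigcup_k T_k$ for its irreducible components, $m_k$ for the corresponding intersection (Serre/Tor) multiplicities, $W_k\subset X\times Z$ for the image of $T_k$, and $d_k=[k(T_k):k(W_k)]$, so that by definition $\beta\circ\alpha=\sum_k m_k d_k\,[W_k]$.

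With this notation $\beta^*(\T)=\pi_*\,h^*(\T)$, so $\alpha^*(\beta^*(\T))=f_*\,g^*\,\pi_*\,h^*(\T)$. The heart of the argument is a finite base-change isomorphism
\[
g^*\circ\pi_*\ \cong\ \sum_k m_k\,(q_k)_*\circ(r_k)^*
\]
for the cartesian square $T=W\times_Y V$, where $q_k,r_k$ are the restrictions of $q,r$ to $T_k$ and the multiplicities $m_k$ account for the possible non-reducedness of the fibre product. Granting this, together with transitivity of the torsor push-forward, functoriality of ordinary pull-back, additivity of $f_*$, and the push-pull degree formula $\rho_*\rho^*\cong\deg(\rho)\cdot\id$ for a finite surjective $\rho$, I would compute
\[
\alpha^*(\beta^*(\T))\ \cong\ \sum_k m_k\,(f q_k)_*\,(h r_k)^*(\T)\ \cong\ \sum_k m_k\,(a_k)_*\,(c_k)^*(\T),
\]
where $a_k\colon T_k\to X$ and $c_k\colon T_k\to Z$ are the outer projections. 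Since $a_k$ and $c_k$ both factor through the image $W_k\subset X\times Z$ via a finite surjection $T_k\to W_k$ of degree $d_k$, a second application of the degree formula turns each summand into $d_k\cdot[W_k]^*(\T)$, whence $\alpha^*(\beta^*(\T))\cong\bigl(\sum_k m_k d_k\,[W_k]\bigr)^*(\T)=(\beta\circ\alpha)^*(\T)$.

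The main obstacle is the base-change isomorphism above. Because the push-forward of torsors is not the naive one but is built from a pseudo-Galois covering $\widetilde Z\to X$ and carries the correcting factor $[k(Z):k(X)]_{in}$, verifying this base change—and simultaneously checking that the separable and purely inseparable parts of the local degrees of $T\to X$ recombine into the intersection multiplicities $m_k d_k$ defining $\beta\circ\alpha$—is exactly the ``lengthy computation'' alluded to. I expect the inseparable bookkeeping in positive characteristic, rather than the diagram chase itself, to be the delicate point; over a field of characteristic $0$ all the inseparable-degree factors are trivial and the identity reduces to the classical base-change and projection formulas for cycles and transfers.
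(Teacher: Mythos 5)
The paper gives no actual proof of this proposition --- it is dismissed as ``a straightforward but lengthy computation unfolding the definitions'' --- so the only question is whether your outline is a viable version of that computation. Its skeleton is: reduce by bilinearity to elementary correspondences $[W]$, $[V]$; base-change the push-forward along the fibre product $W\times_Y V$; recombine the resulting multiplicities into those defining $\beta\circ\alpha$. That is surely the intended route. But as written your text is a plan, not a proof: the base-change isomorphism $g^*\circ\pi_*\cong\sum_k m_k\,(q_k)_*\circ(r_k)^*$, which you yourself call ``the heart of the argument'' and ``the main obstacle,'' is only asserted. All of the content of the proposition sits in that one statement --- in particular the identification of the torsor-level multiplicities with the intersection multiplicities occurring in the definition of $\beta\circ\alpha$ (including showing that components of $W\times_Y V$ not dominating $W$ receive multiplicity zero, and the separable/inseparable bookkeeping in characteristic $p$). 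Deferring it means the proposal does not yet establish the result.

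There is also a concrete technical obstruction to the intermediate steps as you have arranged them: the push-forward of torsors is only constructed for a finite surjective morphism onto a \emph{normal} (connected) base, but your functors $(q_k)_*$ and $(\rho_k)_*$ take values in $\Tor(W,A)$ and $\Tor(W_k,A)$, and neither $W\subset X\times Y$ nor $W_k\subset X\times Z$ need be normal. So ``transitivity of the torsor push-forward'' and the degree formula $\rho_*\rho^*\cong\deg(\rho)\cdot\id$ cannot be quoted for these maps; the computation has to be organized so that one only ever pushes forward to the regular schemes $X$ and $Y$. This is repairable --- for instance by choosing a single pseudo-Galois covering $\widetilde{T_k}\to X$ dominating $T_k\to X$ (hence also $W_k\to X$) and comparing the index sets $\Mor_X(\widetilde{T_k},T_k)$ and $\Mor_X(\widetilde{T_k},W_k)$ directly, which yields $(a_k)_*(c_k)^*(\T)\cong d_k\cdot[W_k]^*(\T)$ without ever forming a push-forward to $W_k$ --- but that rearrangement, together with the proof of the base-change isomorphism itself, is exactly the ``lengthy computation'' and is currently missing.
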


Finally, assume that $mA=0$ for some natural number $m$. Then (using the isomorphism (\ref{eq1}) above), we have for any $\alpha,\beta\in Cor(X,Y)$, $\T \in \Tor(Y,A)$, a natural isomorphism
\[
(\alpha+m\beta)^*(\T) \cong \alpha^*(\T).
\]
Therefore, we have an $A$-torsor
\[
\bar \alpha^*(\T) \in \Tor(X,A)
\]
given up to unique isomorphism for any $\bar \alpha\in \Cor(X,Y)\otimes \Z/m\Z$. In other words, we obtain the

\begin{lemma}
Assume that $m A=0$, and let $\alpha,\beta\in \Cor(X,Y)$ have the same image in $\Cor(X,Y)\otimes \Z/m\Z$. Then there is a natural isomorphism of functors
\[
\alpha^* \cong \beta^* : \Tor(Y,A) \to \Tor(X,A).
\]
\end{lemma}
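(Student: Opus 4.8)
The plan is to reduce the assertion to the natural isomorphism $(\alpha'+m\beta')^*(\T)\cong(\alpha')^*(\T)$ displayed immediately before the lemma, which was itself deduced from the trivialization~(\ref{eq1}). First I would unwind the hypothesis: by definition $\Cor(X,Y)$ is the free abelian group on the integral subschemes $Z\subset X\times Y$ projecting finitely and surjectively onto a component of $X$, so it is a free $\Z$-module and $\Cor(X,Y)\otimes\Z/m\Z=\Cor(X,Y)/m\,\Cor(X,Y)$. Hence the assumption that $\alpha$ and $\beta$ have the same image in $\Cor(X,Y)\otimes\Z/m\Z$ says precisely that $\alpha-\beta\in m\,\Cor(X,Y)$, and I may fix $\gamma\in\Cor(X,Y)$ with $\alpha=\beta+m\gamma$.

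Next I would apply the preceding natural isomorphism with $\beta$ in the role of $\alpha'$ and $\gamma$ in the role of $\beta'$, obtaining for every $\T\in\Tor(Y,A)$ an isomorphism
\[
\alpha^*(\T)=(\beta+m\gamma)^*(\T)\cong\beta^*(\T).
\]
To see that this is a natural isomorphism of functors, I would exhibit it as the composite of the additivity isomorphism $(\beta+m\gamma)^*(\T)\cong\beta^*(\T)+m\cdot\gamma^*(\T)$ coming from the additivity lemma above, the trivialization $m\cdot\gamma^*(\T)\cong X\times A$ of~(\ref{eq1}) (valid since $\gamma^*(\T)$ is an $A$-torsor on $X$ and $mA=0$), and the identification $\beta^*(\T)+(X\times A)\cong\beta^*(\T)$ expressing that adding the trivial torsor is the identity. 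Each factor is natural in $\T$ by its respective statement, so the composite is a natural transformation; it is an isomorphism because each factor is.

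The only subtlety --- and the closest thing to an obstacle --- is that $\gamma$ is not canonical, so a different decomposition $\alpha=\beta+m\gamma'$ would a priori give a different isomorphism. Since the lemma only asserts the \emph{existence} of a natural isomorphism this is harmless; indeed, as already recorded just before the statement, $\alpha^*(\T)$ depends, up to unique isomorphism, only on the image $\bar\alpha\in\Cor(X,Y)\otimes\Z/m\Z$, so the lemma is in essence a repackaging of that well-definedness and no genuine difficulty remains.
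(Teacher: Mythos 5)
Your proposal is correct and follows essentially the same route as the paper: the lemma is deduced directly from the displayed natural isomorphism $(\alpha+m\beta)^*(\T)\cong\alpha^*(\T)$ immediately preceding it, which in turn rests on the trivialization~(\ref{eq1}) of $m\cdot\T$ when $mA=0$. Your explicit unwinding into the additivity isomorphism, the trivialization, and the identification with the trivial torsor, as well as your remark on the non-canonicity of $\gamma$, matches what the paper leaves implicit.
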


For a regular connected curve $C$ we consider the subgroup $H^1_t(C,A)\subseteq H^1_\et(C,A)$ of tame cohomology classes (corresponding to those continuous homomorphisms $\pi_1^\et(C)\to A$ which factor through the tame fundamental group $\pi_1^t(\bar C, \bar C -C)$, where $\bar C$ is the unique regular compactification of $C$).

For a general scheme $X$ over $k$ we call a cohomology class in $a\in H^1_\et(X,A)$ curve-tame (or just tame) if for any morphism $f:C\to X$ with $C$ a regular curve, we have $f^*(a)\in H^1_t(C,A)$. The tame cohomology classes form a subgroup
\[
H^1_t(X,A)\subseteq  H^1_\et(X,A).
\]
The groups coincide if $X$ is proper or if $p=0$ or if $p>0$ and
$A$ is $p$-torsion free, where $p$ is the characteristic of the base field $k$.

\begin{definition}
We call an \'{e}tale $A$-torsor $\T$ on $X$ \emph{tame} if its isomorphism class lies in $H^1_t(X,A)\subseteq H^1_\et(X,A)$.
\end{definition}

\begin{lemma}\label{tametotame}
Let $Z$ be integral, $X$  normal, connected, $\pi:Z\to X$ finite, surjective and $f: Z\to Y$ any morphism. Let $\T$ be a tame torsor on $Y$. Then $\pi_*(f^*(\T))$ is a tame torsor on $X$.
\end{lemma}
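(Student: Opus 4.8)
The plan is to reduce to two facts: that pullback of torsors preserves tameness, and that the transfer along a finite morphism of regular curves preserves tameness. First I would record that tameness is stable under arbitrary pullback: if $\T$ is a tame torsor on $Y$ and $h\colon W\to Y$ is any morphism, then $h^*\T$ is tame on $W$. Indeed, any morphism $C\to W$ from a regular curve composes with $h$ to a morphism $C\to Y$, and the pullback of $h^*\T$ to $C$ equals the pullback of $\T$ along $C\to Y$, which is tame by hypothesis. In particular $\mathcal S := f^*\T$ is a tame torsor on $Z$, and it suffices to show that $\pi_*(\mathcal S)$ is tame whenever $\mathcal S$ is a tame torsor on $Z$.

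To test tameness of $\pi_*(\mathcal S)$, fix a regular curve $C$ together with a morphism $g\colon C\to X$; I must show $g^*\pi_*(\mathcal S)\in H^1_t(C,A)$. Form the fibre product $Z\times_X C$, which is finite and surjective over $C$, and let $\nu\colon\coprod_{j} C_j\to Z\times_X C$ be the normalization of its irreducible components. Since $C$ is regular of dimension $\le 1$, each $C_j$ is a regular curve; write $\pi_j\colon C_j\to C$ for the (finite, surjective) projection and $h_j\colon C_j\to Z$ for the other projection composed with $\nu$. The standard base-change compatibility of the transfer (see \cite{MVW}, 6.11, 6.21) gives, on the level of cohomology classes, an identity of the form
\[
g^*\pi_*(\mathcal S)=\sum_j m_j\,(\pi_j)_*\,h_j^*(\mathcal S)\in H^1_\et(C,A),
\]
with integers $m_j$ (the geometric multiplicities of the components together with the relevant inseparability degrees). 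By the first step each $h_j^*(\mathcal S)$ is tame on $C_j$, so it remains to prove that $(\pi_j)_*$ carries tame classes to tame classes; granting this, $g^*\pi_*(\mathcal S)$ is an integral combination of tame classes and hence tame.

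It thus remains to prove the curve case: for a finite surjective morphism $\rho\colon C'\to C$ of regular curves, the transfer $\rho_*\colon H^1_\et(C',A)\to H^1_\et(C,A)$ sends $H^1_t(C',A)$ into $H^1_t(C,A)$. Let $\bar\rho\colon\bar{C'}\to\bar C$ be the extension to the smooth compactifications. Tameness is a local condition at the boundary points $x\in\bar C\setminus C$ and can be checked after base change to the henselisation $\Spec\mathcal O^h_{\bar C,x}$; since the transfer is compatible with this localisation, one is reduced to the local statement that corestriction
\[
\mathrm{cor}\colon\textstyle\bigoplus_{x'\mid x}H^1(K_{x'},A)\lang H^1(K_x,A)
\]
preserves the subgroups of characters that are trivial on wild inertia. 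This follows from the Mackey formula together with the fact that the wild inertia $P$ is the normal pro-$p$ Sylow subgroup of the inertia group, so that $P_x\cap{}^gG_{K_{x'}}={}^gP_{x'}$ for all $g$; a character trivial on $P_{x'}$ then corestricts to one trivial on $P_x$. I expect this local analysis — identifying $\rho_*$ with corestriction, checking its compatibility with henselisation at the boundary, and running the Mackey computation — to be the main obstacle, whereas the global base-change identity of the previous paragraph is routine bookkeeping (the precise multiplicities $m_j$ being irrelevant, since the tame classes form a subgroup stable under multiplication by integers).
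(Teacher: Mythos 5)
Your argument is correct, but it handles the essential step --- the curve case --- by a genuinely different route than the paper. Your first two reductions (pullback preserves tameness; test $\pi_*(\mathcal{S})$ along a regular curve $g\colon C\to X$ and rewrite $g^*\pi_*(\mathcal{S})$ as an integral combination of transfers $(\pi_j)_*h_j^*(\mathcal{S})$ from regular curves over $C$) are, up to your extra normalization step, exactly the paper's appeal to Proposition~\ref{compo}: the composite correspondence $[Z]\circ\Gamma_g$ is supported on the components of $C\times_X Z$, and you are right that the multiplicities are irrelevant. The divergence is in proving that $\rho_*$ preserves tameness for a finite surjective morphism $\rho\colon C'\to C$ of regular curves. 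The paper reduces to $A=\Z/p^r\Z$ (the prime-to-$p$ part being vacuous), notes that for such coefficients tameness at a regular boundary point forces the torsor to extend to the canonical compactification (the tame inertia quotient is pro-prime-to-$p$), and then simply applies the torsor-level push-forward construction to $\bar C'\to\bar C$ --- no local Galois cohomology appears. You instead identify $\rho_*$ with corestriction on $H^1(k(C'),A)$, localize at each boundary point via the double-coset formula, and use that wild inertia is the normal pro-$p$-Sylow of inertia, so that $P_x\cap G_{K_{x'}}=P_{x'}$ and the Verlagerung carries $P_x$ into the image of $P_{x'}$; this is correct and has the merit of treating all coefficients uniformly without splitting off the $p$-primary part, but it obliges you to verify that the geometric transfer (defined with the inseparability factor $[k(C'):k(C)]_{in}$ via pseudo-Galois covers) really is corestriction and that it commutes with restriction to the henselizations at the boundary. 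Both proofs are valid; the paper's is shorter because the coefficient reduction turns ``tame'' into ``extends across the boundary,'' which the push-forward construction visibly respects.
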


\begin{proof} By definition, $f^*$ preserves curve-tameness. So we may assume $Z=Y$, $f=\id$. Again by the definition of curve-tameness and using Proposition~\ref{compo}, we may reduce to the case that $X$ is a regular curve.  Since \'{e}tale cohomology commutes with direct limits of coefficients, we may assume that $A$ is a finitely generated abelian group. Furthermore, we may assume that $\ch(k)=p>0$ and $A=\Z/p^r\Z$, $r\geq 1$.

Let $\bar Z$ be the canonical compactification of $Z$, i.e., the unique proper curve over~$k$ which contains $Z$ as a dense open subscheme and such that all points of $\bar Z \smallsetminus Z$ are regular points of $\bar Z$. By the definition of tame coverings of curves, $\T$ extends to a $\Z/p^r\Z$-torsor on $\bar Z$. Hence also $\pi_*(\T)$ extends to the canonical compactification $\bar X$ of $X$ and so is tame.
\end{proof}

\begin{proposition} \label{numerical}
Let $\bar X$ be a proper and regular scheme over $k$ and let $X\subset \bar X $ be a dense open subscheme. Let $p=\ch(k) >0$.  Then for any $r\geq 1$ the natural inclusion
\[
 H^1_\et(\bar X,\Z/p^r\Z) \hookrightarrow  H^1_\et(X,\Z/p^r\Z)
\]
induces an isomorphism
\[
H^1_\et(\bar X,\Z/p^r\Z)= H^1_t(\bar X,\Z/p^r\Z) \stackrel{\sim}{\longrightarrow} H^1_t(X,\Z/p^r\Z) \subseteq H^1_\et(X,\Z/p^r\Z).
\]

\end{proposition}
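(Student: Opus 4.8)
The statement packages three assertions, of which only surjectivity is substantial. The displayed equality $H^1_\et(\bar X,\Z/p^r\Z)=H^1_t(\bar X,\Z/p^r\Z)$ is immediate from the remark preceding the definition of tameness, since $\bar X$ is proper. Working one connected component at a time, I may assume $\bar X$ is integral and regular with $X$ a dense open. Injectivity of the restriction map follows from surjectivity of $\pi_1^\et(X)\to\pi_1^\et(\bar X)$: a connected finite \'etale cover of the normal connected scheme $\bar X$ is integral, hence its restriction to the dense open $X$ stays connected; combined with $H^1_\et(-,\Z/p^r\Z)=\Hom_{\mathrm{cont}}(\pi_1^\et(-),\Z/p^r\Z)$ this gives injectivity. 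That the image lies in $H^1_t(X,\Z/p^r\Z)$ is pure functoriality of curve-tameness: for a morphism $f\colon C\to X$ from a regular curve and $i\colon X\hookrightarrow\bar X$, one has $f^*(i^*\bar a)=(i\circ f)^*\bar a$, which is tame because $\bar a$ is tame on $\bar X$ and $i\circ f$ is a curve over $\bar X$.

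For surjectivity I would represent $a\in H^1_t(X,\Z/p^r\Z)$ by a finite \'etale $\Z/p^r\Z$-cover $Y\to X$ and let $\bar Y\to\bar X$ be the normalization of $\bar X$ in $Y$, carrying the extended $\Z/p^r\Z$-action with $\bar Y|_X=Y$. It suffices to show that $\bar Y\to\bar X$ is everywhere \'etale, for then $a$ is the restriction of the resulting class on $\bar X$. By purity of the branch locus (Zariski--Nagata; here $\bar X$ is regular and $\bar Y$ is normal) this reduces to unramifiedness at every codimension-one point $\eta$ of $\bar X$, which must lie on $D:=\bar X\smallsetminus X$. At such $\eta$ the inertia group is a subgroup of the $p$-group $\Z/p^r\Z$; since tame inertia is prime to $p$, tameness at $\eta$ already forces the inertia to be trivial. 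Everything thus comes down to deducing tameness at the codimension-one point $\eta$ from the curve-tameness of $a$.

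This deduction is the main obstacle, and I would argue by contradiction and localization. Suppose $\bar Y\to\bar X$ is ramified along the component $D_0=\overline{\{\eta\}}$, with nontrivial inertia $I_\eta\subseteq\Z/p^r\Z$ of $p$-power order. Choosing a general closed point $x\in D_0$ where $\bar X$ and $D_{\mathrm{red}}$ are regular and only $D_0$ passes, and a regular system of parameters $t_1,\dots,t_n$ at $x$ with $D=\{t_1=0\}$ locally, I would produce a regular curve $h\colon C\to\bar X$ through $x=h(c_0)$ that is transversal to $D_0$ (so $h^*t_1$ is a uniformizer at $c_0$) and satisfies $h(C\smallsetminus\{c_0\})\subseteq X$. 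Transversality makes the inertia of the pullback cover at $c_0$ equal to $I_\eta$, of $p$-power order $>1$, so with $f=h|_{C\smallsetminus\{c_0\}}$ the class $f^*a$ is wildly ramified at $c_0$ and hence not tame, contradicting $a\in H^1_t(X,\Z/p^r\Z)$.

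The delicate point is exactly this transversal-slicing identity for inertia, a local ramification computation amounting to Abhyankar's lemma (a transversal arc meets the branch divisor with multiplicity one); conceptually it is the known equivalence of curve-tameness and divisor-tameness for regular compactifications (Kerz--Schmidt), and I expect it to absorb essentially all the technical effort. Granting it, there is no codimension-one ramification, purity yields that $\bar Y\to\bar X$ is \'etale, and $a$ lifts to $H^1_\et(\bar X,\Z/p^r\Z)$, completing the proof of surjectivity.
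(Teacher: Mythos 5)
Your proof is correct and follows essentially the same route as the paper: both reduce surjectivity to showing that curve-tameness forces the inertia along $\bar X\smallsetminus X$ to be prime to $p$ (hence trivial for a $p$-power-order cover), so that the normalization of $\bar X$ in the cover is \'etale and the class extends. The paper simply cites \cite{KS}, Thm.\ 5.4(b) for this implication, whereas you partially unpack that black box via purity of the branch locus and a transversal-curve/Abhyankar argument before deferring to the same Kerz--Schmidt equivalence.
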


\begin{proof}
Let $\T_0$ be any connected component of a tame $\Z/p^r\Z$-torsor $\T$ on $X$. Then the morphism $\T_0\to X$ is curve-tame in the sense of \cite{KS}, \S4, and $\T_0$ is the normalization of $X$ in the abelian field extension of $p$-power degree $k(\T_0)/k(X)$.  By \cite{KS}, Thm.\ 5.4.\,(b), $\T_0\to X$ is numerically tamely ramified along $\bar X \smallsetminus X$. This means that the inertia groups in $\Gal(k(\T_0)/k(X))$ of all points $\bar x \in \bar X \smallsetminus X$ are of order prime to $p$, hence trivial. Therefore $\T_0$, and thus $\T$ extends to $\bar X$.
\end{proof}

\begin{corollary} \label{simplexex}
Let $\Delta^n=\Spec(k[T_0,\ldots,T_n]/\sum T_i=1)$ be the $n$-dimensional standard simplex over $k$ and let $A$ be an abelian group. Then
\[
H^1_t(\Delta^n,A)\cong H^1_\et(k,A).
\]
In particular, $H^1_t(\Delta^n,A)=0$ if $k$ is separably closed.
\end{corollary}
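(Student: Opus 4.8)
The plan is to realize the canonical pullback map $p^*\colon H^1_\et(k,A)\to H^1_t(\Delta^n,A)$, induced by the structure morphism $p\colon \Delta^n\to\Spec(k)$, as an isomorphism. First I would observe that $\Delta^n\cong\A^n_k$, since the relation $\sum T_i=1$ lets one eliminate a coordinate, and that $p^*$ indeed lands in the tame subgroup: for a regular curve $f\colon C\to\Delta^n$ the class $f^*p^*(a)$ is pulled back from $\Spec(k)$, hence corresponds to a constant field extension, which is everywhere unramified and in particular tame. Because étale cohomology commutes with filtered colimits of the coefficient group, I may assume $A$ is finitely generated; and since on a connected normal scheme $H^1_\et(-,\Z)=\Hom_{\mathrm{cont}}(\pi_1^\et,\Z)=0$ (a profinite group has no nontrivial continuous map to $\Z$), the free part of $A$ contributes nothing on either side, reducing me to $A$ finite. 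Writing $A=A_{p'}\oplus A_p$ as the sum of its prime-to-$p$ and $p$-primary parts (the latter empty if $p=0$) and using that both sides are additive in $A$ and compatible with this decomposition, I treat the two summands separately.

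For the prime-to-$p$ part $A_{p'}$, tame and étale cohomology coincide since $A_{p'}$ is $p$-torsion free, so it suffices to show $p^*\colon H^1_\et(k,A_{p'})\to H^1_\et(\A^n_k,A_{p'})$ is an isomorphism. This is the homotopy invariance of étale cohomology with finite coefficients of order prime to $\ch(k)$, applied $n$ times along $\A^n\to\A^{n-1}\to\cdots\to\Spec(k)$, and the resulting isomorphism is exactly the structure-map pullback.

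The $p$-primary part is the heart of the matter, and here I would exploit the compactification $\A^n_k\subset\P^n_k$. Since $\P^n_k$ is proper and regular, Proposition~\ref{numerical} yields $H^1_t(\A^n_k,\Z/p^r\Z)\cong H^1_\et(\P^n_k,\Z/p^r\Z)$, the isomorphism being restriction along the open immersion. It then remains to compute $H^1_\et(\P^n_k,\Z/p^r\Z)$, for which I would use the low-degree exact sequence of the Hochschild--Serre spectral sequence for $\P^n_k\to\Spec(k)$:
\[
0\to H^1\bigl(\Gal(k^{\sep}/k),\,H^0(\P^n_{k^{\sep}},\Z/p^r\Z)\bigr)\to H^1_\et(\P^n_k,\Z/p^r\Z)\to H^0\bigl(\Gal(k^{\sep}/k),\,H^1(\P^n_{k^{\sep}},\Z/p^r\Z)\bigr).
\]
As $\P^n_{k^{\sep}}$ is connected and simply connected, its geometric $H^0$ is $\Z/p^r\Z$ and its geometric $H^1$ vanishes, so inflation gives $H^1_\et(\P^n_k,\Z/p^r\Z)\cong H^1_\et(k,\Z/p^r\Z)$, again via the structure-map pullback.

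Composing the restriction $H^1_\et(\P^n_k,\Z/p^r\Z)\xrightarrow{\sim}H^1_t(\A^n_k,\Z/p^r\Z)$ with the inflation isomorphism factors $p^*$ through $\A^n\hookrightarrow\P^n\to\Spec(k)$, so the prime-to-$p$ and $p$-primary identifications assemble into the single canonical pullback isomorphism on all finite $A$, hence on all $A$; when $k$ is separably closed, $\Gal(k^{\sep}/k)=1$ forces $H^1_\et(k,A)=0$, giving the final assertion. I expect the $p$-primary part to be the genuine obstacle: in characteristic $p$ the affine space $\A^n$ carries abundant wildly ramified Artin--Schreier covers, making $H^1_\et(\A^n,\Z/p\Z)$ enormous, and the whole content is that tameness excludes precisely these. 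This exclusion is exactly what Proposition~\ref{numerical} encodes—numerically tame covers of $\A^n$ extend across $\P^n\smallsetminus\A^n$—so the substantive inputs are the numerical tameness theorem of \cite{KS} and the simple connectedness of projective space over a separably closed field, whereas the prime-to-$p$ part is a formal consequence of homotopy invariance.
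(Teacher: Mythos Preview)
Your argument is correct and follows essentially the same route as the paper: reduce to finite (cyclic) coefficients by a colimit argument and the vanishing of $H^1_\et(-,\Z)$, handle the prime-to-$p$ part by homotopy invariance of \'etale cohomology on $\A^n$, and handle the $p$-primary part by invoking Proposition~\ref{numerical} for the compactification $\A^n\subset\P^n$ together with $H^1_\et(\P^n_k,\Z/p^r\Z)\cong H^1_\et(k,\Z/p^r\Z)$. Your Hochschild--Serre justification of this last isomorphism and your explicit check that $p^*$ lands in the tame subgroup simply spell out steps the paper leaves implicit; the only small slip is that the colimit reduction needs tame (not just \'etale) cohomology to commute with filtered colimits of coefficients, which the paper asserts and uses as well.
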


\begin{proof}  Since tame cohomology commutes with direct limits of coefficients, and since $H^1_\et(\Delta^n,\Z)=0$,   we may assume that $A\cong \Z/m\Z$ for some $m\geq 1$.  If $p\nmid m$, we obtain:
\[
H^1_t(\Delta^n,\Z/m\Z)\cong H^1_\et(\A^n,\Z/m\Z)\cong H^1_\et(k,\Z/m\Z).
\]
If $p=\ch(k)>0$ and $m=p^r$, $r\geq 1$, Proposition~\ref{numerical} yields
\[
H^1_t(\Delta^n,\Z/p^r\Z) \cong H^1_t(\A^n,\Z/p^r\Z)\stackrel{\sim}{\leftarrow}H^1_t(\P^n,\Z/p^r\Z)= H^1_\et(\P^n,\Z/p^r\Z).
\]
Finally note that $H^1_\et(\P^n,\Z/p^r\Z)\cong H^1_\et(k,\Z/p^r\Z)$.
\end{proof}

In the following, let  $k$ be an algebraically closed field of characteristic $p\geq 0$ and let $X$ be a separated scheme of finite type over $k$. Let $H_i^S(X,\Z/m\Z)$ denote the mod-$m$ Suslin homology, i.e., the $i$-th homology group of the complex
\[
\Cor(\Delta^\bullet, X) \otimes \Z/m\Z.
\]
Let $A$ be an abelian group with $mA=0$. We are going to construct a pairing
\[
H_1^S(X,\Z/m\Z) \times H^1_t(X,A) \longrightarrow A
\]
as follows: let $\T\to X$ be a tame $A$-torsor representing a class in $H^1_t(X,A)$ and let $\alpha\in \Cor(\Delta^1,X)$ be a finite correspondence representing a $1$-cocycle in the mod-$m$ Suslin complex. Then
\[
\alpha^*(\T)
\]
is a torsor over $\Delta^1$. Since $\alpha$ is a cocycle modulo $m$, $(0^*- 1^*)(\alpha)$ is of the form $m \cdot z$ for some $z\in \Cor(\Delta^0,X)=\Z^{(X(k))}$. We therefore obtain a  canonical identification
\[
\Phi_{\taut}: 0^*(\alpha^*(\T)) \stackrel{\sim}{\lang} 1^*(\alpha^*(\T))
\]
of $A$-torsors over $\Delta^0=\Spec(k)$. Furthermore, by Corollary~\ref{simplexex}, the tame torsor $\alpha^*(\T)$ on $\Delta^1$ is trivial, hence a disjoint union of copies of $\Delta^1$. By parallel transport, we obtain another identification
\[
\Phi_{\para}: 0^*(\alpha^*(\T))\stackrel{\sim}{\lang}1^*(\alpha^*(\T)).
\]
Hence there is a unique $\gamma(\alpha, \T)\in A$ such that
\[
 \Phi_{\para}= \text{(translation by $\gamma(\alpha, \T)$)} \circ \Phi_{\taut}.
\]

\begin{proposition}\label{pairing}
The element  $\gamma(\alpha, \T)\in  A$ only depends on the class of\/ $\T$ in $H^1_t(X,A)$ and on the class of $\alpha$ in $H^S_1(X,\Z/m\Z)$. We obtain a bilinear pairing
\[
\langle \cdot , \cdot \rangle :  H^S_1(X,\Z/m\Z) \times H^1_t(X,A)   \longrightarrow A.
\]
\end{proposition}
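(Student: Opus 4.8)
The plan is to verify that the recipe $\gamma(\alpha,\T)$ descends to the two quotients in question, and that the resulting map is bilinear. First I would fix the torsor $\T$ and show that $\gamma(\alpha,\T)$ depends only on the class of $\alpha$ in $H_1^S(X,\Z/m\Z)$. This requires two things: (i) additivity in $\alpha$, namely $\gamma(\alpha_1+\alpha_2,\T)=\gamma(\alpha_1,\T)+\gamma(\alpha_2,\T)$, which follows from the additivity of $\alpha\mapsto\alpha^*(\T)$ recorded in the earlier lemmas together with the compatibility of $\Phi_{\taut}$ and $\Phi_{\para}$ with the ``$+$'' operation on torsors over $\Delta^0$ and $\Delta^1$; and (ii) vanishing on boundaries, i.e. $\gamma(\partial\beta,\T)=0$ for $\beta\in\Cor(\Delta^2,X)$ representing a $2$-chain, where $\partial\beta=\sum_i(-1)^i\partial_i^*\beta$.

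For step (ii) the key is Proposition~\ref{compo}: pulling back $\T$ along $\beta$ gives a tame torsor $\beta^*(\T)$ on $\Delta^2$, which is trivial by Corollary~\ref{simplexex} (as $k$ is algebraically closed, hence separably closed). The three face maps $\partial_i\colon\Delta^1\to\Delta^2$ satisfy $(\partial_i\circ\beta)^*(\T)\cong\partial_i^*(\beta^*(\T))$, so the torsors $(\partial\beta)^*(\T)$ on $\Delta^1$ are restrictions of the single trivial torsor $\beta^*(\T)$ on $\Delta^2$. Because the torsor on $\Delta^2$ is trivial, parallel transport along the three edges is consistent around the boundary of the $2$-simplex, and the alternating sum of the $\gamma$'s telescopes to $0$. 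I would phrase this cocycle-vanishing argument using the vertices $e_0,e_1,e_2$ of $\Delta^2$: trivializing $\beta^*(\T)$ fixes compatible identifications of all three fibres $e_i^*\beta^*(\T)$, and each $\gamma(\partial_i\beta,\T)$ measures the discrepancy between $\Phi_{\para}$ and $\Phi_{\taut}$ along one edge; the $\Phi_{\para}$ contributions cancel in pairs around the triangle and the $\Phi_{\taut}$ contributions cancel because the relevant vertices coincide.

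Next I would fix $\alpha$ and show independence of the representative $\T$ of its class in $H^1_t(X,A)$. If $\T\cong\T'$ as torsors then $\alpha^*(\T)\cong\alpha^*(\T')$ compatibly with both $\Phi_{\taut}$ and $\Phi_{\para}$, since all constructions were defined functorially up to canonical isomorphism, so $\gamma(\alpha,\T)=\gamma(\alpha,\T')$. For bilinearity in the torsor variable, I would use additivity of $\alpha\mapsto\alpha^*(\T)$ in $\T$ together with the natural isomorphisms (\ref{eq1}) and (\ref{eq2}): the identity $\alpha^*(\T_1+\T_2)\cong\alpha^*(\T_1)+\alpha^*(\T_2)$ transports both the tautological and the parallel-transport identifications to the corresponding sums, so $\gamma(\alpha,\T_1+\T_2)=\gamma(\alpha,\T_1)+\gamma(\alpha,\T_2)$ and likewise $\gamma(\alpha,-\T)=-\gamma(\alpha,\T)$.

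The main obstacle will be step (ii), the vanishing on boundaries, since it is the only place where the homological (rather than merely additive) structure enters. The subtlety is that $\Phi_{\para}$ is defined by trivializing over each edge separately, so I must argue carefully that these three trivializations are induced by one global trivialization of $\beta^*(\T)$ on $\Delta^2$, and that the $\Phi_{\taut}$ identifications at the shared vertices are compatible with the signs in the simplicial boundary. Once the bookkeeping of signs and vertex-matchings is set up correctly, the triviality of $\beta^*(\T)$ forces the alternating sum to vanish; I would keep this computation at the level of the four vertex-fibres and the parallel transport isomorphisms between them, rather than unwinding correspondences explicitly.
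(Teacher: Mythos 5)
Your proposal is correct and follows essentially the same route as the paper: reduce mod-$m$ additivity to the natural isomorphism $(\alpha+m\beta)^*(\T)\cong\alpha^*(\T)$, and handle boundaries by using Proposition~\ref{compo} to rewrite $\langle\partial^*(\Phi),\T\rangle$ as a pairing against $\Phi^*(\T)$ on $\Delta^2$, which is trivial by Corollary~\ref{simplexex}. The paper compresses your final telescoping of vertex-fibres into the single assertion $\langle\partial,\Phi^*(\T)\rangle=0$ for the trivialized torsor, but the content is the same.
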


\begin{proof}
Replacing $\T$ by another torsor isomorphic to $\T$ does not change anything. The nontrivial statement is that $\langle \alpha, \T\rangle$ only depends on the class of $\alpha$ in $H^S_1(X,\Z/m\Z)$. For $\beta\in \Cor(\Delta^1,X)$, we have
\[
\langle \alpha + m\beta, \T \rangle = \langle \alpha, \T\rangle + m \langle \beta,\T\rangle = \langle\alpha, \T \rangle.
\]
It therefore remains to show that
\[
\langle \partial^*(\Phi),\T \rangle=0,
\]
for all $\Phi \in \Cor(\Delta^2,X)$, where $\partial_i: \Delta^1 \to \Delta^2 $, $i=0,1,2$, are the face maps and $\partial^*(\Phi)= \Phi\circ \partial_0 -  \Phi \circ\partial_1 + \Phi\circ\partial_2$. Considering $\partial=\partial_0-\partial_1+\partial_2$ as a finite correspondence from $\Delta^1$ to $\Delta^2$, it represents a cocycle in the singular complex $\Cor(\Delta^\bullet,\Delta^2)$. Proposition~\ref{compo} implies that
\[
\langle \partial^*(\Phi),\T \rangle=\langle \Phi\circ \partial,\T \rangle= \langle \partial, \Phi^*(\T) \rangle.
\]
By Corollary~\ref{simplexex}, the tame torsor $\Phi^*(\T)$ is trivial on $\Delta^2$. Hence $\langle \partial, \Phi^*(\T) \rangle=0$.
\end{proof}

\bigskip
In the following, we use the notation $\pi_1^{t,\ab}(X):=H^1_{t}(X,\Q/\Z)^*$. If $X$ is connected, then $\pi_1^{t,\ab}(X)$ is the abelianized (curve-)tame fundamental group of $X$, see \cite{KS}, \S 4.

\begin{definition}
For $m\ge 1$ we define
\[
\rec_X: H_1^S(X,\Z/m\Z) \lang \pi_1^{t,\ab}(X)/m
\]
as the homomorphism induced by the pairing of Proposition \ref{pairing} for $A=\Z/m\Z$  combined with the isomorphism $H^1_t(X,\Z/m\Z)^*\cong \pi_1^{t,\ab}(X)/m$.
\end{definition}

The statement of the next lemma immediately follows from the definition of $\rec$.

\begin{lemma}
Let $f: X' \to X$ be a morphism of separated schemes of finite type over $k$. Then the induced diagram
\[
\begin{tikzpicture}
\matrix (m) [matrix of math nodes, row sep=2em,
column sep=2em, text height=2.5ex, text depth=1.25ex]
{
H_1^S(X',\Z/m\Z) & \pi_1^{t,\ab}(X')/m\\
H_1^S(X,\Z/m\Z) & \pi_1^{t,\ab}(X)/m\\
};
\path[->,font=\scriptsize] (m-1-1) edge node[auto]{$\rec_{X'}$} (m-1-2);
\path[->,font=\scriptsize] (m-1-1) edge node[auto]{$f_*$} (m-2-1);
\path[->,font=\scriptsize] (m-1-2) edge node[auto]{$f_*$} (m-2-2);
\path[->,font=\scriptsize] (m-2-1) edge node[auto]{$\rec_X$}(m-2-2);
\end{tikzpicture}
\]
commutes.
\end{lemma}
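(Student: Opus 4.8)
The plan is to unwind $\rec$ into the pairing and reduce commutativity of the square to a projection formula. By definition, $\rec_X(a)$ is the linear functional $b \mapsto \langle a, b\rangle$ on $H^1_t(X,\Z/m\Z)$, under the identification $\pi_1^{t,\ab}(X)/m \cong H^1_t(X,\Z/m\Z)^*$. The right-hand vertical map $f_*: \pi_1^{t,\ab}(X')/m \to \pi_1^{t,\ab}(X)/m$ is, by construction of the functoriality of $\pi_1^{t,\ab}$, the $\Z/m\Z$-dual of the pullback $f^*: H^1_t(X,\Z/m\Z) \to H^1_t(X',\Z/m\Z)$ (which lands in tame classes by the very definition of curve-tameness). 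Chasing the two composites around the diagram, commutativity is therefore equivalent to the identity
\[
\langle f_* a', b\rangle_X = \langle a', f^* b\rangle_{X'}
\]
for all $a' \in H_1^S(X',\Z/m\Z)$ and $b \in H^1_t(X,\Z/m\Z)$.

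To prove this projection formula I would work with representatives. Write $a'$ as the class of a cocycle $\alpha' \in \Cor(\Delta^1, X')$ and $b$ as the class of a tame $\Z/m\Z$-torsor $\T$ on $X$. The pushforward $f_* a'$ is represented by the composite $[f]\circ\alpha' \in \Cor(\Delta^1, X)$, where $[f]\in\Cor(X', X)$ is the graph of $f$. Since the graph projects isomorphically to $X'$ and via $f$ to $X$, the induced operation $[f]^*$ on torsors is just the ordinary pullback $f^*$. Invoking Proposition~\ref{compo} (or, when $X'$ fails to be regular so that the composition law of correspondences does not directly apply, the analogous identity obtained from the projection formula for the push-forward of torsors along the finite map underlying $(\id\times f)_*$) gives a natural isomorphism
\[
(f_*\alpha')^*(\T) = ([f]\circ\alpha')^*(\T) \cong (\alpha')^*(f^*\T)
\]
in $\Tor(\Delta^1, \Z/m\Z)$.

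Finally I would conclude by comparing the two recipes for $\gamma$. The number $\gamma(\alpha, \T)$ depends only on the torsor $\alpha^*(\T)$ on $\Delta^1$ together with the tautological identification $\Phi_{\taut}$ over the two endpoints and the parallel-transport identification $\Phi_{\para}$; both are defined functorially from the torsor, hence are carried into one another by the natural isomorphism above. Consequently $\gamma(f_*\alpha', \T) = \gamma(\alpha', f^*\T)$, i.e.\ $\langle f_* a', b\rangle_X = \langle a', f^*b\rangle_{X'}$, as required. I expect the only point requiring genuine care to be this compatibility of $\Phi_{\taut}$ and $\Phi_{\para}$ with the isomorphism of Proposition~\ref{compo}, together with the mild bookkeeping needed when $X'$ is singular, where $[f]\circ\alpha'$ must be read as a cycle-theoretic push-forward rather than through the composition law of correspondences; both are formal, which is why the statement indeed follows immediately.
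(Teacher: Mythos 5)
Your proposal is correct and follows the same route the paper intends: the paper offers no written proof, stating only that the lemma ``immediately follows from the definition of $\rec$,'' and your unwinding --- reducing commutativity to the projection formula $\langle f_*a',b\rangle_X=\langle a',f^*b\rangle_{X'}$, identifying $f_*\alpha'$ with $[f]\circ\alpha'$, and checking that $\Phi_{\taut}$ and $\Phi_{\para}$ are respected by the natural isomorphism $([f]\circ\alpha')^*(\T)\cong(\alpha')^*(f^*\T)$ --- is exactly the content being taken for granted. You also correctly flag the only genuine subtlety (that Proposition~\ref{compo} as stated assumes regularity of $X'$, so for singular $X'$ one must argue directly with the cycle-theoretic push-forward), which is handled by the same ``straightforward but lengthy'' unfolding of the definitions.
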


\section{Rigid \v{C}ech complexes} \label{chech-sect}

We consider \'{e}tale sheaves $F$ on the category $\Sch/k$ of separated schemes of finite type over a field  $k$. By a result of M.\ Artin, \v{C}ech cohomology $\check H^\bullet(X,F)$ and sheaf cohomology $H^\bullet_\et(X,F)$ coincide in degree $\leq 1$ and in arbitrary degree if $X$ is quasi-projective (cf.\ \cite{Mi}, III Thm.\ 2.17).
Comparing the \v{C}ech complex  for a covering $\U$ and that for a finer covering $\V$, the refinement homomorphism
\[
\check C^\bullet (\U, F) \lang \check C^\bullet (\V, F)
\]
is canonical only up to chain homotopy and hence only the induced map $\check H^\bullet (\U, F)\allowbreak \to \check H^\bullet (\V, F)$ is well-defined.
    We can remedy this problem in the spirit of Friedlander \cite{Fr}, chap.4, by using rigid coverings:

We fix an algebraic closure  $\bar k /k$.  A \emph{rigid \'{e}tale covering} $\mathcal{U}$ of $X$ is a family of pointed  separated \'{e}tale morphisms
\[
(U_x, u_x) \lang (X, x), \quad x \in X(\bar k),
\]
with $U_x$ connected and  $u_x \in U_x(\bar k)$ mapping to $x$. For an \'{e}tale sheaf $F$ the rigid  \v{C}ech complex is defined by
\[
\check C^\bullet(\U, F): \quad \check C^n(\U, F)= \prod_{(x_0,\ldots,x_n)\in X(\bar k)^{n+1}} \Gamma (U_{x_0}\times_X \cdots \times_X U_{x_n}, F)
\]
with the usual differentials. It is clear what it means for a rigid covering  $\mathcal V$ to be a  refinement of $\mathcal U$. Because the marked points map to each other, there is exactly one refinement morphism, hence we obtain a canonical refinement morphism on the level of complexes
\[
\check C^\bullet(\U, F) \to \check C^\bullet(\V, F).
\]
The set of rigid coverings is cofiltered (form the fibre product for each $x\in X(\bar k)$ and restrict to the connected components of the marked points). Therefore we can define the rigid \v{C}ech complex of $X$ with values in $F$ as the filtered direct limit
\[
\check C^\bullet(X, F) := \varinjlim_\U \check C^\bullet(\U, F),
\]
where $\U$ runs through all rigid coverings of $X$. Note that the rigid \v Cech complex depends on the structure morphism $X\to k$ and not merely on the scheme $X$.

Forgetting the marking, we can view a rigid covering as a usual covering. Every covering can be refined by a covering which arises by forgetting the marking of a rigid covering. Hence the cohomology of the rigid \v{C}ech complex coincides with the usual \v{C}ech cohomology of $X$ with values in $F$.

\medskip
For a morphism $f: Y \to X$ and a rigid  \v{C}ech covering $\U/X$, we obtain a rigid  \v{C}ech covering  $f^*\U/Y$ by taking base extension to $Y$ and restricting to the connected components of the marked points, and in the limit we obtain a homomorphism
\[
f^*: \check C^\bullet(X, F) \lang \check C^\bullet(Y,F).
\]

\begin{lemma}\label{quasifinite}
If\/ $\pi: Y\to X$ is quasi-finite, then the rigid coverings of the form $\pi^*\U$ are cofinal among the rigid coverings of\/ $Y$.
\end{lemma}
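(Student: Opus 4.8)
The plan is to verify the cofinality condition directly. Given an arbitrary rigid covering $\V=\{(V_y,v_y)\to(Y,y)\}_{y\in Y(\bar k)}$ of $Y$, I will construct a rigid covering $\U=\{(U_x,u_x)\to(X,x)\}_{x\in X(\bar k)}$ of $X$ such that $\pi^*\U$ refines $\V$. Since the refinement morphism between two rigid coverings is unique when it exists (the marked points determine it), it suffices to produce, for every $y\in Y(\bar k)$ lying over $x:=\pi\circ y\in X(\bar k)$, a pointed $Y$-morphism from the connected component of $U_x\times_X Y$ through the marked point to $V_y$. Here quasi-finiteness enters a first time: each fibre $\pi^{-1}(x)\cap Y(\bar k)=\{y_1,\dots,y_r\}$ is finite, so for a fixed $x$ only finitely many conditions must be met, and once each $y_i$ is handled by some étale neighbourhood $U_i$ of $\bar x$, I may take $U_x$ to be the connected component through $\bar x$ of the fibre product $U_1\times_X\cdots\times_X U_r$. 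This reduces the lemma to a single statement for one point $y$ over $x$.

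Next I pass to strict henselizations. Write $A=\mathcal O^{\mathrm{sh}}_{X,\bar x}$, a strictly henselian local ring, and consider $Y_A:=Y\times_X\Spec A$, which is quasi-finite and separated over $\Spec A$, with $y$ isolated in its closed fibre. By the structure theorem for quasi-finite separated morphisms over a henselian local ring (a henselian form of Zariski's Main Theorem; cf.\ \cite{Mi}, Ch.\ I), there is a clopen subscheme $\Spec B\subseteq Y_A$ having $y$ as its only point over the closed point of $\Spec A$ and finite over $A$. The ring $B$ is local (finite over a henselian local ring), and its residue field $\kappa(y)$ is a finite, necessarily purely inseparable, extension of the separably closed residue field of $A$. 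A purely inseparable extension of a separably closed field is again separably closed, so $\kappa(y)$ is separably closed and $B$ is strictly henselian; consequently $B=\mathcal O^{\mathrm{sh}}_{Y,y}$.

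The crucial point is now that $\mathcal O^{\mathrm{sh}}_{Y,y}=B$ equals the filtered colimit $\varinjlim_U \Gamma(W_U)$, where $U$ runs over the pointed connected étale neighbourhoods of $\bar x$ in $X$ and $W_U$ is the connected component through the marked point of $U\times_X Y$. Indeed, $\Spec A=\varinjlim_U U$, the clopen splitting $\Spec B\subseteq Y_A$ is a finitely presented datum and hence already defined over some neighbourhood $U_0$ by a standard limit argument, so that for $U$ refining $U_0$ the component $W_U$ is cut out by the base change of $\Spec B$; passing to the limit gives $\varinjlim_U \Gamma(W_U)=B$. Since $V_y\to Y$ is étale of finite presentation, the canonical map $\Gamma(V_y)\to\mathcal O^{\mathrm{sh}}_{Y,y}=\varinjlim_U\Gamma(W_U)$ factors through some $\Gamma(W_U)$, and the resulting $Y$-morphism $W_U\to V_y$ respects the marked points, providing the required refinement.

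The main obstacle is the local structure step of the second paragraph: extracting the finite local clopen piece of $Y_A$ at $y$ and recognizing that it is already strictly henselian. This is exactly where quasi-finiteness is indispensable. Without it the pullback neighbourhoods need not be cofinal: for the projection $\A^2\to\A^1$ a neighbourhood of the origin realizing $\sqrt{1+t}$ is dominated by no pullback neighbourhood, since such neighbourhoods are ``constant in the $t$-direction''. The residue-field remark, which guarantees separable-closedness and hence the exact equality $B=\mathcal O^{\mathrm{sh}}_{Y,y}$, is the small technical subtlety that upgrades the colimit identification from ``cofinal up to further étale refinement'' to genuine cofinality.
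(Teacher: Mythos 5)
Your proof is correct and rests on exactly the same key fact as the paper's: the paper's one-sentence proof just cites the structure theorem for quasi-finite separated schemes over a henselian local ring (Milne, I, Thm.\ 4.2) and declares the lemma an immediate consequence, and your second paragraph is precisely that theorem, with the surrounding paragraphs supplying the standard reduction to a single fibre point and the finite-presentation limit arguments that the paper leaves implicit. No gaps worth flagging.
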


\begin{proof}
This is an immediate consequence of the fact that a quasi-finite and separated scheme $Y$ over the spectrum $X$ of a henselian ring is of the form $Y= Y_0 \sqcup Y_1 \sqcup \ldots \sqcup Y_r$ with $Y_0\to X$ not surjective and $Y_i\to X$ finite surjective with $Y_i$ the spectrum of a henselian ring, $i=1,\ldots,r$, cf.\ \cite{Mi}, I,\ Thm.\ 4.2.
\end{proof}

\begin{lemma}
If $F$ is $\qfh$-sheaf on  $\Sch/k$, then for any $n\geq 0$ the presheaf $\underline{\check C}^n(- , F)$ given by
\[
X \longmapsto \check C^n(X,F)
\]
is a $\qfh$-sheaf. The obvious sequence
\[
0 \to F \to \underline{\check C}^0(- , F) \to \underline{\check C}^1(- , F) \to \underline{\check C}^2(- , F) \to \cdots
\]
is exact as a sequence of \'{e}tale (and hence also of $\qfh$) sheaves.
\end{lemma}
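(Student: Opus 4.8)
The plan is to extract first a single structural description of $\check C^n(X,F)$ and then use it for both assertions. Writing out $\check C^n(X,F)=\varinjlim_{\U}\prod_{(x_0,\dots,x_n)}\Gamma(U_{x_0}\times_X\cdots\times_X U_{x_n},F)$, I would interchange the product over tuples with the filtered colimit over rigid coverings. This is legitimate because the poset of rigid coverings is the product over $x\in X(\bar k)$ of the posets of pointed connected \'{e}tale neighbourhoods of $x$: any family of local representatives assembles into one covering, and two elements agreeing in every coordinate agree after a common refinement. Passing to the limit of sections, this yields the Godement-type formula
\[
\check C^n(X,F)\;\cong\;\prod_{(x_0,\dots,x_n)\in X(\bar k)^{n+1}}\Gamma\bigl(S_{x_0}\times_X\cdots\times_X S_{x_n},\,F\bigr),
\]
where $S_{x}=\Spec\mathcal O^{\mathrm{sh}}_{X,x}$ is the strict henselization at $x$; for $n=0$ this is the classical Godement term $\prod_{x}F_{\bar x}$.

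For the exactness I would check it on stalks. By the same colimit computation (and the fact, as in Lemma~\ref{quasifinite}, that \'{e}tale data over $S=\Spec\mathcal O^{\mathrm{sh}}_{X,\bar x}$ descends to a finite-type \'{e}tale neighbourhood), the stalk of $\underline{\check C}^\bullet(-,F)$ at a geometric point $\bar x$ is the rigid \v{C}ech complex $\check C^\bullet(S,F)$ of the strictly henselian local scheme $S$, with $F$ extended by $F(S)=F_{\bar x}$. Over such an $S$ every pointed connected \'{e}tale neighbourhood admits a unique section through its marked point (the defining property of strict henselization, cf.\ \cite{Mi}, I), so the trivial covering $\U_{\mathrm{triv}}$ with $U_x=S$ for all $x\in S(\bar k)$ refines every rigid covering and is therefore cofinal. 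Hence $\check C^n(S,F)=\prod_{(x_0,\dots,x_n)\in S(\bar k)^{n+1}}F_{\bar x}$, which is the cochain complex of the codiscrete simplicial set on the nonempty set $S(\bar k)$ with coefficients in $F_{\bar x}$. Augmented by the constant maps it is contractible, a contracting homotopy being $h(a)(x_0,\dots,x_{n-1})=a(\bar x,x_0,\dots,x_{n-1})$, so the augmented stalk complex is exact. This proves exactness as \'{e}tale sheaves; exactness as $\qfh$-sheaves then follows since all terms are $\qfh$-sheaves and $\qfh$-sheafification is exact.

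For the $\qfh$-sheaf property I would argue directly from the Godement formula. A presheaf is a $\qfh$-sheaf precisely when it is an \'{e}tale sheaf and descends along the remaining $\qfh$-generators (finite surjective morphisms), so for a $\qfh$-covering $Y\to X$ I must identify $\check C^n(X,F)$ with the equalizer of $\check C^n(Y,F)\rightrightarrows\check C^n(Y\times_X Y,F)$. The ingredients are that $F$ is a $\qfh$-sheaf, so each value $\Gamma(S_{x_0}\times_X\cdots\times_X S_{x_n},F)$ carries $\qfh$-descent (\cite{SV}); that arbitrary products of $\qfh$-sheaves are again $\qfh$-sheaves; and that the $\qfh$-site is coherent, so the filtered colimit over coverings preserves the sheaf property. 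With these in hand the descent statement reduces to the behaviour of the indexing sets and of the fibre products of henselizations under the surjection $Y\to X$.

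The clean part is the exactness: once the stalk is identified with the \v{C}ech complex of the henselization, the cofinal trivial covering collapses everything to a contractible codiscrete complex. I expect the main obstacle to be the $\qfh$-descent of the last paragraph, where both the index set $X(\bar k)^{n+1}$ and the coverings vary with $X$; matching $\prod_{X(\bar k)^{n+1}}$ with the equalizer of $\prod_{Y(\bar k)^{n+1}}$ and $\prod_{(Y\times_X Y)(\bar k)^{n+1}}$ is not a single formal manipulation but requires Voevodsky's structure theory of $\qfh$-local rings and careful bookkeeping of the fibres of $Y(\bar k)\to X(\bar k)$.
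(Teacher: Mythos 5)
Your treatment of the exactness statement is essentially sound: the contracting homotopy $h(a)(x_0,\dots,x_{n-1})=a(\bar x,x_0,\dots,x_{n-1})$ is exactly what makes the augmented stalk complex contractible, and this is all that the paper's one-line ``by considering stalks'' amounts to. Be aware, though, that your identification of the stalk at $\bar x$ with $\check C^\bullet(S,F)$ is not literally correct: the stalk is $\varinjlim_W\check C^n(W,F)$ over \'{e}tale neighbourhoods $W$ of $\bar x$, and here the index sets $W(\bar k)^{n+1}$ vary with $W$ --- the transition maps $W'(\bar k)\to W(\bar k)$ are in general neither injective nor surjective --- so this colimit of products over varying index sets is not a product over any limiting index set (your interchange of product and colimit for a \emph{fixed} $X$ is fine, because there the rigid coverings form a product poset over the fixed set $X(\bar k)$; that argument does not apply to the stalk). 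The repair is to run your homotopy directly on a representative $c\in\check C^n(\U,F)$ after shrinking $W$ into the covering member $U_{\bar x}$, where the diagonal component of $U_{\bar x}\times_W U_{\bar x}$ provides the needed section; the formula is unchanged.

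The genuine gap is in the qfh-sheaf property, and you flag it yourself. Verifying descent for $\pi\colon Y\to X$ termwise through the Godement formula founders exactly where you say it does: the index set $X(\bar k)^{n+1}$ and the coverings both change when you pass to $Y$ and to $Y\times_X Y$, and there is no direct way to match $\prod_{X(\bar k)^{n+1}}$ against the equalizer of the two other products. The missing idea --- and the reason Lemma~\ref{quasifinite} is stated --- is to reindex before comparing: since $\pi$ is quasi-finite, the rigid coverings of $Y$ of the form $\pi^*\U$, and of $Y\times_X Y$ of the form $\Pi^*\U$ (with $\Pi\colon Y\times_X Y\to X$ the projection), are cofinal, so all three \v{C}ech groups become filtered colimits over the \emph{single} poset of rigid coverings $\U$ of $X$. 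Filtered colimits commute with equalizers, so one is reduced to showing that $\check C^n(\U,F)\to\check C^n(\pi^*\U,F)\rightrightarrows\check C^n(\Pi^*\U,F)$ is an equalizer for one sufficiently small $\U$; grouping the factors of $\check C^n(\pi^*\U,F)$ according to the images of the tuples $(y_0,\dots,y_n)$ in $X(\bar k)^{n+1}$, this is precisely the sheaf condition for $F$ with respect to the qfh-coverings $\coprod V_{y_0}\times_Y\cdots\times_Y V_{y_n}\to U_{x_0}\times_X\cdots\times_X U_{x_n}$. Without this reindexing step your proof of the first assertion does not close, and no appeal to the structure theory of qfh-local rings is needed once it is in place.
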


\begin{proof}
We show that each $\underline{\check C}^n(- , F)$ is a $\qfh$-sheaf. For this, let $\pi: Y \to X$ be a $\qfh$-covering, i.e., a quasi-finite  universal topological epimorphism.
We denote the projection by $\Pi: Y\times_X Y \to X$. By Lemma~\ref{quasifinite}, we have to show that the sequence
\[
\varinjlim_\U \check C^n(\U, F) \to \varinjlim_\U \check C^n(\pi^*\U, F) \rightrightarrows \varinjlim_\U \check C^n(\Pi^*\U, F )
\]
is an equalizer, where $\U$ runs through the  rigid coverings of $X$. Since filtered colimits commute with finite limits, it suffices to show the exactness for a single, sufficiently small $\U$.  This, however, follows from the assumption that $F$ is a $\qfh$-sheaf.

Finally, the exactness of\/ $0 \to F \to \underline{\check C}^0(- , F) \to \underline{\check C}^1(- , F) \to  \cdots $ as a sequence of \'{e}tale sheaves follows by considering stalks.
\end{proof}

\noindent
Being $\qfh$-sheaves, the sheaves $F$ and $\underline{\check C}^n(- , F)$ admit transfer maps, see \cite{SV},\,\S5. For later use, we make the relation between the transfers of $F$ and of $\underline{\check C}^n(- , F)$ explicit:
Let $Z$ be integral, $X$ regular and $\pi: Z\to X$ finite and surjective. Let $F$ be a $\qfh$-sheaf on $\Sch/k$.
For $x\in X(\bar k)$ we have
\[
X_x^{sh}\times_X Z = \coprod_{z\in \pi^{-1}(x)} Z_z^{sh},
\]
where $\pi^{-1}(x)$ denotes the set of morphisms $z: \Spec(\bar k) \to Z$ with $\pi\circ z= x$. For sufficiently small \'{e}tale $(U_x,u_x) \to (X,x)$, the set of connected components of $U_x \times_X Z$ is in 1-1-correspondence with the set $\pi^{-1}(x)$, and to each family of \'{e}tale morphisms
\[
(V_z,v_z) \lang (Z,z), \quad z \in \pi^{-1}(x),
\]
there is (after possibly making $U_x$ smaller) a unique morphism
\[
U_x \times_X Z \lang \coprod_{z\in \pi^{-1}(x)} V_z,
\]
over $Z$, which sends the connected component associated with $z$ of $U_x \times_X Z$ to $V_z$, and the point $(u_x,z)$ to $v_z$.

\medskip
In this way we obtain, for finitely many points $(x_0,\ldots,x_n)$, $n\geq 0$, and for every family
\[
(V_{z_i,v_{z_i}}) \lang (Z,z_i), \quad z_i \in \pi^{-1}(x_i),
\]
and sufficiently small chosen
\[
(U_{x_i},u_{x_i}) \lang (X,x_i),\quad i=0,\ldots,n,
\]
a homomorphism
\[
\prod_{\substack{(z_0,\ldots,z_n)\\z_i\in \pi^{-1}(x_i)}}\Gamma\big( V_{z_0} \times_Z \cdots \times_Z V_{z_n},F\big) \lang \Gamma\big(U_{x_0} \times_X \cdots \times_X U_{x_n} \times_X Z, F\big).
\]
Since $F$ is a $\qfh$-sheaf, we can compose this with the transfer map associated with the finite morphism
\[
U_{x_0} \times_X \cdots \times_X U_{x_n} \times_X Z \to U_{x_0} \times_X \cdots \times_X U_{x_n}.
\]
Forming for fixed $n$ the product over all  $(x_0,\ldots,x_n)\in X(\bar k)^{n+1}$ and passing to the limit over all rigid coverings, we obtain the transfer homomorphism
\[
\pi_*: \check C^\bullet(Z,F) \lang \check C^\bullet (X,F).
\]
Passing to cohomology, we obtain the usual transfer on \'{e}tale cohomology in degree $0$ and $1$, and in any degree if the schemes are quasi-projective.

\bigskip

Next we give the pairing
\[
\langle \cdot , \cdot \rangle :  H^S_1(X,\Z/m\Z) \times H^1_t(X,A)   \longrightarrow A.
\]
constructed in Proposition~\ref{pairing} for $k$ algebraically closed and an abelian group $A$ with $mA=0$ the following interpretation in terms of the rigid \v{C}ech complex:

\medskip\noindent
Let $a\in H_1^S(X,\Z/m\Z)$ and $b\in H^1_t(X,A)$ be given, and let $\alpha\in \Cor_k(\Delta^1,X)$ and $\beta \in \ker(\check C^1(X,A) \stackrel{d}{\to} \check C^2(X,A))$ be representing elements. Note that $(0^*-1^*)(\alpha)\in m \Cor(\Delta^0,X)$ by assumption.  Consider the diagram

\[
\begin{tikzpicture}
\matrix (m) [matrix of math nodes, row sep=2em,
column sep=2em, text height=2.5ex, text depth=1.25ex]
{& \check C^0(X,A) &  \check C^1(X,A) & \check C^2(X,A) \\
& \check C^0(\Delta^1,A) &  \check C^1(\Delta^1,A) & \check C^2(\Delta^1,A) \\
A& \check C^0(\Delta^0,A) &  \check C^1(\Delta^0,A) & \check C^2(\Delta^0,A) \\
};
\path[->,font=\scriptsize] (m-1-2) edge node[auto]{$d$} (m-1-3);
\path[->,font=\scriptsize] (m-1-3) edge node[auto]{$d$} (m-1-4);
\path[->,font=\scriptsize] (m-2-2) edge node[auto]{$d$} (m-2-3);
\path[->,font=\scriptsize] (m-2-3) edge node[auto]{$d$} (m-2-4);
\path[right hook->,font=\scriptsize] (m-3-1) edge node[auto]{} (m-3-2);
\path[->,font=\scriptsize] (m-3-2) edge node[auto]{$d$} (m-3-3);
\path[->,font=\scriptsize] (m-3-3) edge node[auto]{$d$} (m-3-4);
\path[->,font=\scriptsize] (m-1-2) edge node[auto]{$\alpha^*$} (m-2-2);
\path[->,font=\scriptsize] (m-1-3) edge node[auto]{$\alpha^*$} (m-2-3);
\path[->,font=\scriptsize] (m-1-4) edge node[auto]{$\alpha^*$} (m-2-4);
\path[->,font=\scriptsize] (m-2-2) edge node[auto]{$0^*-1^*$} (m-3-2);
\path[->,font=\scriptsize] (m-2-3) edge node[auto]{$0^*-1^*$} (m-3-3);
\path[->,font=\scriptsize] (m-2-4) edge node[auto]{$0^*-1^*$} (m-3-4);
\end{tikzpicture}
\]
Since $\beta$ represents a tame torsor $\T$ on $X$, $\alpha^*(\beta)$ represents the torsor $\alpha^*(\T)$, which is tame by Lemma~\ref{tametotame}. By Corollary~\ref{simplexex}, there exists $\gamma \in \check C^0(\Delta^1,A)$ with $d \gamma= \alpha^*(\beta)$. Since
\[
d(0^*-1^*)(\gamma)= (0^*-1^*)\alpha^*(\beta)=0,
\]
we conclude that $(0^*-1^*)(\gamma)$ lies in
\[
A = H^0(\Delta^0,A)= \ker (\check C^0(\Delta^0,A)\stackrel{d}{\to} \check C^1(\Delta^0,A)).
\]
It is easy to verify that the assignment
\[
\langle \cdot , \cdot \rangle: (a,b) \longmapsto  (0^*-1^*)(\gamma) \in A
\]
does not depend on the choices made.
By the explicit geometric relation between \v{C}ech 1-cocycles and torsors, and since our construction of finite push-forwards of torsors is compatible with the construction of transfers for $\qfh$-sheaves given in \cite{SV}, \S5, we see that the pairing constructed above coincides with the one constructed in Proposition~\ref{pairing}.

Finally, let
\begin{equation} \label{neq3}
A \ \hookrightarrow \ I^0 \to I^1 \to I^2
\end{equation}
be a (partial) injective resolution of the constant sheaf $A$ in the category of $\Z/m\Z$-module sheaves on $(\Sch/k)_{\qfh}$. Let $\phi: (\Sch/k)_{\qfh} \to  (\Sch/k)_{\et}$ denote the natural map of sites. Since $\phi^*$ is exact, $\phi_*$ sends injective sheaves to injective sheaves. By \cite{SV}, Thm.\ 10.2, we have $R^0\phi_*(A)=A$ and $R^i\phi_*(A)=0$ for $i\geq 1$. Hence \eqref{neq3} is also a partial resolution of $A$ by injective, \'{e}tale sheaves of $\Z/m\Z$-modules. We choose a quasi-isomorphism
\[
[0\to \underline{\check C}^0(- , A) \to \underline{\check C}^1(- , A) \to \underline{\check C}^2(- , A)] \lang  [0\to I^0 \to I^1 \to I^2]
\]
of truncated complexes of $\qfh$-sheaves. Since \v Cech- and \'{e}tale cohomology agree in dimension $\leq 1$, the induced map on global sections is a quasi-isomorphism of truncated complexes of abelian groups. Hence the pairing of Proposition~\ref{pairing} can also be obtained by the same procedure as above but using the diagram
\[
\begin{tikzpicture}
\matrix (m) [matrix of math nodes, row sep=2em,
column sep=2em, text height=2.5ex, text depth=1.25ex]
{& I^0(X) &   I^1(X) &  I^2(X) \\
&  I^0(\Delta^1) &   I^1(\Delta^1) & I^2(\Delta^1) \\
A& I^0(\Delta^0) &  I^1(\Delta^0) & I^2(\Delta^0). \\
};
\path[->,font=\scriptsize] (m-1-2) edge node[auto]{$d$} (m-1-3);
\path[->,font=\scriptsize] (m-1-3) edge node[auto]{$d$} (m-1-4);
\path[->,font=\scriptsize] (m-2-2) edge node[auto]{$d$} (m-2-3);
\path[->,font=\scriptsize] (m-2-3) edge node[auto]{$d$} (m-2-4);
\path[right hook->,font=\scriptsize] (m-3-1) edge node[auto]{} (m-3-2);
\path[->,font=\scriptsize] (m-3-2) edge node[auto]{$d$} (m-3-3);
\path[->,font=\scriptsize] (m-3-3) edge node[auto]{$d$} (m-3-4);
\path[->,font=\scriptsize] (m-1-2) edge node[auto]{$\alpha^*$} (m-2-2);
\path[->,font=\scriptsize] (m-1-3) edge node[auto]{$\alpha^*$} (m-2-3);
\path[->,font=\scriptsize] (m-1-4) edge node[auto]{$\alpha^*$} (m-2-4);
\path[->,font=\scriptsize] (m-2-2) edge node[auto]{$0^*-1^*$} (m-3-2);
\path[->,font=\scriptsize] (m-2-3) edge node[auto]{$0^*-1^*$} (m-3-3);
\path[->,font=\scriptsize] (m-2-4) edge node[auto]{$0^*-1^*$} (m-3-4);
\end{tikzpicture}
\]
By \cite{SV}, Theorem~10.7, the same argument applies with a partial injective resolution of the constant sheaf $A$ in the category of $\Z/m\Z$-module sheaves on $(\Sch/k)_{h}$.

\section{The case of smooth curves} \label{curvesect}
In this section we prove Theorem~\ref{main} in the case that $X=C$ is a smooth curve.

\medskip
Let $k$ be an algebraically closed field of characteristic $p\geq 0$, and let $C$ be a smooth, but not necessarily projective, curve over $k$. Let the semi-abelian variety $\cA$ be the generalized Jacobian of $C$ with respect to the modulus given by the sum of the points on the boundary of the regular compactification $\bar C$ of $C$ (cf.\ \cite{Se}, Ch.\,5).
The group $\cA(k)$ is the subgroup of degree zero elements of the relative Picard group $\Pic(\bar C,\bar C \smallsetminus C)$. By \cite{SV}, Thm.\ 3.1 (see \cite{Li}, for the case $C=\bar C$), there is an isomorphism
\[
H_0^S(C,\Z)^0:=\ker(H_0^S(C,\Z) \stackrel{\deg}{\lang} \Z)  \cong \cA(k),
\]
in particular, $\cA(k)$ is a quotient of the group of zero cycles of degree zero on $C$.
From the coefficient sequence together with the divisibility of $H_1^S(C,\Z)$ (which is isomorphic to $k^\times$ if $C$ is proper and zero otherwise), we obtain an isomorphism
\begin{equation} \label{neq4}
H_1^S(C,\Z/m\Z) \xrightarrow[\sim]{\;\delta \;} \null_m H_0^S(C,\Z) \cong \null_m \cA(k).
\end{equation}
After fixing a closed point $P_0$ of $C$, the  morphism $C \to \cA$, $P\mapsto P-P_0$,  is universal for morphisms of $C$ to semi-abelian varieties, i.e., $\cA$ is the generalized Albanese variety of $C$ (\cite{Se}, V, Th.\,2).

Consider the $m$-multiplication map $\cA\stackrel{m}{\to} \cA$. Its maximal \'{e}tale subcovering $\widetilde \cA\to \cA$ is the quotient of $\cA$ by the connected component of the finite group scheme $\null_m \cA$ (if $(p,m)=1$, the connected component is trivial). The projection $\cA\to \widetilde \cA$ induces an isomorphism $\cA(k)\stackrel{\sim}{\to} \widetilde \cA(k)$ on rational points, and we identify $\cA(k)$ and $\widetilde \cA(k)$ via this isomorphism. With respect to this identification, the projection $\widetilde A(k)\to \cA(k)$ is the $m$-multiplication map on $\cA(k)$.

By \cite{Se}, Ch.\ IV,  $\widetilde C := C\times_\cA \widetilde \cA$ is the maximal abelian tame \'{e}tale covering of $C$ with Galois group annihilated by~$m$. Because $\Aut_{\cA}(\widetilde \cA)\cong \null_m \cA(k)$, we obtain an isomorphism
\begin{equation} \label{neq5}
\Hom(\null_m\cA(k),A) \xrightarrow[\sim]{\;\tau\;} H^1_t(C,A)
\end{equation}
for any finite abelian group $A$ with $mA=0$.

\begin{theorem}\label{vergleichepaarung}  For any finite abelian group $A$ with $mA=0$, the diagram
\[
\begin{tikzpicture}[bij/.style={above,sloped,inner sep=0.5pt},ibij/.style={below,sloped,inner sep=2.5pt}]
\matrix (m) [matrix of math nodes, row sep=2em,
column sep=1em, text height=2.5ex, text depth=1.25ex]
{
H_1^S(C,\Z/m\Z) & \times & H^1_t(C,A) && &A\\
\null_m \cA(k)&\times & \Hom(\null_m\cA(k),A)&\phantom{dddd}& & A\\
};
\path[->,font=\scriptsize] (m-1-3) edge node[auto]{$\langle \ , \ \rangle $} (m-1-6);
\path[->] (m-1-1) edge node[bij]{$\sim$} node[left]{$\scriptstyle \delta$} (m-2-1);
\path[->] (m-2-3) edge node[bij]{$\sim$} node[right]{$\scriptstyle \tau$} (m-1-3);
\path[->,font=\scriptsize] (m-2-3) edge node[auto]{eval}(m-2-6);
\draw[double,double distance=1mm] (m-1-6)--(m-2-6);
\end{tikzpicture}
\]
where  $\langle \ , \ \rangle $ is the pairing from Proposition~\ref{pairing} and $\mathit{eval}$ is the evaluation map, commutes. In particular, the upper pairing is perfect and the induced homomorphism $H_1^S(C,\Z/m\Z) \to \pi_1^{t,\ab}(C)/m$ is an isomorphism.
\end{theorem}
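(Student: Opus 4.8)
The plan is to verify commutativity by evaluating both compositions on an arbitrary pair and then, via naturality, reducing everything to a single universal computation on the generalized Albanese variety $\cA$. First I would record that the pairing $\langle\,\cdot\,,\cdot\,\rangle$ is functorial in the coefficient group: for a homomorphism $\psi\colon A\to A'$ of groups killed by $m$ and a tame $A$-torsor $\T$ one has $\alpha^*(\psi_*\T)\cong\psi_*(\alpha^*\T)$, and both $\Phi_{\taut}$ (built from \eqref{eq1}) and $\Phi_{\para}$ are natural, so that $\langle a,\psi_*\T\rangle=\psi\langle a,\T\rangle$. Writing $G:=\null_m\cA(k)$ and letting $[\widetilde C]\in H^1_t(C,G)$ be the class of the universal tame cover $\widetilde C\to C$ as a $G$-torsor (so that $\tau(\psi)=\psi_*[\widetilde C]$), this functoriality reduces the theorem to the single identity
\[
\langle a,[\widetilde C]\rangle=\delta(a)\in G,
\]
to be proved for every $a\in H_1^S(C,\Z/m\Z)$; the general case then follows by applying $\psi$.

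Next I would make $\delta$ explicit. Choosing a representative $\alpha\in\Cor(\Delta^1,C)$ and an integral lift, the mod-$m$ cocycle condition reads $(0^*-1^*)(\alpha)=m\cdot z$ for a zero-cycle $z$, and $\delta(a)=[z]\in\null_mH_0^S(C,\Z)$, which under Abel--Jacobi corresponds to the group-sum $g:=\sigma(\phi_*z)\in\cA(k)$ of the Albanese map $\phi\colon C\to\cA$; since $m[z]=0$ we have $g\in G$. Because $\widetilde C=C\times_\cA\widetilde\cA$ is pulled back from the isogeny-cover $\widetilde\cA\to\cA$ along $\phi$, Proposition~\ref{compo} gives $\alpha^*[\widetilde C]\cong(\phi\circ\alpha)^*\mathcal{E}$ on $\Delta^1$, where $\mathcal{E}$ denotes $\widetilde\cA\to\cA$ viewed as a $G$-torsor.

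The geometric heart is the primitivity of $\mathcal{E}$. Since $\widetilde\cA\to\cA$ is a homomorphism of semi-abelian varieties, the addition map $\mu\colon\cA\times\cA\to\cA$ induces addition on fundamental groups, whence $\mu^*\mathcal{E}\cong p_1^*\mathcal{E}+p_2^*\mathcal{E}$; iterating, the sum morphism $\sigma\colon\Sym^d(\cA)\to\cA$ satisfies $\sigma^*\mathcal{E}\cong\Sym^d(\mathcal{E})$. Combined with Lemma~\ref{symprod} this shows $(\phi\circ\alpha)^*\mathcal{E}\cong a_\alpha^*\mathcal{E}$, where $a_\alpha\colon\Delta^1\to\cA$ is the honest group-trace morphism attached to $\phi\circ\alpha$ (the fibrewise sum of its points taken in the group $\cA$). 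As $a_\alpha(0)-a_\alpha(1)=m\cdot g=0$, the endpoints coincide, $a_\alpha(0)=a_\alpha(1)=:Q$, the torsor $a_\alpha^*\mathcal{E}$ is trivial on $\Delta^1=\A^1$ by Corollary~\ref{simplexex}, and under the above isomorphism $\Phi_{\para}$ becomes the identity of the fibre $\mathcal{E}_Q=[m]^{-1}(Q)$.

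It remains to compute $\Phi_{\taut}$ after transport through the primitivity isomorphism, and this is the step I expect to be the main obstacle. The point is that $\Phi_{\taut}$ is defined from the zero-cycle identity $(0^*-1^*)(\phi\circ\alpha)=m\cdot\phi_*z$ on $\cA$ --- which remembers $\phi_*z$ as a formal sum of points --- whereas $a_\alpha$ only retains its group-sum $g$. Identifying both endpoint fibres with $\mathcal{E}_Q$ via primitivity and then trivializing $m\cdot(\phi_*z)^*\mathcal{E}\cong m\cdot\mathcal{E}_g$ by the explicit formula \eqref{eq1}, one finds that $\Phi_{\taut}$ becomes translation by $-g$ on $\mathcal{E}_Q$; since $\Phi_{\para}=\id$, this yields $\gamma(\alpha,[\widetilde C])=g=\delta(a)$, as desired. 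The delicate part is precisely the sign- and torsor-bookkeeping reconciling \eqref{eq1} with the primitivity identification. Finally, commutativity of the diagram together with the perfectness of the evaluation pairing on the finite group $G$ (taking $A=\Z/m\Z$) shows at once that $\langle\,\cdot\,,\cdot\,\rangle$ is perfect and that the induced map $\rec_C$ is an isomorphism.
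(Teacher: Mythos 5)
Your overall strategy coincides with the paper's: reduce by functoriality in the coefficients to the universal torsor $\widetilde C\to C$ with group $G=\null_m\cA(k)$, transport everything to the isogeny torsor $\widetilde\cA\to\cA$ via symmetric powers (your ``primitivity'' of $\mathcal E$ is exactly the paper's identification $\Sym^d(\widetilde C)\cong\widetilde\cA\times_\cA\Sym^d(C)$ combined with Lemma~\ref{symprod} and Proposition~\ref{compo}), and observe that $\Phi_{\para}$ becomes the identity of a single fibre. On that last point, note that you only argue that the two endpoints of $a_\alpha$ agree; what you actually need --- and what does hold, since every morphism from $\A^1$ to a semi-abelian variety is constant --- is constancy of $a_\alpha$: if $a_\alpha$ were a non-constant loop, parallel transport in $a_\alpha^*\mathcal E$ would be a translation of $\mathcal E_Q$ that need not be trivial.

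The genuine gap is the step you yourself flag: the identification of $\Phi_{\taut}$ as translation by $-g$ is asserted (``one finds that\dots''), not proved, and it is precisely the heart of the theorem. Moreover, with an arbitrary representative $\alpha=\sum n_iZ_i$ this bookkeeping is genuinely awkward: Lemma~\ref{symprod} applies to a single effective correspondence, and the canonical trivialization \eqref{eq1} of $m\cdot(z^*\mathcal E)$ defining $\Phi_{\taut}$ must be matched against the primitivity identification point by point, which requires control of the actual zero-cycles $0^*\alpha$ and $1^*\alpha$ and not merely of their group-sums in $\cA(k)$. The paper's device for making this tractable is absent from your proposal: by \cite{SV}, Thm.~3.1, one may choose the representative to be the single effective correspondence $g$ induced by a finite morphism $\gamma\colon C'\to\P^1$ with $mz=\gamma^{*}(0)-\gamma^{*}(1)$. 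Then every point of $\gamma^*(0)$ and $\gamma^*(1)$ occurs with multiplicity divisible by $m$, which is exactly what makes the trivializations $\overline{(a_1+\alpha_1,\ldots,a_d+\alpha_d)}\mapsto\alpha_1+\cdots+\alpha_d$ of the endpoint fibres independent of the chosen lifts $a_i$, and lets one read off directly that $\Phi_{\para}\circ\Phi_{\taut}^{-1}$ is translation by $\sum(a_i-b_i)=[z]$. Your claimed answer is correct, but without this (or an equivalent) reduction the ``delicate sign- and torsor-bookkeeping'' you defer is not a detail --- it is the proof.
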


\begin{proof}
We have to show that $\phi(\delta(\zeta)) = \langle \zeta, \tau(\phi)\rangle$
for any $\zeta \in H_1^S(C,\Z/m\Z)$ and any $\phi \in \Hom(_m \cA(k),A)$. By functoriality, it suffices to consider the universal case $A=\null_m \cA(k)$, $\phi=\id$. In this case $\tau(\id)$ is the torsor $\widetilde{\pi}: \widetilde C \to C$.

Let  $C'$ be the regular compactification of $C$.  By \cite{SV}, Thm.\ 3.1, $\delta(\zeta)\in  \null_m H_0^S(C,\Z)= \null_m \cA(k)$ is the class $[z]$ of some $z\in Z_0(C)$ (the group of zero-cycles on $C$) such that
\[
mz=\gamma^{*}(0)-\gamma^{*}(1)
\]
for some finite morphism $\gamma: C' \to \P^1$ with $C' \smallsetminus C \subset \gamma^{-1}(\infty) $.
The diagram
\[
\begin{tikzcd}
C' \smallsetminus \gamma^{-1}(\infty)\arrow[hook]{r}\dar{\gamma|_{C' {\smallsetminus} \gamma^{-1}(\infty)}} & C \arrow[hook]{r} &C'\dar{\gamma}\\
\Delta^1 \arrow[hook]{rr}&&\P^1
\end{tikzcd}
\] shows that $\gamma$ induces a finite correspondence, say $g$, from  $\Delta^1$ to $C$.  The class of $g$ in  $H_1^S(C,\Z/m\Z)$ is a pre-image of $\delta(\zeta)$ under $H_1^S(C,\Z/m\Z) \stackrel{\sim}{\to} \null_m H_0^S(C,\Z)$, i.e., $\zeta$ is represented by $g$.
It therefore suffices to show that
\[
[z]=\langle g, \widetilde C\rangle.
\]
Let $d$ be the degree of $\gamma$ and $\gamma^*(0)=  \sum_{i=1}^d P_i$, $\gamma^*(1)= \sum_{i=1}^d Q_i$.
Each point in $\gamma^*(0)$ and $\gamma^*(1)$ occurs with multiplicity divisible by $m$, in particular $d=mr$ for some integer $r$. After reindexing, we may assume that $P_i=P_{j}$ and $Q_i=Q_j$ for $i\equiv j \bmod r$, hence
\[
z= \sum_{i=1}^r P_i-\sum_{i=1}^r Q_i.
\]
On the level of closed points, $\widetilde C = C\times_{\cA} \widetilde \cA$ can be identified with the set of $a\in \widetilde \cA(k)=\cA(k)$ such that $ma=P-P_0$ for some point $P\in C$ ($a$ projects to $P$ in $C$, i.e., $\widetilde{\pi}(a)=P$).
The $\null_m \cA(k)$-principal homogeneous space $0^*g^*\widetilde C$ can be identified with the quotient of the set
\[
\prod_{i=1}^d \widetilde{\pi}^{-1}(P_i)
\]
by the action of $\null_m \cA(k)^{d-1}$ given by
\[
(\beta_1,\ldots,\beta_{d-1}) (a_1,\ldots, a_d) = (a_1 +\beta_1, a_2 -\beta_1+\beta_2,\ldots, a_d -\beta_{d-1}).
\]
We fix points $a_1,\ldots,a_d\in \widetilde C$ over $P_1,\ldots,P_d$ subject to the condition $a_i=a_j$ for $P_i=P_j$. Then $0^*g^*\widetilde C$ is identified with the quotient of the set
\[
(a_1 + \null_m \cA(k)) \times  \cdots \times  (a_d + \null_m \cA(k))
\]
by the action of $\null_m \cA(k)^{d-1}$. Since each $a_i$ occurs with multiplicity divisible by $m$,  the trivialization $0^*g^*(\widetilde C) \stackrel{\sim}{\to} \null_m \cA(k)$ given by
\[
\overline{(a_1 + \alpha_1, \ldots, a_d +\alpha_d)} \longmapsto \alpha_1 + \cdots + \alpha_d \in \null_m \cA(k)
\]
does not depend on the choice of the $a_i$.
We do the same with $1^*g^*(\widetilde C)$ by choosing $b_i\in \widetilde C$ over $Q_i$. Then we see that the tautological identification
$\Phi_{\taut}: 0^*g^*(\widetilde C) \stackrel{\sim}{\to} 1^*g^*(\widetilde C)$ is given by
\[
\overline{(a_1 + \alpha_1, \ldots, a_d +\alpha_d)} \longmapsto
\overline{(b_1 + \alpha_1, \ldots, b_d +\alpha_d)}.
\]
Now consider the morphism
\[
\Sigma: \Sym^d(C) \longrightarrow \cA, \ (x_1,\ldots,x_d) \longmapsto [\sum (x_i - P_0)].
\]
Associated with the $\null_m \cA(k)$-torsor $\widetilde C$ over $C$, we have the  $\null_m \cA(k)$-torsor $\Sym^d_{\null_m \cA(k)}(\widetilde C)$ over $\Sym^d(C)$ (cf.\ the paragraph preceding Lemma~\ref{symprod}).
The commutative diagram
\[
\begin{tikzcd}[row sep=small]
\Sym^d_{\null_m \cA(k)}(\widetilde C)\rar{\Sigma}\dar&\widetilde \cA\dar\\
\Sym^d(C)\rar{\Sigma} & \cA
\end{tikzcd}
\]
induces a map (hence an isomorphism) of $\null_m \cA(k)$-torsors $\Sym^d_{\null_m \cA(k)}(\widetilde C) \stackrel{\sim}{\to} \widetilde \cA \times_{\cA} \Sym^d(C)$.
Consider the morphism $S_g: \Delta^1_k  \to \Sym^d(C)$ associated with the finite correspondence $g$. Since the generalized Jacobian of $\Delta^1_k\cong \A^1_k$ is $\Spec(k)$, the composite
\[
\Delta^1_k \stackrel{S_g}\longrightarrow \Sym^d(C) \stackrel{\Sigma}{\longrightarrow} \cA
\]
is constant with value $a:= [ \sum_{i=1}^d (P_i - P_0)] = [\sum_{i=1}^d (Q_i -  P_0)] \in \cA(k)$.
By Lemma~\ref{symprod}, we obtain an isomorphism
\[
g^*(\widetilde C)= S_g^*(\Sym^d(\widetilde C))= \Sigma^* S_g^* \widetilde \cA = \Delta^1_k \times \widetilde{\pi}^{-1}(a)
\]
(giving a trivialization after choosing a point in $\widetilde{\pi}^{-1}(a)$).
On the fibre over $0$ it is given by
\[
\overline{(a_1 + \alpha_1, \ldots, a_d +\alpha_d)} \longmapsto \sum_{i=1}^d (a_i + \alpha_i) \in \widetilde{\pi}^{-1}(a)\subset \widetilde \cA
\]
and similarly on the fibre over $1$. We conclude that $\Phi_{\para} \circ \Phi_{\taut}^{-1}$ is translation by
\[
\sum_{i=1}^d (a_i-b_i) = \sum_{i=1}^r m(a_i-b_i) = \sum_{i=1}^r [P_i - Q_i] = [z].
\]
This concludes the proof.
\end{proof}

\section{The blow-up sequences}\label{exact-sect}

All schemes in this section are separated schemes of finite type over the spectrum of a perfect field $k$.
A curve on a scheme $X$ is a closed one-dimensional subscheme. The normalization of a curve $C$ is denoted by $\widetilde C$.

\bigskip
Now let
\[
\begin{tikzcd}
Z'\rar\dar&X'\dar{\pi}\\
Z\rar{i}& X
\end{tikzcd}
\]
be an abstract blow-up square, i.e.,  a cartesian  diagram of schemes such that $\pi: X'\to X$ is proper, $i: Z \to X$ is a closed embedding and $\pi$ induces an isomorphism $(X'\smallsetminus Z')_\textrm{red} \stackrel{\sim}{\to} (X \smallsetminus Z)_\textrm{red}$.

\begin{proposition} \label{exact-etale}  Given an abstract blow-up square and an abelian group $A$, assume that $\pi$ is finite or $A$ is torsion. Then there is a natural exact sequence
\[
0 \to H^0_\et(X,A) \to H^0_\et(X',A) \oplus H^0_\et(Z,A) \to H^0_\et(Z',A)  \qquad
\]
\[
\qquad \stackrel{\delta}{\to} H^1_t(X,A) \to H^1_t(X',A) \oplus H^1_t(Z,A) \to H^1_t(Z',A).
\]
\end{proposition}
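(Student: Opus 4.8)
The plan is to deduce the asserted sequence from the analogous Mayer--Vietoris sequence in \emph{étale} cohomology together with a descent property for curve-tameness along the blow-up square. Write $i'\colon Z'\to X'$ and $\pi'\colon Z'\to Z$ for the two remaining maps, so that $\pi i'=i\pi'$. First I would establish the low-degree part of the étale Mayer--Vietoris sequence
\[
0\to H^0_\et(X,A)\to H^0_\et(X',A)\oplus H^0_\et(Z,A)\to H^0_\et(Z',A)\xrightarrow{\delta}H^1_\et(X,A)\to H^1_\et(X',A)\oplus H^1_\et(Z,A)\to H^1_\et(Z',A).
\]
This is the long exact sequence of the natural triangle
\[
A_X\to R\pi_*A_{X'}\oplus i_*A_Z\to i_*R\pi'_*A_{Z'}\to A_X[1]
\]
of étale sheaves on $X$, whose first two arrows are the adjunction units and where $\pi i'=i\pi'$ is used to identify $R\pi_*Ri'_*A_{Z'}$ with $i_*R\pi'_*A_{Z'}$. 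I would verify that this triangle is distinguished on stalks: away from $Z$ the morphism $\pi$ is an isomorphism (on reductions, which étale cohomology ignores), so the triangle reads $A\to A\oplus 0\to 0$; over a geometric point $\bar x$ of $Z$ it becomes $A\to R\Gamma(F,A)\oplus A\to R\Gamma(F,A)$, where $F$ is the common fibre $X'_{\bar x}=Z'_{\bar x}$ (recall $Z'=\pi^{-1}(Z)$), and this is a mapping cone because of the identity summand. The stalk computations use proper base change when $A$ is torsion, and the exactness of $\pi_*$ with $R^q\pi_*=0$ for $q>0$ when $\pi$ is finite; this is exactly where the hypothesis ``$\pi$ finite or $A$ torsion'' enters. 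Applying $R\Gamma(X,-)$ yields the displayed sequence, naturally in the square. Since $H^0_\et=H^0_t$, the first three terms already coincide with those in the assertion.

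The heart of the matter is the descent of curve-tameness: a class $a\in H^1_\et(X,A)$ is tame if and only if $\pi^*a$ and $i^*a$ are tame. The ``only if'' direction is immediate, since pullback preserves curve-tameness. For the converse, let $g\colon C\to X$ be a morphism from a regular curve, which we may take connected, hence integral; I must show $g^*a$ is tame. If $g(C)\subseteq Z$, then $g=i\circ g_Z$ and $g^*a=g_Z^*(i^*a)$ is tame by hypothesis. Otherwise $U:=g^{-1}(X\smallsetminus Z)$ is dense open in $C$, and over $U$ the isomorphism $X'\smallsetminus Z'\cong X\smallsetminus Z$ lifts $g|_U$ to a morphism $U\to X'$. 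Since $\pi$ is proper and the local rings of $C$ at the finitely many points of $C\smallsetminus U$ are discrete valuation rings, the valuative criterion of properness extends this uniquely to a lift $\tilde g\colon C\to X'$ with $\pi\tilde g=g$; hence $g^*a=\tilde g^*(\pi^*a)$ is tame. Therefore $a$ is tame. I expect this curve-lifting step to be the only genuinely geometric obstacle, but the valuative criterion makes it short.

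Finally I would assemble the tame sequence. All its maps are pullbacks and so preserve tameness, giving a well-defined complex sitting inside the étale one; the degree-$0$ terms and the map $\delta$ are unchanged. To see that $\delta$ lands in $H^1_t(X,A)$, note that $\pi^*\circ\delta=0=i^*\circ\delta$ by exactness of the étale sequence, so for $\xi\in H^0_\et(Z',A)$ the classes $\pi^*(\delta\xi)$ and $i^*(\delta\xi)$ vanish and are in particular tame; by the descent property $\delta\xi$ is tame, i.e.\ $\im\delta\subseteq H^1_t(X,A)$. Exactness at $H^0_\et(Z',A)$ and at $H^1_t(X,A)$ then follows by intersecting the étale exact sequence with the tame subgroups. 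For exactness at $H^1_t(X',A)\oplus H^1_t(Z,A)$, take a tame pair $(b,c)$ with $(i')^*b=(\pi')^*c$; by étale exactness there is $a\in H^1_\et(X,A)$ with $\pi^*a=b$ and $i^*a=c$, and since $b,c$ are tame the descent property gives $a\in H^1_t(X,A)$, so $(b,c)$ lies in the image of $H^1_t(X,A)$. Conversely the image is contained in the kernel because the sequence is a complex. This produces the asserted exact sequence.
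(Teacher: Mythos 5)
Your proposal is correct and follows essentially the same route as the paper: the \'{e}tale Mayer--Vietoris sequence comes from proper base change (resp.\ exactness of $\pi_*$ for finite $\pi$), and the tame statements are checked by restricting to regular curves, over which the blow-up square trivializes. Your ``descent of curve-tameness'' lemma, proved by lifting a curve $C\to X$ to $X'$ via the valuative criterion, is exactly the content of the paper's observation that the base change of the square to the normalization $\widetilde C$ of a curve is a \emph{trivial} square (i.e.\ $\pi_{\mathrm{red}}$ acquires a section or $i$ becomes surjective), so the two arguments coincide up to packaging.
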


\begin{proof}
We call an abstract blow-up square trivial, if  $i$ is surjective (i.e., $s_\mathrm{red}$ is an isomorphism) or if $\pi_\mathrm{red}: X'_\mathrm{red}\to X_\mathrm{red}$ has a section.
Every  abstract blow-up square with $X$ a connected regular curve is trivial.

Now let an arbitrary abstract blow-up square be given.
If $A$ is torsion, the proper base change theorem implies (cf.\ \cite{ge-duke}, 3.2 and 3.6) that
we have a long exact sequence
\[
 \cdots \to H^i_\et(X,A) \to H^i_\et(X',A) \oplus H^i_\et(Z,A) \to H^i_\et(Z',A) \to  H^{i+1}_\et(X,A) \to \cdots
\]
If $\pi$ is finite, the same is true for arbitrary $A$ since $\pi_*$ is exact. If the blow-up square is trivial, this long exact sequence splits into short exact sequences $0\to H^i_\et(X,A) \to H^i_\et(X',A) \oplus H^i_\et(Z,A) \to H^i_\et(Z',A) \to 0$ for all $i$.

\medskip
Next we show the exact sequence of the proposition. We omit the coefficients $A$ and put $H^0_t(X)= H^0_\et(X)$.
We first show, that the image of the boundary map $\delta: H^0_\et(Z') \to H^1_\et(X)$ has image in $H^1_t(X)$, thus showing the existence of $H^0_t(Z') \to H^1_t(X)$ and, at the same time, the exactness of the sequence at $H^1_t(X)$.
Let $\widetilde C \to X$ be the normalization of a curve in $X$. The base change
\[
\begin{tikzcd}
Z'_{\widetilde C}\rar\dar&X'_{\widetilde C}\dar{\widetilde \pi}\\
Z_{\widetilde C}\rar{\widetilde i}& {\widetilde C}
\end{tikzcd}
\]
of our abstract blow-up square to $\widetilde C$ is a trivial abstract blow-up square. Therefore, for any $\alpha \in H^0_\et(Z')$, the pull-back of  $\alpha$ to $H^0_\et(Z'_{\widetilde C})$ lies in the image of $H^0_\et(X'_{\widetilde C}) \oplus H^0_\et(Z_{\widetilde C}) \to H^0_\et(Z'_{\widetilde C}) $ and has therefore trivial image under $\delta: H^0_\et(Z'_{\widetilde C}) \to H^1_\et (\widetilde C)$. Therefore, $\delta(\alpha) \in H^1_\et(X)$ has trivial image in $H^1_\et(\widetilde C)$ for every curve $C\subset X$, in particular, it lies in $H^1_t(X)$.

It remains to show exactness at $H^1_t(X')\oplus H^1_t(Z)$. Let $\alpha$ be in this group with trivial image in $H^1_t(Z')$. Then there exists $\beta \in H^1_\et(X)$ mapping to $\alpha$ and it remains to show that $\beta$ lies in the subgroup $H^1_t(X)$. But this is clear, because for every curve $C\subset X$ we have $H^1_t(\widetilde C)=\ker(H^1_t(X'_{\widetilde C}) \oplus H^1_t(Z_{\widetilde C}) \to H^1_t(Z'_{\widetilde C}))$.
\end{proof}

\begin{proposition} \label{exact-suslin} Given an abstract blow-up square
\[
\begin{tikzcd}
Z'\rar\dar&X'\dar{\pi}\\
Z\rar{i}& X
\end{tikzcd}
\]
and an abelian group $A$, there is a natural exact sequence of Suslin homology groups
\[
H^S_1(Z',A) \to H^S_1(X',A) \oplus H^S_1(Z,A) \to H^S_1(X,A)  \qquad\qquad
\]
\[
\qquad  \stackrel{\delta}{\to} H^S_0(Z',A) \to H^S_0(X',A) \oplus H^S_0(Z,A)  \to H^S_0(X,A) \to 0.
\]
\end{proposition}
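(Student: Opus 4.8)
The plan is to exhibit the six-term sequence as the tail of the long exact homology sequence of a distinguished triangle of Suslin complexes. Write $C_\bullet(W)$ for the Suslin complex $n\mapsto \Cor(\Delta^n,W)$, so that $H^S_i(W,A)=H_i(C_\bullet(W)\otimes A)$. Each $C_n(W)=\Cor(\Delta^n,W)$ is a free abelian group, hence $C_\bullet(W)$ is a complex of flat $\Z$-modules and $C_\bullet(W)\otimes A$ represents $C_\bullet(W)\otimes^{L}A$ in $D(\mathrm{Ab})$. Consequently, once I produce a distinguished triangle
\[
C_\bullet(Z')\to C_\bullet(X')\oplus C_\bullet(Z)\to C_\bullet(X)\xrightarrow{\;+1\;}
\]
in $D^{-}(\mathrm{Ab})$, I may apply $-\otimes^{L}A=-\otimes A$ and pass to homology. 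The resulting long exact sequence reads
\[
\cdots\to H^S_1(Z',A)\to H^S_1(X',A)\oplus H^S_1(Z,A)\xrightarrow{} H^S_1(X,A)\xrightarrow{\;\delta\;} H^S_0(Z',A)\to\cdots,
\]
and since $C_\bullet(Z')$ is concentrated in non-negative degrees we have $H^S_{-1}(Z',A)=0$, which forces the final map $H^S_0(X',A)\oplus H^S_0(Z,A)\to H^S_0(X,A)$ to be surjective. This is exactly the asserted sequence, so everything reduces to constructing the triangle.

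To build the triangle I would use that the abstract blow-up square is a cdh-distinguished square and that finite correspondences satisfy descent for such squares. The formal shadow of the statement is the blow-up triangle $M(Z')\to M(X')\oplus M(Z)\to M(X)\to M(Z')[1]$ in Voevodsky's category $DM^{\mathrm{eff},-}_{\Nis}(k,\Z)$ (cf.\ \cite{MVW}), where $M(W)$ is represented by the complex of Nisnevich sheaves with transfers $C_\bullet\Z_{\mathrm{tr}}(W)$, $\Z_{\mathrm{tr}}(W)=\Cor(-,W)$, and $\Cor(\Delta^n,W)=\big(C_\bullet\Z_{\mathrm{tr}}(W)\big)_n(\Spec k)$. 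What I actually need, however, is the realization of this triangle already on the nose at the level of the cycle groups $\Cor(\Delta^\bullet,-)$, i.e.\ that the sequence of Suslin complexes is quasi-isomorphic to a short exact sequence of the free abelian groups above; I would extract this from the relative-cycle formalism of \cite{SV} (and compare with Geisser's treatment in \cite{ge-duke}). The left-hand map is injective already termwise, because $Z'=Z\times_X X'\hookrightarrow X'$ is a closed immersion, so $\Cor(\Delta^n,Z')\to\Cor(\Delta^n,X')$ is an inclusion for every $n$.

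The heart of the matter — and the step I expect to be the main obstacle — is the exactness in the middle, namely that finite correspondences into $X$ descend along the abstract blow-up. Concretely, a cycle $W\subset\Delta^n\times X$ finite and surjective over $\Delta^n$ restricts, over the locus where $\pi$ is an isomorphism, to a correspondence into $X'$, and the obstruction to extending it across $Z$, together with the ambiguity measured by $Z'$, is controlled by the proper-descent properties of relative cycles in \cite{SV}; this is exactly where properness of $\pi$ and the identification $(X'\smallsetminus Z')_{\mathrm{red}}\xrightarrow{\sim}(X\smallsetminus Z)_{\mathrm{red}}$ enter decisively, and it is the genuinely hard input (as opposed to the formal derivation above). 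As an independent check on the bottom of the sequence, the surjectivity of $H^S_0(X',A)\oplus H^S_0(Z,A)\to H^S_0(X,A)$ can be seen by hand: any closed point of $X$ either lies in $Z$ or, since $\pi$ is proper and an isomorphism over $X\smallsetminus Z$, is the image of a closed point of $X'$, so the map $Z_0(X')\oplus Z_0(Z)\to Z_0(X)$ is already surjective on zero-cycles and hence so is the induced map on $H^S_0$.
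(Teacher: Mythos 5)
Your formal reduction at the top is fine, but the pivot on which you hang everything --- a distinguished triangle $C_\bullet(Z')\to C_\bullet(X')\oplus C_\bullet(Z)\to C_\bullet(X)\xrightarrow{+1}$ in $D^-(\mathrm{Ab})$ --- is strictly stronger than the proposition and is not available here. Such a triangle would yield a long exact sequence in \emph{all} degrees; that is the cdh blow-up triangle for motives, which in the standard treatments (e.g.\ \cite{MVW}) is established only under resolution of singularities, whereas the proposition is asserted unconditionally for an arbitrary abstract blow-up square. Concretely: the sequence of presheaves $0\to c(Z')\to c(X')\oplus c(Z)\to c(X)$ on $\Sm/k$ \emph{is} exact (\cite{blowup}, Thm.\ 5.2, 4.7), but the last map is genuinely not surjective --- the proper transform of a finite correspondence $W\subset\Delta^n\times X$ need not be finite over $\Delta^n$ --- so the cokernel presheaf $F$ is nonzero and the complex $K_\bullet=F(\Delta^\bullet)$ is not acyclic for formal reasons. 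Your sentence claiming the sequence of Suslin complexes is ``quasi-isomorphic to a short exact sequence'' is exactly the unproven (and, without resolution, unprovable) step; deferring it to ``the relative-cycle formalism of \cite{SV}'' does not close the gap, since that formalism gives left exactness and control of the cokernel, not surjectivity.

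The correct route, and the one the paper takes, is to prove only the bounded vanishing $H_i(K_\bullet)=0$ for $i\le 2$, which is precisely what the six-term sequence requires (split the four-term exact sequence of complexes into two short exact sequences and chase the resulting long exact sequences; freeness of the $\Cor(\Delta^n,-)$ handles the coefficients $A$). The input is that every section of $F$ over a smooth $U$ with $\dim U\le 2$ dies after a proper birational morphism $V\to U$ with $V$ smooth --- available unconditionally because resolution of singularities holds for surfaces --- combined with the vanishing theorem \cite{SS}, Thm.\ 2.4, for the Nisnevich homology sheaves $\mathcal{H}_i((F_\bullet)_\Nis)$ in degrees $i\le 2$. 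The dimension bound $2$ in that input is the reason the conclusion stops at $H_1^S$; your approach, by aiming at the full triangle, implicitly demands the analogous statement in all dimensions. Your termwise injectivity of $c(Z')\to c(X')$ and the hand check of surjectivity at $H_0^S$ are both correct but do not touch this central issue.
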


\begin{proof}
Consider the exact sequences
\[
C_\bullet(Z',A) \hookrightarrow C_\bullet(X',A)\oplus C_\bullet(Z,A) \to C_\bullet(X,A)  \twoheadrightarrow K_\bullet^{A}
\]
and
\[
C_\bullet(Z') \hookrightarrow C_\bullet(X')\oplus C_\bullet(Z) \to C_\bullet(X)  \twoheadrightarrow K_\bullet,
\]
where $K_\bullet^{A}$ and $K_\bullet$  are defined to make the sequences exact. Since the complexes $C_\bullet(-)$ consist of free abelian groups, in order the show the statement of the proposition, it suffices to show that $H_i(K_\bullet)=0$ for $i\leq 2$.
Let $\Sm/k$ be the full subcategory of $\Sch/k$ consisting of smooth schemes. For  $Y\in \Sch/k$ we consider the presheaf $c(Y)$ on $\Sm/k$ given by
$ c(Y) (U) = \Cor(U,Y)$.
Then, by \cite{blowup}, Thm.\,5.2, 4.7 and its proof, the sequence
\[
0\to c(Z') \to c(X')\oplus c(Z) \stackrel{(\pi_*,i_*)}\to c(X)
\]
is exact and $F:= \mathit{coker}(\pi_*,i_*)$ has the property that, for any $U\in \Sm/k$ of dimension $\leq 2$ and any $x\in F(U)$, there exists a proper birational morphism $\phi: V\to U$ with $V$ smooth such that $\phi^*(x)=0$. Let $F_\bullet$ be the complex of presheaves given by $F_n(U)=F(U\times \Delta^n)$ with the obvious differentials and let $(F_\bullet)_\Nis$ be the associated complex of sheaves on $(\Sm/k)_\Nis$. Then by \cite{SS}, Thm.\ 2.4, the Nisnevich sheaves
\[
\mathcal{H}_i((F_\bullet)_\Nis)
\]
vanish for $i\leq 2$. Evaluating at $U=\Spec(k)$ yields the result.
\end{proof}

Now assume that $k$ is algebraically closed. Let
\[
\rec_{1,X}: H_1^S(X,\Z/m\Z) \to H^1_t(X,\Z/m\Z)^*
\]
 be the reciprocity map constructed in Section~\ref{torseur} and let
\[
\rec_{0,X}: H_0^S(X,\Z/m\Z) \to H^0_\et(X,\Z/m\Z)^*
\]
be the homomorphism induced by the pairing
\[
\langle \cdot,\cdot\rangle: \ H_0^S(X,\Z/m\Z) \times H^0_\et(X,\Z/m\Z) \longrightarrow \Z/m\Z
\]
defined as follows: Given $a\in H_0^S(X,\Z/m\Z)$ and $b\in H^0_\et(X,\Z/m\Z)$, we represent $a$ by a correspondence $\alpha\in\Cor(\Delta^0,X)$ and put $\langle a,b\rangle= \alpha^*(b) \in H^0_\et(\Delta^0,\Z/m\Z)\cong\Z/m\Z$. This is well-defined since the homomorphisms $0^*, 1^*:H^0_\et(\Delta^1,\Z/m\Z) \to H^0_\et(\Delta^0,\Z/m\Z)$ agree.

\begin{lemma}\label{rec0iso}
For any $m$, $\rec_{0,X}$ is an isomorphism.
\end{lemma}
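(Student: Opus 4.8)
The plan is to identify $H_0^S(X,\Z/m\Z)$ with a concrete group of zero-cycles and $H^0_\et(X,\Z/m\Z)^*$ with its dual, then check the pairing is the obvious perfect duality. Since $k$ is algebraically closed, $H^0_\et(X,\Z/m\Z)$ is the group of $\Z/m\Z$-valued functions on the set $\pi_0(X)$ of connected components of $X$; equivalently, $H^0_\et(X,\Z/m\Z) = (\Z/m\Z)^{\pi_0(X)}$ (a finite product when $X$ is of finite type). On the homology side, $H_0^S(X,\Z/m\Z)$ is the cokernel of $(0^*-1^*): \Cor(\Delta^1,X)\otimes\Z/m\Z \to \Cor(\Delta^0,X)\otimes\Z/m\Z$, and the degree map on each component shows that $H_0^S(X,\Z/m\Z)\cong (\Z/m\Z)^{\pi_0(X)}$ as well: two zero-cycles on a connected $X$ are $\Delta^1$-homologous modulo $m$ iff they have the same degree modulo $m$. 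This last fact is essentially the statement that $H_0^S$ of a connected variety is $\Z$, which holds by the theory of Suslin homology (it follows from the fact that any two $k$-points are connected by a chain of curves, giving a finite correspondence from $\Delta^1$).

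**First I would** reduce to the connected case. Both functors $H_0^S(-,\Z/m\Z)$ and $H^0_\et(-,\Z/m\Z)^*$ split as direct sums (resp. products, but these agree for finitely many components) over the connected components of $X$, and the pairing respects this decomposition since a correspondence $\alpha\in\Cor(\Delta^0,X)$ supported over one component pairs trivially with a locally constant function supported on a different component. **Then** for connected $X$ the claim becomes: the pairing $(\Z/m\Z)\times(\Z/m\Z)\to\Z/m\Z$ sending the class of a single $k$-point $\alpha$ (a generator of $H_0^S$) against the constant function $1\in H^0_\et$ equals $1$, which is immediate from the definition $\langle a,b\rangle = \alpha^*(b)$: pulling back the constant function $1$ along any $k$-point gives $1\in H^0_\et(\Delta^0,\Z/m\Z)\cong\Z/m\Z$.

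**The main obstacle** is establishing that $H_0^S(X,\Z/m\Z)\cong(\Z/m\Z)^{\pi_0(X)}$ with the degree map inducing the isomorphism on each connected component; everything else is formal. For this I would invoke the Suslin–Voevodsky computation: for $X$ connected the degree map $H_0^S(X,\Z)\to\Z$ is surjective with kernel $\cA(k)$ (the group identified in \eqref{neq4} via \cite{SV}, Thm.\ 3.1), so that $H_0^S(X,\Z)\otimes\Z/m\Z$ and $\null_mH_0^S(X,\Z)$ combine through the coefficient sequence; the point is that $\cA(k)$ is divisible (being the group of rational points of a semi-abelian variety over an algebraically closed field), whence $\cA(k)\otimes\Z/m\Z=0$ and $H_0^S(X,\Z/m\Z)\cong\Z/m\Z$. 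Strictly one must handle possibly singular connected $X$, but connectedness of $X_\mathrm{red}$ suffices and the degree argument is insensitive to nilpotents; alternatively one reduces to a curve through $X$ by the usual moving argument. Granting this identification, the commuting square of perfect pairings $(\Z/m\Z)\times(\Z/m\Z)\to\Z/m\Z$ with the evaluation pairing shows $\rec_{0,X}$ is an isomorphism.
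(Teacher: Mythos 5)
Your overall strategy---reduce to connected $X$, identify both sides with $\Z/m\Z$ via the degree map, and check that the pairing becomes the identity---is the same as the paper's, and the formal parts (the decomposition over $\pi_0(X)$, the evaluation of a point against the constant function $1$) are fine. The gap is in the step you yourself flag as the main obstacle: the identification $H_0^S(X,\Z/m\Z)\cong\Z/m\Z$ for connected $X$, equivalently the $m$-divisibility of $\ker(\deg\colon H_0^S(X,\Z)\to\Z)$. You justify this by citing \cite{SV}, Thm.~3.1, asserting that this kernel is $\cA(k)$ for a semi-abelian variety $\cA$. That theorem describes $H_0^S$ of a \emph{curve} only; for higher-dimensional $X$ no such description holds (for a smooth projective surface the kernel is $\mathrm{CH}_0(X)^0$, which is in general not the group of points of a semi-abelian variety). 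Moreover, the divisibility statement for general connected $X$ is exactly Corollary~\ref{kerdeg}, which the paper \emph{deduces from} the lemma you are proving, so invoking it here would be circular. Also, your parenthetical that the claim ``follows from the fact that any two $k$-points are connected by a chain of curves'' is not right as stated: a chain of curves only shows that $P-Q$ lies in the image of the degree-zero part of $H_0^S$ of a curve, and one still needs divisibility of that group.

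The alternative you mention in passing---move the $0$-cycle onto a curve through its support---is the correct repair and is what the paper does, but it forces you to treat connected \emph{singular} curves, and your remark that the degree argument is ``insensitive to nilpotents'' does not address this: the issue is genuine reduced singularities, not nilpotents. A connected one-dimensional $Z$ through the support of a cycle can have disconnected normalization $\widetilde Z$ (e.g.\ two copies of $\P^1$ glued at two points), and a degree-zero cycle on $Z$ need not lift to a cycle of degree zero on each component of $\widetilde Z$, so one cannot simply pass to the smooth case; nor is $\ker(\deg)$ for a singular curve literally the points of a semi-abelian variety. The paper closes exactly this gap by comparing $Z$ with its normalization via the abstract blow-up exact sequences of Propositions~\ref{exact-etale} and~\ref{exact-suslin} together with the five-lemma; your proof needs to import that ingredient (or an equivalent) explicitly before the reduction to curves is legitimate.
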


\begin{proof}
For connected $X$, we have the commutative diagram
\[
\begin{tikzcd}
H_0^S(X,\Z/m\Z)\dar{\deg}\rar{\rec_{0,X}}& H^0_\et(X,\Z/m\Z)^*\dar{\wr}\\
H_0^S(k,\Z/m\Z)\rar{\rec_{0,k}}[swap]{\sim}& H^0_\et(k,\Z/m\Z)^*.
\end{tikzcd}
\]
Hence, for connected $X$,  it suffices by functoriality
to consider the mod $m$ degree map. In particular, $\rec_{0,X}$ is surjective for arbitrary $X$ and is an isomorphism if $\dim X =0$. If $X$ is a smooth connected curve, then $H_0^S(X,\Z)=\Pic(\bar X,\bar X \smallsetminus X)$, where $\bar X$ is the smooth compactification of $X$ (cf.\ \cite{SV}, Thm.\ 3.1). The subgroup $\Pic^0(\bar X,\bar X \smallsetminus X)$ of degree zero elements is the group of $k$-rational points of the Albanese of $X$, and hence divisible. Therefore, $\rec_{0,X}$ is an isomorphism for connected, and hence for all smooth  curves.
Considering the normalization morphism of an arbitrary scheme of dimension~$1$ and the  exact sequences of Propositions~\ref{exact-etale} and \ref{exact-suslin}, the five-lemma shows that $\rec_{0,X}$ is a isomorphism for $\dim X \le 1$.

It remains to show that $\rec_{0,X}$ is injective for arbitrary $X$. We may assume $X$ to be connected. Let $a \in \ker (\rec_{0,X})$ and let $\alpha\in Z_0(X)$ be a representing $0$-cycle. Since $\mathit{supp} (\alpha)$ is finite, we can find a connected  $1$-dimensional closed subscheme $Z\subset X$ containing $\mathit{supp} (\alpha)$ (use, e.g., \cite{Mu}, II \S 6 Lemma). Since $\rec_{0,Z}$ is injective and  $a$ is in the image of $H_0^S(Z,\Z/m\Z) \to H_0^S(X,\Z/m\Z)$, we conclude that $a=0$.
\end{proof}

\begin{corollary}\label{kerdeg}
Let $k$ be an algebraically closed field and let $X\in \Sch/k$ be connected. Then the kernel of the degree map
\[
\deg: H_0^S(X,\Z) \lang H_0^S(k,\Z)\cong \Z
\]
is divisible.
\end{corollary}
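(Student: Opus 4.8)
The plan is to deduce the divisibility of $G^0 := \ker(\deg)$ as a formal consequence of Lemma~\ref{rec0iso}, which asserts that the mod-$m$ reciprocity map $\rec_{0,X}$ is an isomorphism. Write $G := H_0^S(X,\Z)$. Since $k$ is algebraically closed and $X$ is connected, $X$ has a $k$-rational closed point, which furnishes a zero-cycle of degree one; hence the degree map $\deg: G \to \Z$ is surjective and we obtain a short exact sequence
\[
0 \to G^0 \to G \xrightarrow{\deg} \Z \to 0.
\]
Because $\Z$ is free, this sequence splits, so $G \cong G^0 \oplus \Z$ with $\deg$ the projection onto the second factor.

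First I would pass to coefficients modulo $m$. As the Suslin complex $\Cor(\Delta^\bullet,X)$ consists of free abelian groups concentrated in non-negative degrees, the coefficient sequence produces a natural isomorphism $G/m \cong H_0^S(X,\Z/m\Z)$, with no $\mathrm{Tor}$ contribution since $H_{-1}^S(X,\Z)=0$. Reducing the split sequence above modulo $m$ then yields $G/m \cong G^0/m \oplus \Z/m$, and under this identification the mod-$m$ degree map $H_0^S(X,\Z/m\Z) \to H_0^S(k,\Z/m\Z) = \Z/m$ becomes the projection onto the second summand.

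Next I would invoke Lemma~\ref{rec0iso}: for connected $X$ the commutative square in its proof identifies $\rec_{0,X}$ with this mod-$m$ degree map, so the latter is an isomorphism, i.e.\ the projection $G^0/m \oplus \Z/m \xrightarrow{\sim} \Z/m$ is bijective. A projection onto a direct summand can be an isomorphism only if the complementary summand vanishes; hence $G^0/m = 0$, that is, $G^0 = mG^0$. As $m$ was arbitrary, multiplication by every positive integer is surjective on $G^0$, which is exactly the assertion that $G^0$ is divisible.

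I do not expect a serious obstacle, since all the genuine content sits in the already-established Lemma~\ref{rec0iso}; the only points requiring care are the naturality bookkeeping that makes the coefficient isomorphism $H_0^S(X,\Z)/m \cong H_0^S(X,\Z/m\Z)$ compatible with the degree maps, and the verification that the chosen splitting carries the mod-$m$ degree map to the projection onto $\Z/m$. Both are routine, so the corollary reduces cleanly to the isomorphism statement for $\rec_{0,X}$.
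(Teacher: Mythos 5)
Your proof is correct and is precisely the deduction the paper intends: the corollary is stated without proof as an immediate consequence of Lemma~\ref{rec0iso}, whose proof identifies $\rec_{0,X}$ for connected $X$ with the mod-$m$ degree map, and your argument (coefficient sequence with vanishing $\mathrm{Tor}$ term, splitting off $\Z$, and concluding $\ker(\deg)/m=0$ for all $m$) fills in exactly that reduction.
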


\begin{proposition} \label{compare_ex_seq} Let $k$ be algebraically closed and let
\[
\begin{tikzcd}
Z'\rar{i'}\dar{\pi'}&X'\dar{\pi}\\
Z\rar{i}& X
\end{tikzcd}
\]
be an abstract blow-up square. Then for any integer $m\geq 1$ the diagram
\[
\begin{tikzpicture}
\matrix (m) [matrix of math nodes, row sep=2em,
column sep=2em, text height=2.5ex, text depth=1.25ex]
{H_1^S(X,\Z/m\Z) & H_0^S(Z',\Z/m\Z) \\
H^1_t(X,\Z/m\Z)^* &  H^0_\et(Z',\Z/m\Z)^*,  \\
};
\path[->,font=\scriptsize] (m-1-1) edge node[auto]{$\delta$} (m-1-2);
\path[->,font=\scriptsize] (m-1-1) edge node[auto]{$\rec_{1,X}$} (m-2-1);
\path[->,font=\scriptsize] (m-2-1) edge node[auto]{$\delta^*$} (m-2-2);
\path[->,font=\scriptsize] (m-1-2) edge node[auto]{$\rec_{0,X}$} (m-2-2);
\end{tikzpicture}
\]
commutes. Here $\delta$ is the boundary map of Proposition~\ref{exact-suslin} and $\delta^*$ is the dual of the boundary map of Proposition~\ref{exact-etale}.
\end{proposition}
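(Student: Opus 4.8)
The plan is to reinterpret the asserted commutativity as an adjunction between the two boundary maps. Write $\delta_S\colon H_1^S(X,\Z/m\Z)\to H_0^S(Z',\Z/m\Z)$ for the Suslin boundary of Proposition~\ref{exact-suslin} and $\delta_t\colon H^0_\et(Z',\Z/m\Z)\to H^1_t(X,\Z/m\Z)$ for the \'etale boundary of Proposition~\ref{exact-etale}, so that $\delta^*=\delta_t^*$. Unwinding the definitions of $\rec_{1,X}$ and of the right-hand vertical map (which is $\rec_{0,Z'}$, an isomorphism by Lemma~\ref{rec0iso}), the square commutes if and only if
\[
\langle \delta_S(a),\,b\rangle_{Z'}=\langle a,\,\delta_t(b)\rangle_{X}\qquad\text{in }\Z/m\Z
\]
for all $a\in H_1^S(X,\Z/m\Z)$ and $b\in H^0_\et(Z',\Z/m\Z)$, the left pairing being the degree-$0$ pairing on $Z'$ and the right one the degree-$1$ pairing on $X$. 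This is the identity I would prove.

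I would compute both sides at the (co)chain level, realising each boundary as a genuine connecting homomorphism. On the \'etale side I use the injective resolution $I^\bullet$ of $\Z/m\Z$ on $(\Sch/k)_h$ made available at the end of Section~\ref{chech-sect}; since the abstract blow-up square is an $h$-descent (distinguished) square and the $I^j$ are injective, hence flasque, evaluating them on the square gives a short exact Mayer--Vietoris sequence of complexes
\[
0\to I^\bullet(X)\xrightarrow{(\pi^*,\,i^*)} I^\bullet(X')\oplus I^\bullet(Z)\xrightarrow{(i'^*,\,-\pi'^*)} I^\bullet(Z')\to 0
\]
whose connecting map is $\delta_t$. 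Thus for $b$ represented by $\beta_{Z'}\in I^0(Z')$, a lift $(\beta_{X'},\beta_Z)$ with $i'^*\beta_{X'}-\pi'^*\beta_Z=\beta_{Z'}$ produces the representative $\eta\in I^1(X)$ of $\delta_t(b)$ determined by $\pi^*\eta=d\beta_{X'}$ and $i^*\eta=d\beta_Z$. On the Suslin side I use Proposition~\ref{exact-suslin}: passing through the quasi-isomorphism $(C_\bullet(X')\oplus C_\bullet(Z))/C_\bullet(Z')\hookrightarrow C_\bullet(X)$ valid in degrees $\le 1$, I represent $a$ by a cycle of the special form $\pi_*\alpha'+i_*\alpha_Z$ coming from a $1$-chain $\hat\alpha=(\alpha',\alpha_Z)$ on $X'\sqcup Z$, so that the $0$-cycle $w\in Z_0(Z')$ with $(0^*-1^*)\alpha'=i'_*w$ and $(0^*-1^*)\alpha_Z=-\pi'_*w$ represents $\delta_S(a)$.

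The computation then closes using the acyclicity of the simplex and a projection formula. Writing $\partial=(0^*-1^*)$ for the simplicial boundary, I pull $\eta$ back along $\alpha=\pi_*\alpha'+i_*\alpha_Z$ and apply the projection formula $(\pi_*\alpha')^*\eta\cong\alpha'^*\pi^*\eta$ and $(i_*\alpha_Z)^*\eta\cong\alpha_Z^*i^*\eta$ to obtain $\alpha^*\eta=d(\alpha'^*\beta_{X'}+\alpha_Z^*\beta_Z)$; by Corollary~\ref{simplexex} the primitive $\gamma=\alpha'^*\beta_{X'}+\alpha_Z^*\beta_Z$ then computes
\[
\langle a,\delta_t(b)\rangle_X=(0^*-1^*)\gamma=\langle\partial\alpha',\beta_{X'}\rangle_{X'}+\langle\partial\alpha_Z,\beta_Z\rangle_{Z}.
\]
Substituting $\partial\alpha'=i'_*w$ and $\partial\alpha_Z=-\pi'_*w$, using the (elementary) degree-$0$ projection formula over $Z'$, and using $i'^*\beta_{X'}-\pi'^*\beta_Z=\beta_{Z'}$, the right-hand side becomes $\langle w,\beta_{Z'}\rangle_{Z'}=\langle\delta_S(a),b\rangle_{Z'}$, as desired. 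The one substantial input is the degree-$1$ projection formula $(\pi_*\alpha')^*\eta\cong\alpha'^*\pi^*\eta$; since $\alpha'$ is finite over $\Delta^1$, the proper map $\pi$ is automatically finite on the support of $\alpha'$, so this reduces to the compatibility of the push-forward torsor of Section~\ref{torseur} with pull-back along finite correspondences (equivalently Proposition~\ref{compo} and Lemma~\ref{symprod}), and in particular does not require $X'$ to be regular.

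I expect the main obstacle to be exactly this projection formula for the a priori singular proper map $\pi\colon X'\to X$, together with the fact that the Suslin sequence of Proposition~\ref{exact-suslin} is not a short exact sequence of complexes on the nose: its cokernel $K_\bullet$ is only acyclic in degrees $\le 2$ (this is what \cite{blowup} and \cite{SS} provide). The latter forces one to replace $a$ by the special representative $\pi_*\alpha'+i_*\alpha_Z$ and to check that neither side of the identity depends on the auxiliary choices; independence follows from Proposition~\ref{pairing} and the well-definedness of $\delta_S$, but matching the signs of the Mayer--Vietoris sequence against those of the simplicial boundary, and verifying the projection formula in the presence of the inseparable-degree factors entering the push-forward torsor, are the points that genuinely require care.
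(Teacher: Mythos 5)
Your proposal is correct and follows essentially the same route as the paper: reduce to the adjunction identity $\langle \delta_S a,b\rangle_{Z'}=\langle a,\delta_t b\rangle_X$, compute both sides on an injective $h$-resolution $I^\bullet$ of $\Z/m\Z$ using a special representative $\widehat\alpha=(\alpha',\alpha_Z)$ lifted through the Suslin blow-up sequence and a lift $\widehat\beta$ of $b$ through the \'etale one, and close the computation with the functoriality of qfh/h-sheaf transfers (your ``projection formula'') plus the acyclicity of torsors on $\Delta^1$. The only cosmetic difference is that you obtain the lift of $b$ from termwise surjectivity of $I^0(X')\oplus I^0(Z)\to I^0(Z')$ (flasqueness of injectives), whereas the paper derives it from the acyclicity of the cokernel complex via proper base change; both work.
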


\begin{proof} We have to show that the diagram
\[
\begin{tikzpicture}[bij/.style={above,sloped,inner sep=0.5pt},ibij/.style={below,sloped,inner sep=2.5pt}]
\matrix (m) [matrix of math nodes, row sep=2em,
column sep=1em, text height=2.5ex, text depth=1.25ex]
{
H_1^S(X,\Z/m\Z) & \times & H^1_t(X,\Z/m\Z) && &\Z/m\Z\\
H_0^S(Z',\Z/m\Z) & \times & H^0_\et(Z',\Z/m\Z) && &\Z/m\Z\\
};
\path[->,font=\scriptsize] (m-1-3) edge node[auto]{$\langle \ , \ \rangle $} (m-1-6);
\path[->,font=\scriptsize] (m-2-3) edge node[auto]{$\langle \ , \ \rangle $} (m-2-6);
\path[->,font=\scriptsize] (m-1-1) edge  node[left]{$\delta$} (m-2-1);
\path[->,font=\scriptsize] (m-2-3) edge  node[right]{$\delta$} (m-1-3);
\draw[double,double distance=1mm] (m-1-6)--(m-2-6);
\end{tikzpicture}
\]
commutes.  Given $a\in H_1^S(X,\Z/m\Z)$ and $ b\in H^0_\et(Z',\Z/m\Z)$, we choose a representing correspondence $\alpha \in C_1(X,\Z/m\Z)=\Cor(\Delta^1, X)\otimes \Z/m\Z$ in such a way that it has a pre-image $\widehat \alpha \in C_1(X',\Z/m\Z) \oplus C_1( Z,\Z/m\Z)$ (see the proof of Proposition~\ref{exact-suslin}). By definition, $\delta a \in H^S_0(Z',\Z/m\Z)$ is represented by a correspondence $\gamma \in C_0(Z',\Z/m\Z)$ such that the diagram
\[
\begin{tikzpicture}
\matrix (m) [matrix of math nodes, row sep=2em,
column sep=2em, text height=2.5ex, text depth=1.25ex]
{\Delta^0 & \Delta^1 \\
Z' &  X' \amalg Z \\
};
\path[->,font=\scriptsize] (m-1-1) edge node[auto]{$0-1$} (m-1-2);
\path[->,font=\scriptsize] (m-1-1) edge node[left]{$\gamma$} (m-2-1);
\path[->,font=\scriptsize] (m-2-1) edge node[auto]{$i'-\pi'$} (m-2-2);
\path[->,font=\scriptsize] (m-1-2) edge node[auto]{$\widehat \alpha$} (m-2-2);
\end{tikzpicture}
\]
of correspondences commutes modulo~$m$. Next choose an  injective resolution $\Z/m\Z \to I^\bullet$  of $\Z/m\Z$ in the category of sheaves of $\Z/m\Z$-modules on $(\Sch/k)_h$  in order to compute the pairings (cf.\ the end of section \ref{chech-sect}). Consider the following diagram

\[
\begin{tikzcd}
\phantom{x}&&I^0(X')\oplus I^0(Z)\arrow[bend right]{lldd}[swap]{\widehat\alpha^*} \dar{d}\rar{i'^*-\pi'^*}&I^0(Z')\dar{d}\\
&I^1(X) \arrow[hookrightarrow]{r}{(\pi^*,i^*)}\dar{\alpha^*}& I^1(X')\oplus I^1(Z)\rar{i'^*-\pi'^*} \dar{\widehat\alpha^*} &I^1(Z')\\
I^0(\Delta^1)\rar{d}\dar{0^*-1^*}&I^1(\Delta^1) \dar{0^*-1^*}\arrow[dash,yshift=0.3ex]{r}\arrow[dash,yshift=-0.3ex]{r} &I^1(\Delta^1)\dar{0^*-1^*}\rar{d}&I^2(\Delta^1)\dar{0^*-1^*}\\
I^0(\Delta^0)\rar{d}&I^1(\Delta^0)\arrow[dash,yshift=0.3ex]{r}\arrow[dash,yshift=-0.3ex]{r}&I^1(\Delta^0)\rar{d}&I^2(\Delta^0)\\
\end{tikzcd}
\]
By the argument of \cite{MVW} Lemma 12.7, the sequence \[
0\to F(X) \to F(X')\oplus F(Z) \to F(Z')
\]
is exact for every $h$-sheaf $F$. Therefore the second line in the diagram is exact.
The proper base change theorem implies (cf.\ \cite{ge-duke}, 3.2 and 3.6) that
\[
 I^\bullet(X) \longrightarrow I^\bullet(X')\oplus I^\bullet(Z) \longrightarrow I^\bullet(Z') \stackrel{[1]}{\longrightarrow}
\]
is an exact triangle in $D(Ab)$. For the exact sequence of complexes
\[
0\to I^\bullet(X) \to I^\bullet(X')\oplus I^\bullet(Z) \to I^\bullet(Z') \to \mathit{coker}^\bullet \to 0,
\]
this implies that the complex $\mathit{coker}^\bullet$ is exact.  Therefore, $b \in \ker(I^0(Z')\to I^1(Z'))$ has a pre-image $\widehat \beta \in I^0(X')\oplus I^0(Z)$.
Then
\[
d\widehat \beta \in \ker (I^1(X')\oplus I^1(Z) \to I^1(Z')),
\]
and  there exists a unique $\varepsilon\in I^1(X)$ with $(\pi^*,i^*)(\varepsilon)=d\widehat \beta$  representing $\delta b \in H^1_t(X)$. We see that $\widehat \alpha^* (d\widehat \beta)= \alpha^* (\varepsilon)$.
It follows that
\[
d(\widehat \alpha^*(\widehat \beta))=\widehat \alpha^*(d\widehat \beta)= \alpha^*(\varepsilon) \in \ker(I^1(\Delta^1)\xrightarrow{0^*-1^*} I^1(\Delta^0)).
\]
By definition of $\langle \ , \ \rangle$, we obtain
\[
\langle a, \delta(b)\rangle = (0^*-1^*)\widehat \alpha^*\widehat \beta \in \ker(I^0(\Delta^0)\to I^1(\Delta^0))=\Z/m\Z.
\]
On the other hand, $\langle \delta a, b\rangle= \gamma^*(b) \in H^0_\et(\Delta^0)$ is represented by $\gamma^*\beta\in I^0(\Delta^0)$ and the commutative diagram of correspondences above implies
\[
\gamma^*\beta = \gamma^*(i'^*-\pi'^*)(\widehat \beta) = (0^*-1^*)\widehat \alpha^*\widehat \beta.
\]
This finishes the proof.
\end{proof}

\begin{proposition}\label{goodcurve}
Let $X$ be a normal, generically smooth, connected scheme of finite type over a field $k$ and let $M\subseteq H^1_\et(X,\Z/m\Z)$ be a finite subgroup. Then there exists a regular curve $C$ over $k$ and a finite morphism $\phi: C \to X$ such that $M$ has trivial intersection with the kernel of $\phi^*: H^1_\et(X,\Z/m\Z) \to H^1_\et(C,\Z/m\Z)$.
\end{proposition}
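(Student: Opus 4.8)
The plan is to pass to the covering classified by $A$ and reduce the statement to a Bertini-type irreducibility assertion. Since $X$ is connected, $H^1_\et(X,\Z/m\Z)=\Hom(\pi_1^\et(X),\Z/m\Z)$, and the finite subgroup $A$ corresponds to the finite quotient $G:=\pi_1^\et(X)/\bigcap_{a\in A}\ker a$, a finite abelian group annihilated by $m$; by construction $A$ is naturally a subgroup of $\Hom(G,\Z/m\Z)$ separating the points of $G$. Let $\pi\colon Y\to X$ be the connected finite \'etale Galois covering with group $G$ classified by the surjection $\pi_1^\et(X)\twoheadrightarrow G$; note that $Y$ is normal and irreducible. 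For a connected regular curve $C$ with a finite morphism $\phi\colon C\to X$, the composite $\pi_1^\et(C)\to\pi_1^\et(X)\to G$ is surjective exactly when $Y\times_X C$ is connected, and in that case every $0\neq a\in A$ is a nonzero character of $G$, so $\phi^*(a)=a\circ(\pi_1^\et(C)\to G)\neq 0$. Hence it suffices to produce a regular curve $C$, finite over $X$, with $Y\times_X C$ connected.

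First I would reduce to the quasi-projective case. Replacing $X$ by a dense affine open $U$ (and $Y$ by the dense open, still irreducible, $\pi^{-1}(U)$) does no harm: a closed integral curve $C_0\subset U$ with $Y\times_X\widetilde{C_0}$ connected yields the desired curve over all of $X$ by passing to the normalization $C:=\widetilde{\overline{C_0}}$ of the closure $\overline{C_0}\subset X$. Indeed $C$ is a regular curve, finite over $X$ (a closed immersion followed by a normalization), and $Y\times_X C$ is connected because a proper clopen component would restrict to a nonempty clopen subset of the connected $Y\times_X\widetilde{C_0}$, which sits densely in it. Thus I may assume $X\subset\A^N\subset\P^N$ is affine, normal, generically smooth, irreducible of dimension $d$, with $\pi\colon Y\to X$ finite \'etale and $Y$ irreducible; write $f\colon Y\to\P^N$ for the composite.

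The curve is then cut out by a general linear space. Choosing general hyperplanes $H_1,\dots,H_{d-1}$ in $\P^N$ and setting $L=H_1\cap\cdots\cap H_{d-1}$, I would apply Jouanolou's Bertini irreducibility theorem one hyperplane at a time, simultaneously to $X\hookrightarrow\P^N$ and to $f\colon Y\to\P^N$: at the $j$-th step the image in $\P^N$ of the piece being cut has dimension $d-j+1\ge 2$, so a general hyperplane preimage stays irreducible. Consequently both $C_0:=X\cap L$ and $Y\times_X C_0=f^{-1}(L)$ are irreducible curves, and since $X$ is generically smooth a general $C_0$ is generically smooth, hence integral. As $Y\times_X C_0$ is irreducible and finite surjective over the integral curve $C_0$, its generic fibre over $\eta_{C_0}$ is the spectrum of a field, so the pullback $Y\times_X\widetilde{C_0}$ to the normalization is again integral, in particular connected. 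Taking $C=\widetilde{C_0}$, a regular curve over the perfect field $k$, completes the construction.

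The main obstacle is precisely this Bertini irreducibility input, keeping the pulled-back cover $f^{-1}(L)$ irreducible. Over an infinite field it is Jouanolou's theorem as used above; over a finite field general linear sections are unavailable, and one must instead cut by hypersurfaces of large degree and invoke a Poonen-type Bertini irreducibility theorem over finite fields, running the same induction with high-degree complete intersections in place of $L$. In the application to Theorem~\ref{main} the ground field is algebraically closed, so the infinite-field argument already suffices. The remaining points—preservation of irreducibility and of generic reducedness, the nonemptiness and correct dimension of the sections, and the identification $Y\times_X C_0=f^{-1}(L)$—are routine.
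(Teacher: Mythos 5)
Your proposal is correct in the case the paper actually needs ($k$ algebraically closed), but it takes a genuinely different route. Both arguments share the same initial reduction: translate the claim into finding a regular curve $C$ finite over $X$ such that the connected covering $Y\to X$ classified by $A$ remains connected after pullback to $C$, and both shrink $X$ to a dense open using injectivity of restriction on $H^1$ for normal schemes (the paper shrinks to the smooth locus, you to an affine open). From there the paper avoids Bertini altogether: it picks a separating transcendence basis to get a rational map $X\dashrightarrow \P^d_k$, base changes to $k(t)$, and uses that $k(t)$ is Hilbertian for \emph{every} field $k$ (Fried--Jarden, Thm.\ 12.10) to find a rational point of $\P^d_{k(t)}$ with a single preimage in $Y_t$; the curve is then the normalization of the closure in $X$ of (a curve through) the image point. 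Your route instead cuts by general hyperplanes and invokes Jouanolou's Bertini irreducibility theorem for $Y\to\P^N$. This is a legitimate and more geometric argument, but it proves the proposition as stated only for infinite $k$ (and even there one should note that Jouanolou's theorem is usually formulated for geometrically irreducible $Y$, which a connected \'etale cover of $X$ need not be over a non-closed field; this is repairable but not automatic). Over a finite field you must, as you acknowledge, substitute the Charles--Poonen Bertini irreducibility theorem, a step you do not carry out; the paper's Hilbertian trick with the auxiliary variable $t$ is precisely what makes the argument uniform in $k$. The remaining details of your construction (integrality of the general linear section, connectedness of $Y\times_X C$ from connectedness over the dense open $\widetilde{C_0}$, finiteness of $C\to X$) are sound.
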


\begin{proof}
For any normal scheme $Z$ and  dense open subscheme $Z'\subset Z$, the induced map $H^1_\et(Z,\Z/m\Z) \to H^1_\et(Z',\Z/m\Z)$ is injective. Hence we may replace $X$ by an open subscheme and assume that $X$ is smooth. Let $Y \to X$ be the finite abelian \'{e}tale covering corresponding to the kernel of $\pi_1^\ab(X) \twoheadrightarrow M^*$. We have to find a regular curve $C$ and a finite morphism $C\to X$ such that $C\times_XY$ is connected.

Choose a separating transcendence basis $t_1,\ldots, t_d$ of $k(X)$ over $k$. This yields a rational map $X \to \P^d_{k}$.
Let $t$ be another indeterminate and let $X_t$ (resp.\ $Y_t$) be the base change of $X$ (resp.\ $Y$) to the rational function field $k(t)$.  Consider the composition $\phi: Y_t\to X_t \to \P^d_{k(t)}$. Since $k(t)$ is Hilbertian \cite{FJ}, Thm.\ 12.10, we can find a rational point $P\in \P^d_{k(t)}$ over which $\phi$ is defined and such that $P$ has exactly one pre-image $y_t$ in $Y_t$. The image $x_t \in X_t$ of $y_t$ has exactly one pre-image in $Y_t$. Let $x$ be the image of $x_t$ in $X$. If $\mathrm{trdeg}_k k(x)=1$ put $x'=x$, if $\mathrm{trdeg}_k k(x)=0$ (i.e., $x$ is a closed point in $X$) choose any $x'\in X$ with $\mathrm{trdeg}_k k(x')=1$ such that $x$ is a regular point of the closure of $x'$. In both cases the normalization $C$ of the closure of $x'$ in $X$ is a regular curve with the desired property.
\end{proof}

\section{Proof of the main theorem}\label{proof-sect}

In this section we prove our main result. We say that ``resolution of singularities holds for schemes of dimension $\leq d$ over $k$'' if the following two conditions are satisfied.

\medskip
\begin{compactitem}
\item[\rm (1)] For any integral separated scheme of finite type $X$ of dimension $\leq d$ over~$k$,  there exists a
projective birational morphism $Y \to X$ with $Y$ smooth over~$k$ which is an isomorphism over the regular locus of $X$.
\item[\rm (2)] For any integral smooth scheme $X$ of dimension $\leq d$ over $k$ and any birational proper morphism
$Y \to X$ there exists a tower of morphisms $X_n \to X_{n-1} \to \dots \to X_0= X$, such that $X_n \to X_{n-1}$  is a blow-up with a smooth center for $i=1,\ldots,n$, and such that the composite morphism $X_n \to X$ factors through $Y \to X$.
\end{compactitem}

\begin{theorem}[\bf =Theorem~\ref{main}]\label{main-besser} Let $k$ be an algebraically closed field of characteristic $p\ge 0$, $X$ a separated scheme of finite type over $k$ and $m$ a natural number. Then
\[
\rec_X: H_1^S(X,\Z/m\Z) \lang \pi_1^{t,\ab}(X)/m
\]
is surjective. It is an isomorphism of finite abelian groups if\/ $(m,p)=1$, and for general $m$ if resolution of singularities holds for schemes of dimension $\leq \dim X + 1$ over~$k$.
\end{theorem}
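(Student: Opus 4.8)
The plan is to derive the theorem from two independent inputs: surjectivity of $\rec_X$ for every $X$, together with an order count showing that source and target are finite of the same cardinality. Since $\pi_1^{t,\ab}(X)/m = H^1_t(X,\Z/m\Z)^*$ is finite, a surjection between finite groups of equal order is an isomorphism, so these two facts combine to give the statement. I would organize the order count into the resolution-free prime-to-$p$ regime, handled via Suslin--Voevodsky, and the $p$-primary regime, handled via resolution of singularities.

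First I would prove surjectivity. Dually, this means that the map $H^1_t(X,\Z/m\Z) \to H_1^S(X,\Z/m\Z)^*$ induced by the pairing of Proposition~\ref{pairing} is injective; equivalently, for every $0\neq b\in H^1_t(X,\Z/m\Z)$ there should be a class $a$ with $\langle a,b\rangle\neq 0$. I would first dispose of the classes that survive on a normalization. After passing to $X_{\mathrm{red}}$ (which changes neither side) and to an irreducible component, let $\nu\colon X^\nu\to X$ be the normalization; over the algebraically closed, hence perfect, field $k$ the scheme $X^\nu$ is normal and generically smooth. If $\nu^*b\neq 0$, then Proposition~\ref{goodcurve} produces a regular curve $C$ and a finite morphism $\phi\colon C\to X^\nu$ with $\phi^*\nu^*b\neq 0$; Theorem~\ref{vergleichepaarung} makes the pairing on $C$ perfect, so some $a_C$ satisfies $\langle a_C,\phi^*\nu^*b\rangle\neq 0$, and functoriality of $\rec$ yields $\langle(\nu\phi)_*a_C,b\rangle\neq 0$. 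The remaining classes, those killed by $\nu^*$, are governed by the normalization blow-up square: by Proposition~\ref{exact-etale} such a $b$ lies in the image of $\delta$ from $H^0_\et(Z')$, where $Z$ is the non-normal locus and $Z',X^\nu$ have strictly smaller dimension, so I would handle them by induction on $\dim X$, using the commutativity of Proposition~\ref{compare_ex_seq} together with the fact (Lemma~\ref{rec0iso}) that $\rec_{0}$ is always an isomorphism.

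Next I would settle equal orders when $(m,p)=1$. Here $\Z/m\Z$ is $p$-torsion free, so by the discussion preceding the definition of $H^1_t$ one has $H^1_t(X,\Z/m\Z)=H^1_\et(X,\Z/m\Z)$, a finite group. Suslin--Voevodsky \cite{SV} provide an isomorphism $\alpha_X\colon H^1_\et(X,\Z/m\Z)\xrightarrow{\sim}H^1_S(X,\Z/m\Z)$ with Suslin cohomology. Since the Suslin complex consists of free abelian groups with finitely generated homology, the universal coefficient sequences force $|H_1^S(X,\Z/m\Z)|=|H^1_S(X,\Z/m\Z)|$. Combining these, $|H_1^S(X,\Z/m\Z)|=|H^1_\et(X,\Z/m\Z)|=|H^1_t(X,\Z/m\Z)^*|=|\pi_1^{t,\ab}(X)/m|$, all finite, and with the surjectivity above this proves the isomorphism for $(m,p)=1$.

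Finally it remains to match the $p$-primary orders, i.e.\ to show $|H_1^S(X,\Z/p^r\Z)|=|H^1_t(X,\Z/p^r\Z)|$, and I expect this to be the crux. I would argue by induction on $\dim X$. Resolution condition~(1) lets me arrange an abstract blow-up square whose $X'$ is smooth, while condition~(2) lets me factor the relevant modifications into blow-ups with smooth centers; the auxiliary schemes $Z,Z'$ drop in dimension, and the ``$+1$'' in the hypothesis is precisely what permits resolving the models that occur one dimension up. The base case is $X$ smooth and projective, where $H^1_t=H^1_\et$ and the required equality of orders, indeed the isomorphism, is the main result of \cite{ge-annals}. The inductive step then compares the exact sequence of Proposition~\ref{exact-suslin} with the dual of that of Proposition~\ref{exact-etale} through the reciprocity maps, invoking Proposition~\ref{compare_ex_seq} and functoriality for commutativity and Lemma~\ref{rec0iso} for the degree-zero terms, so that the five-lemma propagates both finiteness and the equality of orders from $X',Z,Z'$ to $X$. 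The hard part is exactly this $p$-primary count: unlike the prime-to-$p$ case it is not formal, it genuinely consumes resolution of singularities, and it demands careful control of the finiteness of $H_1^S(X,\Z/p^r\Z)$ along the blow-up sequences, with the smooth projective duality of \cite{ge-annals} as the indispensable external input.
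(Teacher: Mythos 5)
Your overall strategy --- surjectivity for every $X$ plus an equality of orders, split into a prime-to-$p$ regime handled by Suslin--Voevodsky and a $p$-primary regime handled by resolution --- is exactly the paper's, and your surjectivity argument (reduction to the normal case through the normalization square using Propositions~\ref{exact-etale}, \ref{exact-suslin}, \ref{compare_ex_seq} and Lemma~\ref{rec0iso}, then Proposition~\ref{goodcurve} and the perfectness of the curve pairing from Theorem~\ref{vergleichepaarung}) matches Step~1 of the paper. One justification in your prime-to-$p$ count is wrong, though repairable: the integral Suslin complex does \emph{not} have finitely generated homology (already $H_0^S(C,\Z)$ of a smooth projective curve contains $\cA(k)$, a divisible group of infinite rank), so the universal coefficient sequence over $\Z$ does not give $|H_1^S(X,\Z/m\Z)|=|H^1_S(X,\Z/m\Z)|$ as you claim. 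The equality is nevertheless true because $\Hom(C_\bullet(X),\Z/m\Z)=\Hom_{\Z/m\Z}(C_\bullet(X)\otimes\Z/m\Z,\Z/m\Z)$ and $\Z/m\Z$ is an injective module over itself, whence $H^1_S(X,\Z/m\Z)\cong H_1^S(X,\Z/m\Z)^*$ directly.

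The genuine gap is in the $p$-primary step. Induction through abstract blow-up squares and the five-lemma (resolution condition~(1) plus Chow's lemma) only reduces you to $X$ smooth and \emph{quasi-projective}; it cannot reduce further to the smooth projective case, because the open immersion $X\hookrightarrow\bar X$ into a smooth compactification is not an abstract blow-up square, so Propositions~\ref{exact-etale} and~\ref{exact-suslin} say nothing about it. Since the duality of \cite{ge-annals} applies only to smooth projective varieties, you still owe a comparison of $H_1^S(X,\Z/p^r\Z)$ with $H_1^S(\bar X,\Z/p^r\Z)$ and of $H^1_t(X,\Z/p^r\Z)$ with $H^1_\et(\bar X,\Z/p^r\Z)$. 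The \'{e}tale/tame side is Proposition~\ref{numerical} (numerical tameness along the boundary), but the Suslin side is a substantive separate result, Proposition~\ref{suslin-mod p}: invariance of mod-$p^r$ Suslin homology under passing to a dense open subscheme of a smooth scheme, proved via Gysin triangles in the category of motives and the vanishing of the restriction of $R(s)$ to small Nisnevich sites of dimension $<s$ (logarithmic de Rham--Witt sheaves); this is also where the hypothesis of resolution in dimension $\le\dim X+1$ is actually consumed, via the method of \cite{SS}. Without this ingredient the $p$-primary order count does not close, so your proposal is missing a key idea at precisely the point you identify as the crux.
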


The proof will occupy the rest of this section. Following the notation of Section~\ref{exact-sect}, we write $H^0_t=H^0_\et$ and consider the maps
\[
\rec_{i,X}: H_i^S(X,\Z/m\Z) \to H^i_t(X,\Z/m\Z)^*
\]
for $i=0,1$ (i.e., $\rec_X=\rec_{1,X}$).
Given a morphism $X'\to X$, we have a commutative diagram of pairings defining $\rec_i$ for $i=0,1$.

\[
\begin{tikzpicture}[bij/.style={above,sloped,inner sep=0.5pt},ibij/.style={below,sloped,inner sep=2.5pt}]
\matrix (m) [matrix of math nodes, row sep=2em,
column sep=1em, text height=2.5ex, text depth=1.25ex]
{
H_i^S(X',\Z/m\Z) & \times & H^i_t(X',\Z/m\Z) && &\Z/m\Z\,\\
H_i^S(X,\Z/m\Z) & \times & H^i_t(X,\Z/m\Z) && &\Z/m\Z.\\
};
\path[->,font=\scriptsize] (m-1-3) edge node[auto]{$\langle \ , \ \rangle $} (m-1-6);
\path[->,font=\scriptsize] (m-2-3) edge node[auto]{$\langle \ , \ \rangle $} (m-2-6);
\path[->] (m-1-1) edge  node[left]{$\scriptstyle \phi_*$} (m-2-1);
\path[->] (m-2-3) edge  node[right]{$\scriptstyle \phi^*$} (m-1-3);
\draw[double,double distance=1mm] (m-1-6)--(m-2-6);
\end{tikzpicture}
\]

\medskip\noindent
\emph{Step 1}: $\rec_{1,X}$ is surjective for arbitrary $X$.

\medskip\noindent
We may assume that $X$ is reduced and proceed by induction on $d=\dim X$. The case $\dim X=0$ is trivial. Consider the  normalization morphism $X' \to X$, which is an isomorphism outside a closed subscheme $Z\subset X$ of dimension $\leq d-1$.
Using the exact sequences of Propositions~\ref{exact-etale} and \ref{exact-suslin}, which are compatible by Proposition~\ref{compare_ex_seq} and the fact that $\rec_{0,X}$ is an isomorphism by Lemma~\ref{rec0iso}, a diagram chase shows that it suffices to show surjectivity of $\rec_{1,X}$ for normal schemes.

Let $X$ be normal.  Since $H^1_t(X,\Z/m\Z)$ is finite, it suffices to show that the pairing defining $\rec_{1,X}$ has a trivial right kernel. We may assume that $X$ is connected. Let $b\in H^1_t(X,\Z/m\Z)$ be arbitrary but non-zero. By Proposition~\ref{goodcurve}, we find a morphism $\phi: C \to X$ with $C$ a smooth curve such that $\phi^*(b)\in H^1_\et(C,\Z/m\Z)$ is non-zero. Since the pairing for $C$ is perfect by Theorem~\ref{vergleichepaarung}, the pairing for $X$ has a trivial right kernel.

\medskip\noindent
\emph{Step 2}: Theorem~\ref{main-besser} holds if  $(m,p)=1$.

\medskip\noindent
If $(m,p)=1$,  $H_1^S(X,\Z/m\Z)$ and $H^1_\et(X,\Z/m\Z)^*$ are isomorphic finite abelian groups by \cite{SV}. In particular, they have the same order. Hence the surjective homomorphism $\rec_{1,X}$ is an isomorphism.

\medskip\noindent
\emph{Step 3}: Theorem~\ref{main-besser} holds for arbitrary $X$ if $m=p^r$ and resolution of singularities holds for schemes of dimension $\leq \dim X +1$ over $k$.

\medskip\noindent
We may assume that $X$ is reduced. Using resolution of singularities and Chow's Lemma, we obtain a morphism $X'\to X$ with $X'$ smooth and quasi-projective, which is an isomorphism over a dense open subscheme of $X$.  Using the exact sequences of Propositions~\ref{exact-etale} and \ref{exact-suslin}, Lemma~\ref{rec0iso}, \emph{Step~1}, induction on the dimension and the  five-lemma,  it suffices to show the result for smooth, quasi-projective schemes.

Let $X$ be smooth, quasi-projective and let $\bar X$ be a smooth, projective variety containing $X$ as a dense open subscheme. Then, by \cite[\S 5]{ge-annals}, we have an isomorphism
\[
H^1_\et(\bar X,\Z/p^r\Z)^* \cong  \mathrm{CH}_0(\bar X,1,\Z/p^r\Z).
\]
Furthermore, by \cite[\textrm{Thm.\ 2.7}]{SS} (which makes the assumption $\dim \leq 2$ but does not use it in its proof), we have an isomorphism
\[
\mathrm{CH}_0(\bar X,1,\Z/p^r\Z) \cong H_1^S(\bar X,\Z/p^r\Z).
\]
By Proposition~\ref{suslin-mod p} below, the natural homomorphism
\[
H_1^S(X ,\Z/p^r\Z) \to H_1^S(\bar X ,\Z/p^r\Z)
\]
is an isomorphism of finite abelian groups and by Proposition~\ref{numerical}, we have an isomorphism
\[
H^1_\et(\bar X,\Z/p^r\Z) \stackrel{\sim}{\to} H^1_t(X,\Z/p^r\Z).
\]
Hence the finite abelian groups $H^1_t(X,\Z/p^r\Z)^*$ and $H_1^S(X ,\Z/p^r\Z)$ are isomorphic, in particular, they have the same order. Since $\rec_{1,X}$ is surjective, it is an isomorphism.

\bigskip
In order to conclude the proof of Theorem~\ref{main-besser} it remains to show

\begin{proposition}\label{suslin-mod p}
Let $k$ be a perfect field, $X\in \Sch/k$ smooth,  $U\subset X$ a dense open subscheme and $n\geq 0$ an integer. Assume that resolution of singularities holds for schemes of dimension $\leq \dim X +n$ over $k$. Then for any $r\geq 1$ the natural map
\[
H_i^S(U ,\Z/p^r\Z) \to H_i^S(X ,\Z/p^r\Z)
\]
is an isomorphism of finite abelian groups for $i=0,\ldots,n$.
\end{proposition}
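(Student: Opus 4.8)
The strategy is to interpret the comparison map as coming from a short exact sequence of presheaves with transfers and then to deduce the vanishing of the relative homology from a geometric descent property, following the template of the proof of Proposition~\ref{exact-suslin}. Write $c(Y)$ for the presheaf with transfers $W \mapsto \Cor(W,Y)$ on $\Sm/k$. Since $U \hookrightarrow X$ is an open immersion, the induced map $c(U) \to c(X)$ is a monomorphism, and I set $F := \mathit{coker}\big(c(U) \to c(X)\big)$, so that
\[
0 \to c(U) \to c(X) \to F \to 0
\]
is exact. As in Proposition~\ref{exact-suslin}, let $F_\bullet$ be the complex of presheaves with $F_n(W) = F(W \times \Delta^n)$, let $(F_\bullet)_\Nis$ be the associated complex of Nisnevich sheaves, and tensor with $\Z/p^r\Z$. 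Because each $F(\Delta^n)$ is a free abelian group, the displayed sequence stays exact after tensoring with $\Z/p^r\Z$ and after evaluation at $\Spec k$; the resulting long exact homology sequence shows that it is enough to prove $H_i\big(F_\bullet(\Spec k) \otimes \Z/p^r\Z\big)=0$ in the appropriate range of degrees, and this in turn follows from the vanishing of the homology sheaves $\mathcal{H}_i\big((F_\bullet)_\Nis \otimes \Z/p^r\Z\big)$ upon evaluation at $\Spec k$.

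By \cite{SS}, Thm.~2.4, applied to $F\otimes\Z/p^r\Z$ and using that resolution of singularities holds for schemes of dimension $\le \dim X + n$, these homology sheaves vanish in the required range as soon as $F$ enjoys the following descent property: for every smooth $W$ over $k$ with $\dim W \le n$ and every class $x \in F(W)$, there is a proper birational morphism $\phi\colon V \to W$ with $V$ smooth such that $\phi^*(x) \in p^r F(V)$. This is the heart of the matter. A class $x$ is represented by a finite correspondence $\alpha \in \Cor(W,X)$ whose support $T \subset W \times X$ is a cycle of dimension $\dim W \le n$, finite over $W$. Choosing (again by resolution) a smooth projective compactification $\bar X \supset X$ and passing to the closure $\bar T \subset W \times \bar X$, all the geometry takes place inside a scheme of dimension $\le \dim W + \dim X \le \dim X + n$, which is exactly the range in which resolution is available and which explains the hypothesis.

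It remains to establish the descent property. Set $Z := (X \smallsetminus U)_{\mathrm{red}}$, a closed subscheme of dimension $< \dim X$; the class $x$ is non-trivial only because $T$ meets the boundary $W \times Z$. After a proper birational modification of $W$ and an embedded resolution of $\bar T$ — legitimate in the available dimension range, and reducing $Z$ to a smooth divisor by an inner induction on $\dim Z$ — one is left with a finite correspondence whose support is, generically along the boundary, a covering of $W$ of $p$-power degree ramified along a boundary divisor of a regular scheme. At this point the characteristic enters: by the numerical tameness argument of Proposition~\ref{numerical}, a covering of $p$-power degree is numerically tamely, hence unramified, along such a divisor. Consequently, exactly as in the multiplicity computation underlying Theorem~\ref{vergleichepaarung} (where each point of $\gamma^*(0)$ occurred with multiplicity divisible by $m$), the boundary part of $\alpha$ occurs with multiplicity divisible by $p^r$ and therefore dies in $F(V) \otimes \Z/p^r\Z$. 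This is precisely where coefficients $\Z/p^r\Z$ with $p = \ch(k)$ are indispensable: for coefficients prime to $p$ no such divisibility is available and the statement fails. I expect this geometric step — performing the resolution of the support cycles and controlling the boundary multiplicities so that they land in $p^r F(V)$ — to be the main obstacle.

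Finally, once the maps are known to be isomorphisms, the assertion that they are isomorphisms of \emph{finite} abelian groups follows by comparison with the smooth projective case. Using resolution, embed $X$ as a dense open subscheme of a smooth projective variety $\bar X$; applying the isomorphism just established to $X \subset \bar X$ (and, if desired, to $U \subset \bar X$) identifies $H_i^S(X,\Z/p^r\Z)$ with $H_i^S(\bar X,\Z/p^r\Z)$ for $i \le n$, and the latter group is finite by \cite[\S 5]{ge-annals} together with \cite[Thm.~2.7]{SS}. Hence all the groups in question are finite, and the proof is complete.
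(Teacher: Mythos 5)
Your formal reduction (the short exact sequence $0\to c(U)\to c(X)\to F\to 0$ of presheaves with transfers, the associated long exact sequence, and the appeal to \cite{SS}, Thm.\ 2.4) is a reasonable template, but the step you yourself flag as ``the heart of the matter'' is not only unproven, it is false. The hypothesis of \cite{SS}, Thm.\ 2.4 must be verified for \emph{all} smooth $W$ up to the relevant dimension bound, in particular for $W=\Spec(k)$. There $F(\Spec(k))=\Cor(\Spec(k),X)/\Cor(\Spec(k),U)$ is the free abelian group on the closed points of $X\smallsetminus U$, the only proper birational morphism $V\to\Spec(k)$ with $V$ smooth is an isomorphism, and the class of a single boundary point is not divisible by $p^r$. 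So the cokernel presheaf of an open immersion does not satisfy the required descent property, even modulo $p^r$ --- in contrast to the cokernel of $(\pi_*,i_*)$ for an abstract blow-up square, where the Suslin--Voevodsky platification results apply. The vanishing of the relative Suslin homology of $U\subset X$ is a statement about the homology of the simplicial object $F(\Delta^\bullet)$ (cycles become trivial via correspondences from $\Delta^1$), not about divisibility in $F$ itself, so Thm.\ 2.4 is the wrong tool here. The tameness heuristic also does not apply: an element of $\Cor(W,X)$ is an arbitrary finite cycle over $W$, not an abelian covering of $p$-power degree, and nothing forces its intersection multiplicities along $W\times(X\smallsetminus U)$ to be divisible by $p^r$.

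The paper's proof takes an entirely different route. Writing $H_i^S(X,R)=\Hom_\DM(R[i],M(X,R))$ with $R=\Z/p^r\Z$, it stratifies $X\smallsetminus U$ by smooth pieces $Z_j$ and uses the Gysin (localization) triangles, so that the difference between $M(U,R)$ and $M(X,R)$ is controlled by motives $M(Z_j,R)(c)[2c]$ with $c\geq 1$. The required vanishing of $\Hom_\DM(R[i],M(Z,R)(s)[2s])$ for $s\geq 1$ then follows from the Geisser--Levine theorem, which identifies $R(s)$ on the small Nisnevich site of a smooth scheme $Y$ with $\nu_r^s[-s]$, hence with zero for $s>\dim Y$, combined with the duality method of \cite{SS}, Thm.\ 2.7 (this is where resolution of singularities in dimension $\leq\dim X+n$ is used). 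This is also the correct explanation of why the statement is special to $p$-coefficients: mod-$p^r$ motivic cohomology vanishes in weights above the dimension, whereas for $\ell\neq p$ the twist $\Z/\ell(s)\cong\mu_\ell^{\otimes s}$ does not vanish (and indeed $H_1^S(\A^1\smallsetminus\{0\},\Z/\ell)\neq H_1^S(\A^1,\Z/\ell)$). Your final paragraph on finiteness is fine, but it presupposes the isomorphism, so the gap above is fatal to the argument as it stands.
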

\begin{remark}
A proof of Proposition~\ref{suslin-mod p} for $n=1$ and $k$ algebraically closed  independent of the assumption on resolution of singularities would relax the condition in Theorem~\ref{main-besser} to:

There exists a smooth, projective scheme $\bar X'\in \Sch/k$, dense open subschemes $U'\subset X'\subset \bar X'$, $U\subset X$, and a surjective, proper morphism $X'\to X$ which induces an isomorphism $U'_\mathrm{red} \to U_\mathrm{red}$ .

\noindent
In particular, Theorem~\ref{main-besser} would hold for $\dim X\leq 3$ without any assumption on resolution of singularities \cite{CV}.
\end{remark}

\begin{proof}[Proof of Proposition~\ref{suslin-mod p}]
We set $R=\Z/p^r\Z$. By \cite{MVW}, Lecture 14, we have
\[
H_i^S(X,R) = \Hom_\DM(R [i], M(X,R)).
\]
Let $d=\dim X$.
Choose a series of open subschemes $U= X_d \subset \cdots  X_1 \subset  X_0=X$ such that
$Z_j:= X_{j}\smallsetminus X_{j+1}$ is smooth of dimension $j$ for $j=0,\ldots, d-1$. Using the exact Gysin triangles \cite[15.15]{MVW}
\[
M(X_{j+1},R) \to M(X_j,R) \to M(Z_j,R)(d-j)[2d-2j] \stackrel{[1]}{\to} M(X_{j+1},R)[1]
\]
 and induction, it suffices to show that
\[
\Hom_\DM (R[i],M(Z_j, R)(s)[2s]) = 0
\]
for $j=0,\ldots ,d-1$, $i=0,\ldots, n+1$ and $s\ge 1$. Using smooth compactifications of the $Z_j$ and induction again, it suffices to show
\[
\Hom_\DM (R[i],M(Z, R)(s)[2s]) = 0
\]
for $Z$ connected, smooth, projective, $i=0,\ldots, d-d_Z +n$ and $s\geq 1$.

By the comparison of higher Chow groups and motivic cohomology \cite{allagree} and by \cite{GL}, Thm.~8.5, the restriction of $R(s)$ to the small Nisnevich site of a smooth scheme $Y$ is isomorphic to $\nu_r^s[-s]$, where $\nu_r^s$ is the logarithmic de Rham Witt sheaf of Milne and Illusie. In particular, $R(s)|_Y$ is trivial for $s>\dim Y$.

For an \'{e}tale $k$-scheme $Z$ we obtain
\[
\Hom_\DM (R[i],M(Z, R)(s)[2s]) = H^{2s-i}_\Nis(Z, R(s))=0
\]
for $s\geq 1$ and all $i\geq 0$. Now assume $\dim Z \geq 1$. Using resolution of singularities for schemes of dimension $\le d+n$, the same method as in the proof of  \cite{SS}, Thm.\ 2.7 yields isomorphisms
\[
\Hom_\DM(R[i],M(Z,R))\cong \mathrm{CH}^{d_Z}(Z,i,R)
\]
for  $i=0,\ldots,d-1+n$. Applying this to $Z\times \P^s$ and using the decompositions given by the projective bundle theorem on both sides implies isomorphisms
\[
\Hom_\DM(R[i],M(Z,R)(s)[2s])\cong \mathrm{CH}^{d_Z+s}(Z,i,R)
\]
for $i=0,\ldots, d-1+n$.
By \cite{allagree}, the latter group is isomorphic to
\[
\Hom_\DM(M(Z,R)[2d_Z+2s-i], R(d_Z+s))\cong H^{2d_Z+2s-i}_\Nis(Z, R(d_Z+s)),
\]
which vanishes for $s\geq 1$. This finishes the proof.
\end{proof}

\begin{remark}
The assertion of Proposition~\ref{suslin-mod p} remains true for non-smooth $X$ if $U$ contains the singular locus of $X$ (see \cite{ge-docu}, Prop.\,3.3).
\end{remark}

\section{Comparison with the isomorphism of Suslin-Voe\-vod\-sky}\label{appendix}

\begin{theorem} Let $k$ be an algebraically closed field, $X\in\Sch/k$ and $m$ an integer prime to $\ch(k)$. Then the reciprocity isomorphism
\[
\rec_X: H_1^S(X,\Z/m\Z) \lang \pi_1^\ab(X)/m
\]
is the dual of the isomorphism
\[
\alpha_X: H^1_\et(X,\Z/m\Z) \lang H^1_S(X,\Z/m\Z)
\]
of\/ \cite{SV}, Cor.~7.8.
\end{theorem}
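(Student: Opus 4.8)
The plan is to translate the statement into a single identity between pairings and then reduce it to an explicit description of $\alpha_X$ on the level of complexes. Recall that the Suslin cohomology $H^1_S(X,\Z/m\Z)$ is the first cohomology of $\Hom(\Cor(\Delta^\bullet,X),\Z/m\Z)$, so there is a tautological evaluation pairing $\langle\cdot,\cdot\rangle_{\mathrm{ev}}\colon H_1^S(X,\Z/m\Z)\times H^1_S(X,\Z/m\Z)\to\Z/m\Z$, which is perfect because both groups are finite and identifies $H^1_S(X,\Z/m\Z)$ with $H_1^S(X,\Z/m\Z)^*$. Under this identification the assertion that $\rec_X$ is the dual of $\alpha_X$ is exactly the equality
\[
\langle a,b\rangle \;=\; \langle a,\alpha_X(b)\rangle_{\mathrm{ev}}
\qquad\text{for all } a\in H_1^S(X,\Z/m\Z),\ b\in H^1_\et(X,\Z/m\Z),
\]
where $\langle\cdot,\cdot\rangle$ is the pairing of Proposition~\ref{pairing}. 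Thus the whole task is to compute $\alpha_X$ concretely and to recognise the right-hand side as the left-hand side.

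Conceptually, both cohomology theories should be read off from one double complex. Evaluating a partial injective resolution $\Z/m\Z\hookrightarrow I^0\to I^1\to I^2$ of $\qfh$- (or $h$-) sheaves, as at the end of Section~\ref{chech-sect}, along the cosimplicial scheme $\Delta^\bullet$ and pulling back from $X$ along finite correspondences yields
\[
E^{p,q}=\Hom\big(\Cor(\Delta^p,X),\,I^q(\Delta^p)\big),
\]
with the differential of $I^\bullet$ in the $q$-direction and, in the $p$-direction, the alternating sum of the maps induced by the faces of $\Cor(\Delta^\bullet,X)$ and the cofaces of $\Delta^\bullet$. Filtering by $q$ first and using $H^{>0}_\et(\Delta^p,\Z/m\Z)=0$ (homotopy invariance together with Corollary~\ref{simplexex}) collapses $E$ onto the Suslin cochain complex $\Hom(\Cor(\Delta^\bullet,X),\Z/m\Z)$, giving $H^1(\mathrm{Tot}\,E)=H^1_S(X,\Z/m\Z)$; filtering by $p$ first and invoking the rigidity of $\Z/m\Z$ as a $\qfh$-sheaf with transfers (\cite{SV}, Thm.~10.2, 10.7) collapses $E$ onto $I^\bullet(X)$, giving $H^1(\mathrm{Tot}\,E)=H^1_\et(X,\Z/m\Z)$. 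Let $\bar\alpha_X\colon H^1_\et(X,\Z/m\Z)\to H^1_S(X,\Z/m\Z)$ denote the resulting comparison isomorphism.

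Next I would verify the identity $\langle a,\bar\alpha_X(b)\rangle_{\mathrm{ev}}=\langle a,b\rangle$ by an explicit cocycle computation. Represent $b$ by $\beta\in\ker(I^1(X)\to I^2(X))$ and lift it to a total $1$-cocycle of $E$; concretely one chooses, for each $\alpha\in\Cor(\Delta^1,X)$, an element $\gamma_\alpha\in I^0(\Delta^1)$ with $d\gamma_\alpha=\alpha^*(\beta)$, which exists since $\alpha^*(\beta)$ represents the torsor $\alpha^*(\T)$ that is trivial on $\Delta^1$ by Corollary~\ref{simplexex}. Pairing $\bar\alpha_X(b)$ with a class $a=[\alpha]$ represented by a Suslin cycle mod~$m$, so that $(0^*-1^*)(\alpha)\in m\,\Cor(\Delta^0,X)$, one finds $(0^*-1^*)(\gamma_\alpha)\in\ker(I^0(\Delta^0)\to I^1(\Delta^0))=\Z/m\Z$ (the obstruction $(0^*-1^*)\alpha^*(\beta)$ being $m$ times a class, hence zero in $\Z/m\Z$-coefficients), and this is the value of $\langle a,\bar\alpha_X(b)\rangle_{\mathrm{ev}}$. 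This is literally the defining recipe of $\langle a,b\rangle$ from the diagram at the end of Section~\ref{chech-sect}, so the two pairings agree once $\bar\alpha_X$ is in place.

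The substance of the argument, and the step I expect to be the main obstacle, is to prove that the edge isomorphism $\bar\alpha_X$ of $E^{p,q}$ coincides with the isomorphism $\alpha_X$ of \cite{SV}, Cor.~7.8, whose original definition zig-zags abstractly through $\Ext$-groups in the $\qfh$- and $h$-topologies. The decisive compatibility is that the correspondence action $\alpha^*$ on $I^\bullet$ built into $E^{p,q}$ is the one coming from the transfer structure of the $\qfh$-sheaves $I^q$ on which the Suslin--Voevodsky construction rests; this is exactly the compatibility recorded in Sections~\ref{torseur} and~\ref{chech-sect}, namely that our finite push-forward of torsors (and, on the sheaf level, the transfer on the rigid $\check C^\bullet$) induces the transfer of \cite{SV}, \S5. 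Granting this matching of transfers, identifying $\bar\alpha_X$ with $\alpha_X$ becomes a formal comparison between the two collapses of $E^{p,q}$ and the derived-category zig-zag defining $\alpha_X$ --- routine, if lengthy.
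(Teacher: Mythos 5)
Your reduction of the statement to the identity $\langle a,b\rangle=\langle a,\alpha_X(b)\rangle_{\mathrm{ev}}$, and your description of how to compute $\langle\cdot,\cdot\rangle$ via a partial injective resolution $\Z/m\Z\hookrightarrow I^0\to I^1\to I^2$ of $\qfh$-sheaves (lift $\alpha^*(\beta)$ to $\gamma_\alpha\in I^0(\Delta^1)$ and take $(0^*-1^*)(\gamma_\alpha)$), both match the paper; this much is already contained in the end of Section~\ref{chech-sect}. But the step you set aside as ``routine, if lengthy'' --- identifying your edge map $\bar\alpha_X$ with the Suslin--Voevodsky map $\alpha_X$ --- is the entire content of the theorem, and the compatibility of transfers that you name as the decisive point does not do that job: it is only what makes the pairing computable by injective resolutions in the first place, an input common to both sides. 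The map $\alpha_X$ is defined as the composite of the \emph{inverse} of the isomorphism $\Ext^1_\qfh(F_\bullet^\sim,\Z/m\Z)\xrightarrow{\sim}\Ext^1_\qfh(F,\Z/m\Z)$ (induced by $F\to F_\bullet^\sim$) with evaluation at $\Spec(k)$; to compare it with anything one must actually compute that inverse. The paper does this by (i) rewriting the Suslin complex as $\Hom_\qfh(L_{\Delta^\bullet},L_X)$ via \cite{SV}, Thm.~6.7, (ii) replacing a class $b$ by the pullback extension $0\to\Z/m\Z\to E\to L_X\to 0$ so that the pairing becomes $a\mapsto h(\alpha)$ for an explicit splitting datum $h:L_X(\Delta^1)\to E(\Delta^0)$, and (iii) constructing the inverse of the $F\to F_\bullet^\sim$ isomorphism sheaf-theoretically, by lifting sections of $F(U\times\Delta^1)$ to $E(U\times\Delta^1)$ after an \'etale refinement of $U$ (using smooth base change and $H^1_\et(\Delta^1,\Z/m\Z)=0$) and applying $0^*-1^*$. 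None of this appears in your outline, and it is not a formal consequence of the two collapses of a double complex.

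There is also a technical problem with the object you build the argument on: $E^{p,q}=\Hom\big(\Cor(\Delta^p,X),I^q(\Delta^p)\big)$ has no well-defined differential in the $p$-direction. A coface $\partial_i:\Delta^p\to\Delta^{p+1}$ induces $\partial_i^*:\Cor(\Delta^{p+1},X)\to\Cor(\Delta^p,X)$ and $\partial_i^*:I^q(\Delta^{p+1})\to I^q(\Delta^p)$; these point in directions that cannot be combined into a map $E^{p,q}\to E^{p+1,q}$ (nor $E^{p+1,q}\to E^{p,q}$), since there is no natural map $I^q(\Delta^p)\to I^q(\Delta^{p+1})$ and no pushforward on correspondences. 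The bicomplex that actually underlies the argument is $I^q(\Delta^p)=\Hom_\qfh(L_{\Delta^p},I^q)$ together with the maps $\alpha^*$ for individual correspondences, which is how the paper organizes diagrams (8)--(11); setting this up correctly forces you into exactly the $\Hom_\qfh(L_{(-)},-)$ framework and the extension-class computation that your proposal omits.
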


The proof will occupy the rest of this section.
Let $i: \Z/m\Z \hookrightarrow I^0$  be an injection into an injective sheaf in the category of $\Z/m\Z$-module sheaves on $(\Sch/k)_\qfh$ and put $J^1=\mathrm{coker} (i)$. Then (see the end of section \ref{chech-sect}) the pairing between $H_1^S(X,\Z/m\Z)$ and $H^1_\et(X,\Z/m\Z)$ constructed in Proposition~\ref{pairing} can be given as follows: For $a\in H_1^S(X,\Z/m\Z)$ choose a representing correspondence $\alpha\in \Cor(\Delta^1,X)$ and for $b\in H^1_\et(X,\Z/m\Z)$ a pre-image $\beta \in J^1(X)$. Consider the diagram
\[
\begin{tikzpicture}
\matrix (m) [matrix of math nodes, row sep=2em,
column sep=2em, text height=2.5ex, text depth=1.25ex]
{&  I^0(X) &   J^1(X) \\
&  I^0(\Delta^1) &   J^1(\Delta^1) &&(8) \\
\Z/m\Z&   I^0(\Delta^0) &   J^0(\Delta^0). \\
};
\path[->,font=\scriptsize] (m-1-2) edge node[auto]{} (m-1-3);
\path[->>,font=\scriptsize] (m-2-2) edge node[auto]{} (m-2-3);
\path[right hook->,font=\scriptsize] (m-3-1) edge node[auto]{} (m-3-2);
\path[->,font=\scriptsize] (m-3-2) edge node[auto]{} (m-3-3);
\path[->,font=\scriptsize] (m-1-2) edge node[auto]{$\alpha^*$} (m-2-2);
\path[->,font=\scriptsize] (m-1-3) edge node[auto]{$\alpha^*$} (m-2-3);
\path[->,font=\scriptsize] (m-2-2) edge node[auto]{$0^*-1^*$} (m-3-2);
\path[->,font=\scriptsize] (m-2-3) edge node[auto]{$0^*-1^*$} (m-3-3);
\end{tikzpicture}
\]
Then $\alpha^*(\beta)$ is the image of some element $\gamma \in I^0(\Delta^1)$ and $(0^*-1^*)(\gamma) \in \Z/m\Z=\ker (I^0(\Delta^0) \to J^1(\Delta^0))$ equals $\langle a, b\rangle$.

\bigskip
For $Y\in \Sch/k$ let  $\Z^\qfh_Y$  be the free $\qfh$-sheaf generated by $Y$. We set $A=\Z[1/\ch(k)]$ and $L_Y= \Z^\qfh_Y \otimes A$.  For smooth $U$ the homomorphism
\[
\Cor(U,X) \otimes A  \to \Hom_\qfh(L_U, L_X)
\]
is an isomorphism by \cite{SV}, Thm.\,6.7.
We have
\[
H^1_\et(X,\Z/m\Z)=H^1_{\qfh}(X,\Z/m\Z)= \Ext_{\qfh}^1(L_X,\Z/m\Z)
\]
\[=\mathrm{coker}( \Hom_\qfh(L_X,I^0) \to \Hom_\qfh(L_X,J^1)).
\]
The diagram $(8)$ can be rewritten in terms of Hom-groups as follows:
\[
\begin{tikzpicture}
\matrix (m) [matrix of math nodes, row sep=2em,
column sep=2em, text height=2.5ex, text depth=1.25ex]
{&  \Hom_\qfh(L_X,I^0) &   \Hom_\qfh(L_X,J^1)\, \\
&  \Hom_\qfh(L_{\Delta^1},I^0) &   \Hom_\qfh(L_{\Delta^1},J^1)\,  &(9)\\
\Z/m\Z&  \Hom_\qfh(L_{\Delta^0},I^0) &   \Hom_\qfh(L_{\Delta^0},J^1). \\
};
\path[->,font=\scriptsize] (m-1-2) edge node[auto]{} (m-1-3);
\path[->>,font=\scriptsize] (m-2-2) edge node[auto]{} (m-2-3);
\path[right hook->,font=\scriptsize] (m-3-1) edge node[auto]{} (m-3-2);
\path[->,font=\scriptsize] (m-3-2) edge node[auto]{} (m-3-3);
\path[->,font=\scriptsize] (m-1-2) edge node[auto]{$\alpha^*$} (m-2-2);
\path[->,font=\scriptsize] (m-1-3) edge node[auto]{$\alpha^*$} (m-2-3);
\path[->,font=\scriptsize] (m-2-2) edge node[auto]{$0^*-1^*$} (m-3-2);
\path[->,font=\scriptsize] (m-2-3) edge node[auto]{$0^*-1*$} (m-3-3);
\end{tikzpicture}
\]
We denote the morphism $L_X \to J^1$ corresponding to $\beta\in J^1(X)\cong\Hom_\qfh(L_X,J^1)$  by the same letter $\beta$. Putting  $E:= I^0 \times_{J_1,\beta} L_X$, the extension
\[
0\longrightarrow \Z/m\Z \longrightarrow E \longrightarrow L_X \longrightarrow 0
\]
represents $b\in \Ext^1_\qfh (L_X,\Z/m\Z)$. Consider the diagram
\[
\begin{tikzpicture}
\matrix (m) [matrix of math nodes, row sep=2em,
column sep=2em, text height=2.5ex, text depth=1.25ex]
{&  \Hom_\qfh(L_X,E) &   \Hom_\qfh(L_X,L_X)\, \\
&  \Hom_\qfh(L_{\Delta^1},E) &   \Hom_\qfh(L_{\Delta^1},L_X)\,  &(10)\\
\Z/m\Z&  \Hom_\qfh(L_{\Delta^0}, E) &   \Hom_\qfh(L_{\Delta^0}, L_X). \\
};
\path[->,font=\scriptsize] (m-1-2) edge node[auto]{} (m-1-3);
\path[->>,font=\scriptsize] (m-2-2) edge node[auto]{} (m-2-3);
\path[right hook->,font=\scriptsize] (m-3-1) edge node[auto]{} (m-3-2);
\path[->,font=\scriptsize] (m-3-2) edge node[auto]{} (m-3-3);
\path[->,font=\scriptsize] (m-1-2) edge node[auto]{$\alpha^*$} (m-2-2);
\path[->,font=\scriptsize] (m-1-3) edge node[auto]{$\alpha^*$} (m-2-3);
\path[->,font=\scriptsize] (m-2-2) edge node[auto]{$0^*-1^*$} (m-3-2);
\path[->,font=\scriptsize] (m-2-3) edge node[auto]{$0^*-1^*$} (m-3-3);
\end{tikzpicture}
\]
Because diagram (10) maps to diagram (9) via $\beta_*$ and  $\id \in \Hom_\qfh(L_X,L_X)$ maps under $\beta_*$ to $\beta \in \Hom_\qfh (L_X, J^1)$, we can calculate the pairing using diagram (10) after replacing $\beta$ by $\id$. Since $\id$ maps to $\alpha\in \Hom_\qfh(L_{\Delta^1},L_X)$ under $\alpha^*$, we see,
writing the lower part of diagram (10) in the form
\[
\begin{tikzpicture}
\matrix (m) [matrix of math nodes, row sep=2em,
column sep=2em, text height=2.5ex, text depth=1.25ex]
{\Z/m\Z&  E(\Delta^1) &   L_X(\Delta^1)\, \\
\Z/m\Z&  E(\Delta^0) &   L_X(\Delta^0),&\\
};
\path[->>,font=\scriptsize] (m-1-2) edge node[auto]{} (m-1-3);
\path[right hook->,font=\scriptsize] (m-2-1) edge node[auto]{} (m-2-2);
\path[right hook->,font=\scriptsize] (m-1-1) edge node[auto]{} (m-1-2);
\path[->,font=\scriptsize] (m-2-2) edge node[auto]{} (m-2-3);
\path[->,font=\scriptsize] (m-1-2) edge node[left]{$0^*-1^*$} (m-2-2);
\path[->,font=\scriptsize] (m-1-3) edge node[auto]{$0^*-1^*$} (m-2-3);
\path[->,font=\scriptsize] (m-1-1) edge node[left]{$0$} (m-2-1);
\path[->,dashed, font=\scriptsize] (m-1-3) edge node[auto]{$h$} (m-2-2);
\end{tikzpicture} \raisebox{1.2cm}{(11)}
\]
that
\[
\langle a, b\rangle= h(\alpha) \bmod m \in \ker (E(\Delta^0)/m \to   L_X(\Delta^0)/m)=\Z/m\Z,
\]
where $h$ is the unique homomorphism making diagram $(11)$ commutative.
We consider the complex $C_{\bullet}(X)=\Cor(\Delta^{\bullet},X) \otimes A=L_X(\Delta^\bullet)$ with the obvious differentials. By the above considerations,  the homomorphism induced by the pairing of Proposition~\ref{pairing}

\bigskip
$
H^1_\et(X,\Z/m\Z)=H^1_\qfh(X,\Z/m\Z) \longrightarrow
$
\[
\qquad H_1^S(X,\Z/m\Z)^* = \Ext^1_{A} (C_{\bullet}(X), \Z/m\Z)=\Hom_{D(A)} (C_{\bullet}(X), \Z/m\Z[1]),
\]
is given by sending an extension class
 $[\Z/m\Z \hookrightarrow E \twoheadrightarrow L_X]$ to the morphism  $C_\bullet(X)\to \Z/m\Z[1]$ in the derived category of $A$-modules represented by the morphism
\[
C_{\bullet}(X) \to [0 \to E(\Delta^0) \to L_X(\Delta^0) \to 0]
\]
which is given by $\id: L_X(\Delta^0)\to L_X(\Delta^0)$ in degree zero and by $h: L_X(\Delta^1)\to E(\Delta^0)$ in degree one.

The same construction works for any $\qfh$-sheaf of $A$-modules $F$ instead of $L_X$, i.e., setting $C_\bullet(F)=F(\Delta^\bullet)$ and starting from an element
\[
[\Z/m\Z \hookrightarrow E \twoheadrightarrow F] \in \Ext^1_\qfh(F,\Z/m\Z),
\]
we get a map $C_{\bullet}(F) \to \Z/m\Z[1]$ in the derived category of $A$-modules. We thus constructed a homomorphism
\[
\Ext^1_\qfh (F, \Z/m\Z) \longrightarrow \Ext^1(C_{\bullet}(F),\Z/m\Z), \eqno (12)
\]
which for $F=L_X$ and under the canonical identifications coincides with the map
\[
H^1_\et(X,\Z/m\Z) \longrightarrow H_1^S(X,\Z/m\Z)^*
\]
induced by the pairing constructed in Proposition~\ref{pairing}.

\bigskip
Now we compare the map $(12)$ with the map
\[
\alpha_X: \Ext^1_\qfh (F, \Z/m\Z) \longrightarrow \Ext^1_A(C_{\bullet}(F),\Z/m\Z) \eqno (13)
\]
constructed by Suslin-Voevodsky \cite{SV} (cf.\ \cite{Ge-alt} for the case of positive characteristic).
Let  $F_{\bullet}^\sim$ be the complex of $\qfh$-sheaves associated with the complex of presheaves $F_{\bullet}(U)= F(U\times \Delta^{\bullet})$. By \cite{SV}, the inclusion $F \to F_{\bullet}^\sim$ induces an isomorphism
\[
\Ext^1_\qfh(F_{\bullet}^\sim,\Z/m\Z)\stackrel{\sim}{\longrightarrow}\Ext^1_\qfh (F, \Z/m\Z),\eqno (14)
\]
and evaluation at $\Spec(k)$ induces an isomorphism
\[
\Ext^1_\qfh(F_{\bullet}^\sim,\Z/m\Z) \stackrel{\sim}{\longrightarrow} \Ext^1_A(C_{\bullet}(F),\Z/m\Z). \eqno (15)
\]
The map $(13)$ of Suslin-Voevodsky is the composite of the inverse of (14) with (15).

\bigskip\noindent
We construct the inverse of (14).
Let  a class $[ \Z/m\Z \hookrightarrow E \twoheadrightarrow F]\in \Ext^1_\qfh(F,\Z/m\Z) $ be given. As a morphism in the derived category this class is given by the homomorphism
\[
\begin{tikzpicture}
\matrix (m) [matrix of math nodes, row sep=2em,
column sep=2em, text height=2.5ex, text depth=1.25ex]
{0 & 0 & F & 0 \\
0&  E &  F &0 &. \\
};
\path[->,font=\scriptsize] (m-1-1) edge node[auto]{} (m-1-2);
\path[->,font=\scriptsize] (m-1-2) edge node[auto]{} (m-1-3);
\path[->,font=\scriptsize] (m-1-3) edge node[auto]{} (m-1-4);
\path[->,font=\scriptsize] (m-2-1) edge node[auto]{} (m-2-2);
\path[->,font=\scriptsize] (m-1-1) edge node[auto]{} (m-2-1);
\path[->,font=\scriptsize] (m-1-4) edge node[auto]{} (m-2-4);
\path[->,font=\scriptsize] (m-2-2) edge node[auto]{} (m-2-3);
\path[->,font=\scriptsize] (m-2-3) edge node[auto]{} (m-2-4);
\path[->,font=\scriptsize] (m-1-3) edge node[auto]{$\mathit{id}$} (m-2-3);
\path[->, font=\scriptsize] (m-1-2) edge node[auto]{} (m-2-2);
\end{tikzpicture}
\]
We therefore have to construct a homomorphism $F_1 \longrightarrow E$  making the diagram
\[
\begin{tikzpicture}
\matrix (m) [matrix of math nodes, row sep=2em,
column sep=2em, text height=2.5ex, text depth=1.25ex]
{F_2 & F_1 & F_0 & 0 \\
0&  E &  F &0  \\
};
\path[->,font=\scriptsize] (m-1-1) edge node[auto]{} (m-1-2);
\path[->,font=\scriptsize] (m-1-2) edge node[auto]{} (m-1-3);
\path[->,font=\scriptsize] (m-1-3) edge node[auto]{} (m-1-4);
\path[->,font=\scriptsize] (m-2-1) edge node[auto]{} (m-2-2);
\path[->,font=\scriptsize] (m-2-2) edge node[auto]{} (m-2-3);
\path[->,font=\scriptsize] (m-2-3) edge node[auto]{} (m-2-4);
\path[->,font=\scriptsize] (m-1-3) edge node[auto]{$\mathit{id}$} (m-2-3);
\path[->, font=\scriptsize] (m-1-4) edge node[auto]{} (m-2-4);
\path[->, font=\scriptsize] (m-1-2) edge node[auto]{} (m-2-2);
\path[->, font=\scriptsize] (m-1-2) edge node[auto]{} (m-2-2);
\path[->,font=\scriptsize] (m-1-1) edge node[auto]{} (m-2-1);
\end{tikzpicture}
\]
commutative. The construction is a sheafified version of what we did before.
Let $U\in \Sch/k$ be arbitrary. Consider the diagram

\[
\begin{tikzpicture}
\matrix (m) [matrix of math nodes, row sep=2em,
column sep=2em, text height=2.5ex, text depth=1.25ex]
{0 & \Z/m\Z(U) & E(U\times \Delta^2)& F(U\times \Delta^2) & 0 \\
0&  \Z/m\Z(U) & E(U \times \Delta^1 ) & F(U \times \Delta^1) &0  \\
0&  \Z/m\Z(U) & E(U) & F(U) &0 &. \\
};
\path[->,font=\scriptsize] (m-1-1) edge node[auto]{} (m-1-2);
\path[->,font=\scriptsize] (m-1-2) edge node[auto]{} (m-1-3);
\path[->,font=\scriptsize] (m-1-3) edge node[auto]{} (m-1-4);
\path[->,font=\scriptsize] (m-1-4) edge node[auto]{} (m-1-5);
\path[->,font=\scriptsize] (m-2-1) edge node[auto]{} (m-2-2);
\path[->,font=\scriptsize] (m-2-2) edge node[auto]{} (m-2-3);
\path[->,font=\scriptsize] (m-2-3) edge node[auto]{} (m-2-4);
\path[->,font=\scriptsize] (m-2-4) edge node[auto]{} (m-2-5);
\path[->,font=\scriptsize] (m-3-1) edge node[auto]{} (m-3-2);
\path[->,font=\scriptsize] (m-3-2) edge node[auto]{} (m-3-3);
\path[->,font=\scriptsize] (m-3-3) edge node[auto]{} (m-3-4);
\path[->,font=\scriptsize] (m-3-4) edge node[auto]{} (m-3-5);
\path[->,font=\scriptsize] (m-1-4) edge node[auto]{$\delta^0-\delta^1 +\delta^2$} (m-2-4);
\path[->,font=\scriptsize] (m-1-3) edge node[auto]{$\delta^0-\delta^1 +\delta^2$} (m-2-3);
\path[->,font=\scriptsize] (m-1-2) edge node[auto]{$\mathit{id}$} (m-2-2);
\path[->,font=\scriptsize] (m-2-2) edge node[auto]{$0$} (m-3-2);
\path[->,font=\scriptsize] (m-2-4) edge node[auto]{$0^*-1^*$} (m-3-4);
\path[->,font=\scriptsize] (m-2-3) edge node[auto]{$0^*-1^*$} (m-3-3);
\end{tikzpicture}
\]
Let $\alpha_1\in F(U\times \Delta^1)$ be given. By the smooth base change theorem and since\break $H^1_\et(\Delta^1,\Z/m\Z)=0$, we can lift $\alpha_1$ to $E(U \times \Delta^1)$ after replacing $U$ by a sufficiently fine \'{e}tale cover. Applying $0^*-1^*$ to this lift, we get an element in $E(U)$. This gives the homomorphism $F_1 \to E$. Now let $\alpha_2\in F(U \times \Delta^2)$ be arbitrary.  After replacing $U$ by a sufficiently fine \'{e}tale cover, we can lift $\alpha_2$ to $E( U\times \Delta^2)$. Since $(0^*-1^*)(\delta^0-\delta^1+\delta^2)=0$ this shows that $(\delta^0-\delta^1+\delta^2)(\alpha_2)$ maps to zero in $E(U)$.

This describes the inverse isomorphism to (14). Evaluating at $U=\Spec(k)$ gives back our original construction, hence $(12)$ and $(13)$ are the same maps. This finishes the proof.

\vskip1cm
\small

{\sc  Rikkyo University, Department of Mathematics, 3-34-1 Nishi-Ikebukuro, To\-shima-ku,
Tokyo Japan 171-8501}

\textit{E-mail address:} {\tt geisser@rikkyo.ac.jp}

\medskip
{\sc  Universit\"{a}t Heidelberg, Mathematisches Institut, Im Neuenheimer Feld 288, D-69120 Heidelberg, Deutschland}

\textit{E-mail address:} {\tt schmidt@mathi.uni-heidelberg.de}

\end{document}